\documentclass[a4paper,10pt,leqno]{amsart}
        \title[Topological $K$-(co-)homology of classifying spaces \ldots]
       {Topological $K$-(co-)homology of classifying spaces of discrete groups}
       \author{Michael Joachim}
       \author{Wolfgang L\"uck}
              \address{Westf\"alische Wilhelms-Universit\"at M\"unster\\
               Mathematisches Institut\\
               Einsteinstr.~62,
               D-48149 M\"unster, Germany}
       \address{Rheinische Wilhelms-Universit\"at Bonn\\
               Mathematisches Institut\\
               Endenicher Allee 62, 53115 Bonn, Germany}
              \email{joachim@math.uni-muenster.de}
      \urladdr{http://www.math.uni-muenster.de/u/joachim/}
        \email{wolfgang.lueck@him.uni-bonn.de}
      \urladdr{http://www.him.uni-bonn.de/lueck/}
              \date{January  2012}
     \keywords{Classifying spaces, Topological $K$-theory.}
    \subjclass[2010]{55H20, 55N15, 19L47, 57S99}

\usepackage{pdfsync}
\usepackage{hyperref}
\usepackage{calc}
\usepackage{enumerate,amssymb}
\usepackage{color}
\usepackage[all]{xy}
\usepackage{tikz}
\usepackage{pdfcolmk}

\DeclareMathAlphabet\EuR{U}{eur}{m}{n}
\SetMathAlphabet\EuR{bold}{U}{eur}{b}{n}
\newcommand{\version}[1]                    
{\begin{center} last edited on #1\\
last compiled on \today\\
name of texfile: \jobname
\end{center}
}




\newtheorem{theorem}{Theorem}[section]
\newtheorem{lemma}[theorem]{Lemma}

\newtheorem{definition}[theorem]{Definition}
\newtheorem{example}[theorem]{Example}
\newtheorem{remark}[theorem]{Remark}
\newtheorem{corollary}[theorem]{Corollary}

{\catcode`@=11\global\let\c@equation=\c@theorem}

\newcommand{\forget}[1]{}


\newcommand{\curs}{\EuR}

\newcommand{\Sub}{\curs{Sub}}



\newcommand{\N}{{\mathcal N}}


\newcommand{\calfin}{{\mathcal F}\!{\mathcal I}\!{\mathcal N}}
\newcommand{\calmfin}{{\mathcal M}\!{\mathcal F}\!{\mathcal I}\!{\mathcal N}}
\newcommand{\calf}{{\mathcal F}}

\newcommand{\calh}{{\mathcal H}}
\newcommand{\calp}{{\mathcal P}}


\newcommand{\higherlim}[3]{\underleftarrow{\lim}_{#1}^{#2}#3}
\newcommand{\highercolim}[3]{\underrightarrow{\dirlim}_{#1}^{#2}#3}
\newcommand{\invlim}[2]{\higherlim{#1}{}{#2}}
\newcommand{\colim}[2]{\highercolim{#1}{}{#2}}

\newcommand{\pt}{\{\bullet\}}


\newcommand{\IC}{{\mathbb C}}

\newcommand{\IF}{{\mathbb F}}

\newcommand{\IK}{{\mathbb K}}

\newcommand{\IN}{{\mathbb N}}

\newcommand{\IQ}{{\mathbb Q}}
\newcommand{\IR}{{\mathbb R}}

\newcommand{\IZ}{{\mathbb Z}}





\newcommand{\bfE}{\ensuremath{\mathbf{E}}}


\newcommand{\fulldot}{{\circ}}


\newcommand{\cok}{\operatorname{coker}}
\newcommand{\cts}{\operatorname{cts}}

\newcommand{\image}{\operatorname{image}}

\newcommand{\con}{\operatorname{con}}
\newcommand{\cyl}{\operatorname{cyl}}
\newcommand{\dirlim}{\operatorname{colim}}

\newcommand{\ext}{\operatorname{ext}}
\newcommand{\id}{\operatorname{id}}
\newcommand{\im}{\operatorname{im}}

\newcommand{\pr}{\operatorname{pr}}
\newcommand{\res}{\operatorname{res}}


\newcommand{\AI}{{\mathbb I}}


\newcounter{commentcounter}


\newcommand{\xycomsquare}[8]                   
{\xymatrix
{#1 \ar[r]^{#2} \ar[d]^{#4} &
#3 \ar[d]^{#5}  \\
#6\ar[r]^{#7} &
#8
}
}

\newcommand{\xycomsquareminus}[8]                      
{\xymatrix{#1 \ar[r]^-{#2} \ar[d]^-{#4} &
#3 \ar[d]^-{#5}  \\
#6\ar[r]^-{#7} &
#8
}
}




\begin{document}

\maketitle

\typeout{-----------------------  Abstract  ------------------------}
\begin{abstract}
Let $G$ be a discrete group. We give methods to compute for a generalized
(co-)homology theory its values on the Borel construction $EG \times_G X$
of a proper $G$-$CW$-complex $X$  satisfying certain finiteness
conditions. In particular we give  formulas computing the topological $K$-(co)homology
$K_*(BG)$ and $K^*(BG)$ up to finite abelian torsion groups.
They apply  for instance to arithmetic groups,
word hyperbolic groups,  mapping class groups
and discrete cocompact subgroups of almost connected Lie groups.
For finite groups $G$ these formulas are sharp.
The main new tools we use for the $K$-theory calculation are a  Cocompletion Theorem and 
Equivariant Universal Coefficient Theorems which are of independent interest. In the case where
$G$ is a finite group these theorems reduce to well-known results of Greenlees and B\"okstedt.
\end{abstract}

\typeout{-----------------------  Introduction ------------------------}

\setcounter{section}{-1}
\section{Introduction}
\label{sec:Introduction}

One of our main goals in this paper is to compute for a discrete group $G$, a proper $G$-$CW$-complex $X$
and a generalized cohomology theory $\calh^*$  and a generalized homology theory
$\calh_*$ the groups $\calh^*(EG \times_G X)$ and  $\calh_*(EG \times_G X)$.
In particular the case is interesting, where $X$ can be chosen to be non-equivariantly contractible because then
$EG \times_G X$ is a model for the classifying space
$BG$. The main results are Theorem~\ref{the:computing_calh_upper_ast(EG_times_G_X)_for_proper_X}
and Theorem~\ref{the:computing_calh_lower_ast(EG_times_G_X)_for_proper_X}.
In the introduction we will concentrate on the case, where $\calh^*$ and $\calh_*$ are
topological $K$-theory $K^*$ and $K_*$ and on classifying spaces $BG$.

Throughout the paper $H_*(Y;M)$ and $H^*(Y;M)$
denote singular (co-)-homology of $Y$ with coefficients in the abelian group $M$, and we omit
$M$ from the notation in the case $M = \IZ$.
Let  $\IZ\widehat{_p}$ be the ring of $p$-adic integers, which is the inverse limit
of the inverse system of projections $\IZ/p \xleftarrow{\pr_2}\IZ/p^2 \xleftarrow{\pr_3}
\IZ/p^3 \xleftarrow{\pr_4} \ldots$. Denote by
$\IZ/p^{\infty}$ the quotient $\IZ[1/p]/\IZ$ which
is isomorphic to the colimit of the directed system of inclusions
$\IZ/p \xrightarrow{p} \IZ/p^2\xrightarrow{p} \IZ/p^3 \xrightarrow{p}
\ldots$.  The proof of the following result
will be given in Section~\ref{sec:Universal_coefficient_theorems_for_K-theory}.

\begin{theorem}[Topological $K$-theory of classifying spaces]
\label{the:intro}
Let $G$ be a discrete group. Let $X$ be a
finite proper $G$-$CW$-complex.
Suppose for every $k \in \IZ$ that $\widetilde{H}_k(X) = 0$  vanishes.
Given a prime number $p$ and $k \in \IZ$, define the  natural number
\begin{eqnarray*}
r_p^k(G) & := &  \sum_{(g) \in \con_p(G)}  \;\sum_{i \in \IZ}
\dim_{\IQ}\left(H^{k+2i}(BC_G\langle g \rangle;\IQ)\right),
\end{eqnarray*}
where $\con_p(G)$ denotes the set of conjugacy classes of non-trivial elements of
$p$-power order and $C_G\langle g \rangle$ is the centralizer in $G$ of
the cyclic subgroup $\langle g \rangle$ generated by $g$.
Let $\calp(G)$ be the set of primes $p$ which divide the order $H$ of some finite subgroup
$H\subseteq G$. Then:

\begin{enumerate}

\item \label{the:intro:K_upper_ast}
There is an exact sequence
\begin{multline*}
0 \to A \to K^k(G\backslash X) \to K^k(BG)
\to
B \times \prod_{p \in \calp(G)} \left(\IZ\widehat{_p}\right)^{r_p^k(G)} \to C \to 0,
\end{multline*}
where $A$, $B$ and $C$ are finite abelian groups with
\[
A \otimes_{\IZ} \IZ\left[\frac{1}{\calp(G)}\right] =
B \otimes_{\IZ}  \IZ\left[\frac{1}{\calp(G)}\right] =
C \otimes_{\IZ}  \IZ\left[\frac{1}{\calp(G)}\right] =0;
\]

\item \label{the:intro:K_lower_ast}
Dually there is an exact sequence
\[
0 \to C' \to \coprod_{p \in \calp(G)} (\IZ/{p^\infty})^{r_p^{k+1}(G)} \times B'
 \to K_k(BG) \to K_k(G\backslash X)
\to A' \to 0,
\]
where $A'$, $B'$ and $C'$ are finite abelian groups with
\[
A' \otimes_{\IZ} \IZ\left[\frac{1}{\calp(G)}\right] =
B' \otimes_{\IZ}  \IZ\left[\frac{1}{\calp(G)}\right] =
C' \otimes_{\IZ}  \IZ\left[\frac{1}{\calp(G)}\right] =0;
\]

\item \label{the:intro:rationally}
If we invert all primes in $\calp(G)$, then we obtain isomorphisms
\begin{eqnarray*}
K^k(BG) \otimes_{\IZ} \IZ\left[\frac{1}{\calp(G)}\right] & \cong &
K^k(G\backslash X) \otimes_{\IZ} \IZ\left[\frac{1}{\calp(G)}\right]
\times \prod_{p \in \calp(G)} \left(\IQ\widehat{_p}\right)^{r_p^k(\underline{E}G)};
\\
K_k(BG)\otimes_{\IZ} \IZ\left[\frac{1}{\calp(G)}\right] & \cong &
K_k(G\backslash X) \otimes_{\IZ} \IZ\left[\frac{1}{\calp(G)}\right].
\end{eqnarray*}
\end{enumerate}
\end{theorem}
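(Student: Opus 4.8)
The plan is to reduce everything to an equivariant computation and then feed it through the Cocompletion Theorem and the Equivariant Universal Coefficient Theorem advertised in the abstract. Since $\widetilde{H}_k(X) = 0$ for all $k$, the space $X$ is non-equivariantly contractible, so the projection $EG \times_G X \to EG \times_G \pt = BG$ is a homotopy equivalence and we may replace $BG$ by the Borel construction $EG \times_G X$ throughout. The finiteness of $X$ guarantees that $G\backslash X$ is compact and that the relevant equivariant $K$-groups are finitely generated, which is essential for controlling the error terms. First I would invoke Theorem~\ref{the:computing_calh_upper_ast(EG_times_G_X)_for_proper_X} and Theorem~\ref{the:computing_calh_lower_ast(EG_times_G_X)_for_proper_X} with $\calh^* = K^*$ and $\calh_* = K_*$; these express $K^*(EG \times_G X)$ and $K_*(EG \times_G X)$ in terms of the equivariant $K$-theory of $X$ together with a completion, and it is precisely this completion that the Cocompletion Theorem identifies.

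The heart of the argument is the decomposition of the equivariant input according to conjugacy classes of elements of finite order. Using the equivariant Chern character for proper actions, the rationalized equivariant $K$-theory of $X$ splits as a product indexed by the conjugacy classes $(g)$ of elements of finite order, where the summand attached to $(g)$ is governed by the rational cohomology $H^*(C_G\langle g\rangle\backslash X^{\langle g\rangle};\IQ)$ of the centralizer data. The contribution of the identity element is exactly $K^*(G\backslash X)$, while each nontrivial element $g$ of $p$-power order contributes a summand whose rational rank in degree $k$ is $\sum_i \dim_{\IQ} H^{k+2i}(BC_G\langle g\rangle;\IQ)$; summed over $(g)\in\con_p(G)$ this is the number $r_p^k(G)$. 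The key point is that the augmentation-ideal completion appearing in the Borel construction acts as $p$-adic completion on the summand belonging to a cyclic group of $p$-power order, turning a free summand of rank $r_p^k(G)$ into $(\IZ\widehat{_p})^{r_p^k(G)}$ in cohomology. This is where the Cocompletion Theorem does the work, and it is also the main obstacle: one must show that the completion decouples cleanly over the different primes, that only primes in $\calp(G)$ can occur (since only finite subgroups contribute torsion, and their orders have prime factors in $\calp(G)$), and that the discrepancy between the completed equivariant groups and the naive product is captured by finite abelian groups $A,B,C$ whose support lies in $\calp(G)$, i.e. which vanish after inverting $\calp(G)$. This yields statement~\ref{the:intro:K_upper_ast}.

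To pass from the cohomological statement~\ref{the:intro:K_upper_ast} to the homological statement~\ref{the:intro:K_lower_ast}, I would apply the Equivariant Universal Coefficient Theorem, which relates $K_*$ and $K^*$ by short exact sequences involving $\operatorname{Hom}$ and $\operatorname{Ext}$ terms. The $\operatorname{Ext}^1$-term exchanges the $p$-adic factor $\IZ\widehat{_p}$ with its divisible $p$-torsion counterpart $\IZ/p^{\infty}$ and shifts the degree by one, $k \rightsquigarrow k+1$, which explains the appearance of $(\IZ/p^\infty)^{r_p^{k+1}(G)}$ in the homology sequence; the finite error terms $A',B',C'$ again have support in $\calp(G)$. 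Care is needed here because $K$-theory is not a field and the relevant $\lim$ and $\lim^1$ terms must be shown to be finite; this is the technical crux shared with the finite-group results of B\"okstedt and Greenlees, to which the general theorems specialize.

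Finally, statement~\ref{the:intro:rationally} follows formally from~\ref{the:intro:K_upper_ast} and~\ref{the:intro:K_lower_ast} by applying $-\otimes_{\IZ}\IZ[1/\calp(G)]$. Inverting the primes in $\calp(G)$ annihilates the finite groups $A,B,C,A',B',C'$. For cohomology $\IZ\widehat{_p}\otimes_{\IZ}\IZ[1/\calp(G)] = \IQ\widehat{_p}$, since $p\in\calp(G)$, which leaves the clean factor $\prod_{p}(\IQ\widehat{_p})^{r_p^k}$; for homology the group $\IZ/p^\infty$ is $p$-primary and hence dies after inverting $p$, so that $K_k(BG)$ and $K_k(G\backslash X)$ become isomorphic after inverting $\calp(G)$.
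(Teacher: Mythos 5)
Your overall architecture differs from the paper's in a way that matters. The paper does not prove Theorem~\ref{the:intro} through the Cocompletion Theorem~\ref{the:Cocompletion_Theorem} or the Equivariant Universal Coefficient Theorem~\ref{the:Equivariant_Universal_Coefficient_Theorem_for_K-theory} at all: assertion~\ref{the:intro:K_upper_ast} comes from Theorem~\ref{the:computing_calh_upper_ast(EG_times_G_X)_for_proper_X}, whose long exact sequence is built from the arithmetic fracture of the \emph{relative} term $\calh^*(q(X))$ over the primes dividing the isotropy orders (Lemmas~\ref{lem:splitting_the_pro-module_K(BH(n-1)} and~\ref{lem:splitting_calhast(EG_times_G_X)}, proved by pro-triviality arguments in the Atiyah--Hirzebruch spectral sequence), and assertion~\ref{the:intro:K_lower_ast} is derived \emph{independently} in Theorem~\ref{the:computing_calh_lower_ast(EG_times_G_X)_for_proper_X} from the Bockstein sequence of $0\to\IZ\to\IZ\left[\frac{1}{\calp(X)}\right]\to\bigoplus_p\IZ/p^\infty\to 0$; the (non-equivariant) universal coefficient theorem enters only in Lemma~\ref{lem:comparing_r_p_upper_kand_r_p_upper_for_K-theory}, to show that the exponents in the two sequences agree. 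Your proposed mechanism for deducing~\ref{the:intro:K_lower_ast} from~\ref{the:intro:K_upper_ast} --- ``$\ext^1$ exchanges $\IZ\widehat{_p}$ with $\IZ/p^\infty$'' --- is backwards for the algebraic UCT over $\IZ$: what holds is $\ext_{\IZ}(\IZ/p^\infty,\IZ)\cong\IZ\widehat{_p}$, whereas $\hom_{\IZ}(\IZ\widehat{_p},\IZ)=0$ and $\ext_{\IZ}(\IZ\widehat{_p},\IZ)$ is enormous, and the homological UCT in its naive form is valid only for finite complexes, which $BG$ is not. Making your duality precise requires either the continuous/Pontryagin-dual formulation of Remark~\ref{rem:continuous_version_of_the_Cocompletion_Theorem} or the independent homological argument the paper actually gives; you have named this as the technical crux but not supplied it.

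Two further concrete gaps. First, $\widetilde H_k(X)=0$ for all $k$ does not make $X$ contractible, nor $EG\times_G X\to BG$ a homotopy equivalence ($X$ need not be simply connected); the paper needs Lemma~\ref{lem:sing_homology_iso_versus_generalized_(co)homology_iso} (double suspension plus Hurewicz) to convert the homology isomorphism into isomorphisms on $K^*$ and $K_*$. Second, and more seriously, the Chern-character computation (Theorem~\ref{the:numbers_r}) naturally produces the ranks $\sum_{(g)}\sum_i\dim_\IQ H^{k+2i}(C_G\langle g\rangle\backslash X^{\langle g\rangle};\IQ)$ involving the \emph{fixed sets} $X^{\langle g\rangle}$, not $BC_G\langle g\rangle$. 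Converting these into the numbers $r_p^k(G)$ of the statement is exactly where the acyclicity hypothesis is consumed: one needs Smith theory to show that $X^{\langle g\rangle}$ is nonempty and $\IF_p$-acyclic (hence $\IQ$-acyclic), and then a chain-level comparison over $\IQ[C_G\langle g\rangle]$ to identify $H^*(C_G\langle g\rangle\backslash X^{\langle g\rangle};\IQ)$ with $H^*(BC_G\langle g\rangle;\IQ)$; this step also yields the finiteness of $\con_p(G)$ and $\calp(G)$. Your proposal writes down the answer in terms of $BC_G\langle g\rangle$ from the start and never performs this identification, so as written the argument does not establish the stated formula for $r_p^k(G)$.
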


Under the conditions appearing in Theorem~\ref{the:intro} the sets $\con_p(G)$ and $\calp(G)$ are finite
and the dimension of $BC_G\langle g \rangle$ is bounded by the finite dimension of $X$.
Hence the numbers $r_p^k(X)$ are well-defined.

The exact sequences of assertions~\ref{the:intro:K_upper_ast}
and~\ref{the:intro:K_lower_ast} of Theorem~\ref{the:intro} are in fact dual to each other when
working in the category of topological groups. Namely, recall that
skeletal filtration on $BG$ imposes the structure of a pro-discrete
group on $K^*(BG)$, while the $p$-adic integers can be equipped with
the pro-finite topology. With these topologies the exact sequence of
Theorem~\ref{the:intro}~\ref{the:intro:K_lower_ast} is the
Pontryagin dual of the exact sequence of Theorem~\ref{the:intro}~\ref{the:intro:K_upper_ast}
 and vice versa. Moreover both exact sequence of
 Theorem~\ref{the:intro} are also exact sequences in the category of topological groups
 in the sense of~\cite{Yoneda(1960)}.

A model $\underline{E}G$ for the \emph{classifying space for proper
$G$-actions} is a proper $G$-$CW$-complex, whose $H$-fixed point
sets are contractible for all finite subgroups $H \subseteq G$. It
is unique up to $G$-homotopy. It is a good candidate for $X$ in
Theorem~\ref{the:intro}, provided that there exists a finite
$G$-$CW$-complex model for $\underline{E}G$. Examples, for which
this is true, are arithmetic groups in a semisimple connected linear
$\IQ$-algebraic group~\cite{Borel-Serre(1973)},~\cite{Serre(1979)},
mapping class groups (see~\cite{Mislin(2010)}), groups which are
hyperbolic in the sense of Gromov
(see\cite{Meintrup(2000)},~\cite{Meintrup-Schick(2002)}), virtually
poly-cyclic groups, and groups which are cocompact discrete
subgroups of Lie groups with finitely many path
components~\cite[Corollary~4.14]{Abels(1978)}. On the other hand,
for any $CW$-complex $Y$ there exists a group $G$ such that $Y$ and
$G\backslash \underline{E}G$ are homotopy equivalent
(see~\cite{Leary-Nucinkis(2001a)}). More information about these
spaces $\underline{E}G$ can be found for instance
in~\cite{Baum-Connes-Higson(1994)},~\cite{Lueck(2005s)},~\cite[Section~I.6]{Dieck(1987)}.

In order to apply Theorem~\ref{the:intro} one needs to understand
the $CW$-complex $G\backslash X$. This is often possible
using geometric input, in particular in the case $X = \underline{E}G$ for the groups
mentioned above. Notice that $q$-torsion in
$K^k(BG)$ and $K_k(BG)$ for a prime number $q$ which does not belong to
$\calp(G)$ must come from the $q$-torsion in  $K^k(G\backslash X)$ and $K_k(G\backslash X)$.

The rational version of our formula for $K$-cohomology has already been proved using equivariant Chern characters
in~\cite[Theorem~0.1]{Lueck(2007)}, see also~\cite[Theorem 6.3]{Adem(1993b)}.

If $G$ is finite, a model for $\underline{E}G$ is $\pt$ and one gets a complete answer integrally,
see for instance~\cite[Theorem~0.3]{Lueck(2007)}.

We will recall the Completion Theorem~\ref{the:Completion_Theorem}
of~\cite[Theorem~6.5]{Lueck-Oliver(2001b)}) and deduce from it in
Section~\ref{sec:Completion_and_cocompletion_theorems}.

\begin{theorem}[Cocompletion Theorem]
\label{the:Cocompletion_Theorem}
Let $G$ be a discrete group. Let $X$ be a finite proper
$G$-$CW$-complex and let $L$ be a finite dimensional
proper $G$-$CW$-complex whose isotropy subgroups have bounded order.
Fix a $G$-map $f \colon X \to L$ and regard $K^*_G(X)$ as a module over
$\IK_G(L)$. Moreover, let $I=\AI_G(L)$ be the augmentation ideal (see
Definition~\ref{def:augmentation_ideal}).

 Then there is a short exact sequence
\begin{multline*}
\lefteqn{0\to \colim{n \ge 1}{\ext^1_{\IZ}(K^{*+1}_G(X)/I^n\cdot K^{*+1}_G(X), \IZ )}
\to K_*(EG\times_G X)}\\
\to
 \colim{n \ge 1}{\hom_{\IZ}(K^*_G(X)/I^n\cdot K^*_G(X) , \IZ)} \to 0.
\end{multline*}
\end{theorem}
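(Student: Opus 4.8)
The plan is to reduce the statement to the classical non-equivariant universal coefficient sequence for topological $K$-theory applied to finite approximations of the Borel construction, and then to feed in the Completion Theorem~\ref{the:Completion_Theorem} in order to rewrite the resulting inverse system of $K$-cohomology groups as the $I$-adic tower $\{K^*_G(X)/I^n\cdot K^*_G(X)\}_{n\ge 1}$. Concretely, for complex topological $K$-theory there is for every finite $CW$-complex $Y$ a natural short exact sequence
\[
0 \to \ext^1_{\IZ}\bigl(K^{*+1}(Y),\IZ\bigr) \to K_*(Y) \to \hom_{\IZ}\bigl(K^*(Y),\IZ\bigr) \to 0,
\]
which is the Anderson self-duality of the $KU$-spectrum (and is the non-equivariant shadow of the Equivariant Universal Coefficient Theorems used elsewhere in the paper); finiteness of $Y$ guarantees that $K^*(Y)$ is finitely generated so that the dualization is well-behaved, and the $2$-periodicity of $K$-theory accounts for the degree shift by one in the $\ext$-term.

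First I would write $EG\times_G X$ as the filtered colimit $\dirlim_\alpha Y_\alpha$ of its finite $CW$-subcomplexes $Y_\alpha$. Since $K$-homology is a homology theory, hence commutes with directed colimits of spaces, the canonical map $\dirlim_\alpha K_*(Y_\alpha)\to K_*(EG\times_G X)$ is an isomorphism. Applying the universal coefficient sequence to each $Y_\alpha$ and taking the filtered colimit, which is an exact functor, then yields a short exact sequence
\[
0 \to \colim{\alpha}{\ext^1_{\IZ}(K^{*+1}(Y_\alpha),\IZ)} \to K_*(EG\times_G X) \to \colim{\alpha}{\hom_{\IZ}(K^*(Y_\alpha),\IZ)} \to 0.
\]
It therefore remains to identify the two outer colimits with the colimits over the $I$-adic tower appearing in the statement.

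For this identification I would invoke the Completion Theorem~\ref{the:Completion_Theorem}. The essential point is that, under the finiteness hypotheses on $X$ and on the finite-dimensional $L$ with bounded isotropy, it provides not merely an isomorphism of inverse limits but a pro-isomorphism between the inverse system $\{K^*(Y_\alpha)\}$ and the $I$-adic system $\{K^*_G(X)/I^n\cdot K^*_G(X)\}$. Since $\colim{}{\hom_{\IZ}(-,\IZ)}$ and $\colim{}{\ext^1_{\IZ}(-,\IZ)}$, applied to a tower, depend only on its pro-isomorphism class (these functors send a pro-object to an ind-object and a pro-isomorphism to an ind-isomorphism, which is invisible after passing to the colimit), this pro-isomorphism transports the two outer terms above to $\colim{n \ge 1}{\ext^1_{\IZ}(K^{*+1}_G(X)/I^n\cdot K^{*+1}_G(X),\IZ)}$ and $\colim{n \ge 1}{\hom_{\IZ}(K^*_G(X)/I^n\cdot K^*_G(X),\IZ)}$, giving the asserted sequence.

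The main obstacle I expect is precisely establishing and then exploiting the pro-isomorphism of towers at the level needed here, rather than the mere equality of limits that a completion statement naively delivers: one must be sure the finiteness conditions force the quotients $K^*_G(X)/I^n$ to be finitely generated, so that their $\hom_{\IZ}$- and $\ext^1_{\IZ}$-duals behave well, and that the transition maps of the two systems match up to the cofinal shifts controlled by the Completion Theorem. A secondary, more routine point I would verify is the naturality and degreewise behaviour of the universal coefficient sequence under the filtered colimit, together with bookkeeping of the degree shift so that the $\ext$-term lands in degree $*+1$ exactly as in the statement of Theorem~\ref{the:Cocompletion_Theorem}.
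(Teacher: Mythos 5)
Your overall strategy is the paper's: apply the non-equivariant universal coefficient sequence for $K$-theory (Theorem~\ref{the:Universal_coefficient_theorem_for_K-theory}) to finite approximations of $EG\times_G X$, pass to the colimit using that $\dirlim$ is exact, and then use the Completion Theorem to replace the resulting tower of $K$-cohomology groups by the $I$-adic tower, the point being that $\dirlim\,\hom_{\IZ}(-,\IZ)$ and $\dirlim\,\ext^1_{\IZ}(-,\IZ)$ depend only on the pro-isomorphism class of the tower (Lemma~\ref{lem:pro-exactness_and_colimits_hom_and_ext}). However, there is a genuine gap in the middle step. The Completion Theorem~\ref{the:Completion_Theorem} provides a pro-isomorphism of $\{K^*_G(X)/I^n\cdot K^*_G(X)\}$ with the \emph{skeletal} tower $\{K^*((EG\times_G X)^{n-1})\}$, not with the system $\{K^*(Y_\alpha)\}$ indexed by all finite subcomplexes, and the two systems cannot be compared directly: for infinite $G$ the skeleta $(EG\times_G X)^n$ are in general infinite $CW$-complexes, so they do not occur among your $Y_\alpha$, while the universal coefficient sequence you invoke is only available for finite complexes and hence does not apply to the skeleta. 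Thus no single tower in your argument simultaneously carries the UCT and matches the tower appearing in the Completion Theorem, and the claimed identification of the two outer colimits does not go through as written.

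The missing idea is the finite-type replacement: Lemma~\ref{lem:EGamma_times_(Gamma)_X_of_finite_type} shows that $EG\times_G X$ is homotopy equivalent to a $CW$-complex $Y$ of finite type, so that each skeleton $Y^n$ is a finite complex. One then checks that a cellular homotopy equivalence $f\colon Y\to EG\times_G X$ induces a pro-isomorphism $\{K^*((EG\times_G X)^n)\}\to\{K^*(Y^n)\}$ (even though the individual skeletal maps $f^n$ need not be homotopy equivalences), which composed with $\lambda^{X,f}$ places you in a single tower of finite complexes to which both the universal coefficient sequence and the pro-invariance of $\dirlim\,\hom_{\IZ}(-,\IZ)$ and $\dirlim\,\ext^1_{\IZ}(-,\IZ)$ apply, together with the isomorphism $K_*(f)$ on $K$-homology. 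With this insertion your argument becomes the paper's proof.
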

When working in the category of topological abelian groups and
continuous homomorphisms the sequence can be written in the
following more compact form
\[
0\to {\ext^1_{\cts}(K^{*+1}_G(X)\widehat{_I}, \IZ )} \to
K_*(EG\times_G X) \to {\hom_{\cts}(K^*_G(X)\widehat{_I}, \IZ)} \to 0.
\]
Theorem~\ref{the:Cocompletion_Theorem} is closely related to the local
cohomology approach to equivariant $K$-homology of
Greenlees~\cite{Greenlees(1993a)} (see
Remark~\ref{rem:Cocompletion_Theorem_and_the_approach_of_Greenlees}
below).

In Section~\ref{sec:Universal_coefficient_theorems_for_K-theory} we
prove

\begin{theorem}[Equivariant Universal Coefficient Theorem for $K$-theory]
\label{the:Equivariant_Universal_Coefficient_Theorem_for_K-theory}
Let $G$ be a discrete group. Let $X$ be a finite proper $G$-$CW$-complex $X$.

Then there are short exact sequences, natural in $X$,
\begin{eqnarray}
\label{UCT1}
0 \to \ext_\IZ(K_{*-1}^G(X),\IZ) \to K^*_G(X) \to  \hom_\IZ(K_*^G(X),\IZ)\to 0;\\
\label{UCT2}
0 \to \ext_\IZ(K^{*+1}_G(X),\IZ) \to K_*^G(X) \to  \hom_\IZ(K^*_G(X),\IZ)\to 0,
\end{eqnarray}
where the homomorphism on the right hand sides are given by~\eqref{adjoint.before.UCT.1}
and~\eqref{adjoint.before.UCT.2} respectively. The sequence splits unnaturally.
\end{theorem}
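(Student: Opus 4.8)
The plan is to recognize both sequences as universal coefficient sequences for the $KK$-theory of the crossed product $C^*$-algebra $A_X := C_0(X) \rtimes_r G$. Since $X$ is a finite proper $G$-$CW$-complex, the action is proper and cocompact, and I would use the standard descent / Green--Julg-type identifications $K^G_*(X) \cong K_*(A_X)$ and $K^*_G(X) \cong KK_*(A_X,\IC)$, under which the right-hand maps of~\eqref{UCT1} and~\eqref{UCT2} become the adjoints~\eqref{adjoint.before.UCT.1} and~\eqref{adjoint.before.UCT.2} of the Kronecker/index pairing. As a sanity check, for $X = G/H$ with $H$ finite the algebra $A_{G/H}$ is Morita equivalent to $\IC[H]$, so $K_*(A_{G/H}) = R(H)$ and $KK_*(A_{G/H},\IC) = \hom_\IZ(R(H),\IZ)$ concentrated in even degrees, which matches $K^G_*(G/H) = K^*_G(G/H) = R(H)$ and exhibits the pairing as a perfect duality with vanishing $\ext$ because $R(H)$ is free of finite rank.

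The central step is to show that $A_X$ lies in the Rosenberg--Schochet bootstrap class $\N$. I would filter $X$ by its equivariant skeleta $\emptyset = X_{-1} \subseteq X_0 \subseteq \cdots \subseteq X_n = X$. Each successive difference contributes an ideal of $A_X$ that is Morita equivalent to a finite direct sum of algebras of the form $C_0(\IR^k) \otimes \IC[H]$ with $H$ finite; these lie in $\N$ because $\IC[H]$ is finite-dimensional, hence type I, and $\N$ is closed under suspension and Morita equivalence. The skeletal filtration then exhibits $A_X$ as assembled from these subquotients by finitely many extensions, and closure of $\N$ under extensions yields $A_X \in \N$. The same finite filtration shows that $K_*(A_X)$ and $KK_*(A_X,\IC)$ are finitely generated abelian groups, a fact I will need below.

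With $A_X \in \N$, sequence~\eqref{UCT1} is exactly the Rosenberg--Schochet universal coefficient theorem with second variable $\IC$: since $K_*(\IC) = \IZ$ is concentrated in even degree, the coefficient groups reduce to $\IZ$ and the degree shift $K^G_{*-1}(X)$ matches the shift in that theorem. For~\eqref{UCT2} I would invoke the dual (cohomological) universal coefficient theorem, which recovers $K_*(A_X) = KK_*(\IC,A_X)$ from the K-homology $KK_*(A_X,\IC)$; here finite generation of the K-groups is essential, so that the evaluation map and the derived double dual behave correctly and no derived-limit or divisibility obstructions intervene. Naturality in $X$ follows from functoriality of $A_{(-)}$ under $G$-maps together with naturality of the Rosenberg--Schochet sequence in its first variable, and the (unnatural) splitting is inherited from the splitting of that sequence.

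I expect the main obstacle to be the bootstrap-class verification. One must handle possible non-separability when $G$ is uncountable, reducing via Morita equivalence to the separable corners governed by the finitely many finite isotropy groups that actually occur among the cells of $X$, and one must check that the extensions assembling $A_X$ from its subquotients are genuine and remain inside $\N$. A secondary difficulty is~\eqref{UCT2}: the dual direction is more delicate than the standard theorem, and I would either derive it by dualizing~\eqref{UCT1} while tracking the finitely generated abelian groups through $\ext$ and $\hom$, or appeal directly to the cohomological universal coefficient theorem with hypotheses verified by the finiteness established above. As an independent safeguard, once the orbit case $X = G/H$ is in hand, the result can alternatively be propagated to all finite proper $G$-$CW$-complexes by induction over equivariant cells, comparing the long exact sequences of the pairs in $K^*_G$ and in the $\ext$--$\hom$ construction on $K^G_*$ via the five lemma.
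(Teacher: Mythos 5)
Your proposal follows essentially the same route as the paper: identify the equivariant $K$-groups with the $K$-theory and $K$-homology of the crossed product $C_0(X)\rtimes G$, verify membership in the Rosenberg--Schochet bootstrap class $\N$ by cell-by-cell extensions starting from the orbit case (Green's imprimitivity theorem, Morita equivalence with $\IC H$, finite generation), apply the Rosenberg--Schochet universal coefficient theorem for one sequence, and deduce the other from finite generation of the $K$-groups. The only slip is that your two identifications are interchanged --- $K^*_G(X)$ corresponds to $K_*(C_0(X)\rtimes G)$ and $K_*^G(X)$ to $KK_*(C_0(X)\rtimes G,\IC)$, not the other way around --- but this merely permutes which of \eqref{UCT1} and \eqref{UCT2} falls directly out of the Rosenberg--Schochet theorem and does not affect the argument.
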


Theorem~\ref{the:Equivariant_Universal_Coefficient_Theorem_for_K-theory}  reduces for
finite groups to the corresponding results of  B\"okstedt~\cite{Boekstedt(1981)}
as in explained in Remark~\ref{rem:Boekstedt_universal_coefficient_theorem}.





The work was financially supported by Sonderforschungsbereich 878 \emph{Groups,
  Geometry and Actions} in M\"unster,  and the Leibniz-Preis of the second author.

\vspace{6mm}


\typeout{-----------------------  Section 1  ------------------------}

\section{Some preliminaries about pro-modules}
\label{sec:Some_preliminaries_about_pro-modules}

It will be crucial to handle pro-systems and pro-isomorphisms and not
to pass directly to inverse limits. Otherwise we would loose
important information which is for instance needed  in order to pass from $K$-cohomology to
$K$-homology using universal coefficients theorems.
In this section we fix our notation for handling pro-$R$-modules for a
commutative ring $R$, where ring always means associative ring with unit.
For the definitions in full generality see for instance~\cite[Appendix]{Artin-Mazur(1969)}
or~\cite[Section~2]{Atiyah-Segal(1969)}. This exposition agrees with the one
in~\cite[Section~2]{Lueck(2007)} and is repeated for the reader's convenience.

For simplicity, all pro-$R$-modules dealt
with here will be indexed by the positive  integers.  We
write $\{M_n,\alpha_n\}$ or briefly $\{M_n\}$ for the inverse system
\[
 M_0 \xleftarrow{\alpha_1} M_1 \xleftarrow{\alpha_2} M_2 \xleftarrow{\alpha_3}
M_3 \xleftarrow{\alpha_4} \cdots.
\]
and also write
$\alpha_n^m := \alpha_{m+1} \circ \cdots \circ \alpha_{n}\colon G_n \to G_m$
for $n > m$ and put $\alpha^n_n =\id_{G_n}$.  For the purposes here, it will
suffice (and greatly simplify the notation)
to work with ``strict'' pro-homomorphisms
$\{f_n\} \colon \{M_n,\alpha_n\} \to \{N_n,\beta_n\}$, i.e.,
a collection of  homomorphisms $f_n \colon M_n \to N_n$
for $n \ge 1$ such that  $\beta_{n}\circ f_n = f_{n-1}\circ\alpha_{n}$ holds
for each $ n\ge 2$.  Kernels  and cokernels of strict homomorphisms are
defined in the obvious way.

A pro-$R$-module $\{M_n,\alpha_n\}$  will be called
\emph{pro-trivial} if for each $m \ge 1$, there
is some $n\ge m$ such that  $\alpha_n^m = 0$.  A strict homomorphism
$f\colon \{M_n,\alpha_n\} \to \{N_n,\beta_n\}$ is a
\emph{pro-isomorphism}
if and only if $\ker(f)$ and $\cok(f)$ are both
pro-trivial, or, equivalently, for each $m\ge 1$ there is some
$n\ge m$ such that $\im(\beta_n^m) \subseteq \im(f_m)$
and $\ker(f_n) \subseteq \ker(\alpha_n^m)$.
A sequence of strict homomorphisms
\[
\{M_n,\alpha_n\} \xrightarrow{\{f_n\}} \{M_n',\alpha_n'\}
\xrightarrow{g_n} \{M_n'',\alpha_n''\}
\]
will be called \emph{exact} if the sequences of $R$-modules
$M_n \xrightarrow{f_n} N_n \xrightarrow{g_n} M_n''$ is
exact for each $n \ge 1$, and it is called \emph{pro-exact} if
$g_n \circ f_n = 0$  holds for $n \ge 1$ and
the pro-$R$-module  $\{\ker(g_n)/\im(f_n)\bigr\}$ is pro-trivial.

The following results will be needed later.

\begin{lemma} \label{lem:pro-exactness_and_limits}
Let $0 \to \{M_n',\alpha_n'\} \xrightarrow{\{f_n\}} \{M_n,\alpha_n\}
\xrightarrow{\{g_n\}} \{M_n'',\alpha_n''\} \to 0$ be a pro-exact sequence
of
pro-$R$-modules. Then there is a natural exact sequence
\begin{multline*}
0 \to \invlim{n \ge 1}{M_n'} \xrightarrow{\invlim{n \ge 1}{f_n}}
\invlim{n \ge 1}{M_n} \xrightarrow{\invlim{n \ge 1}{g_n}}
\invlim{n \ge 1}{M_n''} \xrightarrow{\delta}
\\
 \higherlim{n \ge 1}{1}{M_n'} \xrightarrow{\higherlim{n \ge 1}{1}{f_n}}
\higherlim{n \ge 1}{1}{M_n} \xrightarrow{\higherlim{n \ge 1}{1}{g_n}}
\higherlim{n \ge 1}{1}{M_n''} \to 0.
\end{multline*}
In particular a pro-isomorphism
$\{f_n\} \colon \{M_n,\alpha_n\} \to \{N_n,\beta_n\}$ induces isomorphisms
\[
\begin{array}{llcl}
\invlim{n \ge 1}{f_n} \colon & \invlim{n \ge 1}{M_n}
& \xrightarrow{\cong} & \invlim{n \ge 1}{N_n};
\\
\higherlim{n \ge 1}{1}{f_n} \colon & \higherlim{n \ge 1}{1}{M_n}
& \xrightarrow{\cong} & \higherlim{n \ge 1}{1}{N_n}.
\end{array}
\]
\end{lemma}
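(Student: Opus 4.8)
The plan is to construct the six-term exact sequence directly from the definitions of $\invlim{}{}$ and $\higherlim{}{1}{}$ via the standard cofinality/snake-lemma machinery, but adapted to the pro-exact (rather than exact) hypothesis. Recall that for an inverse system $\{M_n,\alpha_n\}$ indexed by the positive integers, both $\invlim{n}{M_n}$ and $\higherlim{n}{1}{M_n}$ are computed as the kernel and cokernel of the shift-difference map
\[
1-\alpha\colon \prod_{n\ge 1} M_n \to \prod_{n\ge 1} M_n,
\qquad (x_n)_n \mapsto (x_n - \alpha_{n+1}(x_{n+1}))_n.
\]
First I would observe that a short \emph{exact} sequence $0\to\{M_n'\}\to\{M_n\}\to\{M_n''\}\to 0$ of inverse systems gives a short exact sequence of the product abelian groups, compatible with the three $(1-\alpha)$-maps; applying the snake lemma to this vertical ladder immediately produces the desired six-term sequence with the connecting map $\delta$. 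This handles the genuinely exact case.

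The substance of the lemma is to upgrade from \emph{exact} to \emph{pro-exact}. My strategy here is to reduce the pro-exact case to the exact case by exploiting that $\invlim{}{}$ and $\higherlim{}{1}{}$ are insensitive to pro-isomorphisms, which is exactly the content I want the ``in particular'' clause to give and which I would therefore prove \emph{first} as a self-contained sublemma. Concretely, I would show that if $\{P_n,\gamma_n\}$ is pro-trivial, i.e.\ for each $m$ there is $n\ge m$ with $\gamma_n^m=0$, then $\invlim{n}{P_n}=0$ and $\higherlim{n}{1}{P_n}=0$. The vanishing of $\invlim{}{}$ is immediate: any compatible sequence $(x_n)$ satisfies $x_m=\gamma_n^m(x_n)=0$. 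For the vanishing of $\higherlim{}{1}{}$ one uses that pro-triviality makes the system satisfy the Mittag--Leffler condition with trivial eventual images, so $1-\gamma$ is surjective on the product; I would verify surjectivity by solving the system of equations $x_n-\gamma_{n+1}(x_{n+1})=y_n$ term by term, using that for each index the backward composite eventually kills everything, so the formal solution $x_n=\sum_{k\ge 0}\gamma_{n+\,}^{\,}(\cdots)$ is a \emph{finite} sum.

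With the sublemma in hand, I would deduce the ``in particular'' statement: a pro-isomorphism $\{f_n\}$ has pro-trivial kernel and cokernel, so factoring $f$ through its image and applying the long exact sequence from the exact case twice (once for $0\to\ker f\to M\to\im f\to 0$, once for $0\to\im f\to N\to\cok f\to 0$) forces $\invlim{}{f}$ and $\higherlim{}{1}{f}$ to be isomorphisms, since the flanking $\invlim{}{}$ and $\higherlim{}{1}{}$ terms of $\ker f$ and $\cok f$ all vanish. Finally, for the general pro-exact six-term sequence, the given data is exact on the nose at each level except that $\{\ker(g_n)/\im(f_n)\}$ is merely pro-trivial. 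I would therefore factor the sequence: the maps give an honest exact sequence $0\to\{M_n'\}\to\{M_n\}\to\{\im(g_n)\}\to 0$ and an inclusion $\{\im(g_n)\}\hookrightarrow\{M_n''\}$ whose cokernel and whose defect-from-surjectivity are controlled by the pro-trivial quotient, so that $\{\im(g_n)\}\to\{M_n''\}$ is a pro-isomorphism. Applying the snake-lemma six-term sequence to the genuine short exact sequence and then transporting along this pro-isomorphism via the already-established invariance yields the claimed sequence for the original pro-exact data.

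\emph{The main obstacle} I anticipate is the bookkeeping in the pro-triviality reduction, specifically checking that the structure maps $\alpha_n''$, once restricted to the images and quotients, still assemble into strict pro-homomorphisms compatible with the $(1-\alpha)$-columns, and that the pro-isomorphism $\{\im(g_n)\}\to\{M_n''\}$ really does induce isomorphisms on \emph{both} $\invlim{}{}$ and $\higherlim{}{1}{}$ simultaneously and compatibly with $\delta$. The naturality assertion in the statement adds a further layer: I would need the snake connecting map $\delta$ to commute with morphisms of pro-exact sequences, which follows from naturality of the snake lemma but must be tracked through the pro-isomorphism identifications. None of this is conceptually deep, but it is precisely the place where one must resist ``passing directly to inverse limits,'' as the introduction to the section warns.
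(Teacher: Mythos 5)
Your proposal follows the same route as the paper's proof: the exact case is the standard six-term sequence obtained from the snake lemma applied to the $(1-\alpha)$ ladder on $\prod_{n\ge 1}M_n$ (the paper simply cites Switzer for this), and the pro-exact case is reduced to it by showing that a pro-trivial system $\{P_n,\gamma_n\}$ has vanishing $\invlim{n \ge 1}{P_n}$ and $\higherlim{n \ge 1}{1}{P_n}$ --- which is the only point the paper's proof makes explicit. Your verification of that vanishing (compatible sequences die because the backward composites $\gamma_n^m$ eventually vanish, and $1-\gamma$ is surjective because the formal solution $x_m=\sum_{n\ge m}\gamma_n^m(y_n)$ is a finite sum) is precisely the content behind the paper's ``this follows directly from the standard construction.''

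The one step that does not survive scrutiny as written is your claim that the data yields an \emph{honest} exact sequence $0\to\{M_n'\}\to\{M_n\}\to\{\im(g_n)\}\to 0$, the only defect being concentrated in $\{\ker(g_n)/\im(f_n)\}$. Pro-exactness of the five-term sequence only guarantees that $\{\ker(f_n)\}$, $\{\ker(g_n)/\im(f_n)\}$ and $\{M_n''/\im(g_n)\}$ are each pro-trivial; at any fixed level $n$ the map $f_n$ may fail to be injective and $\ker(g_n)$ may properly contain $\im(f_n)$, so your displayed sequence need not be exact at $\{M_n'\}$ or at $\{M_n\}$. The repair is routine with the tools you already have: interpose the genuinely exact sequences $0\to\{\ker f_n\}\to\{M_n'\}\to\{\im f_n\}\to 0$, $0\to\{\im f_n\}\to\{\ker g_n\}\to\{\ker g_n/\im f_n\}\to 0$, $0\to\{\ker g_n\}\to\{M_n\}\to\{\im g_n\}\to 0$ and $0\to\{\im g_n\}\to\{M_n''\}\to\{M_n''/\im g_n\}\to 0$, kill the limits of the pro-trivial terms using your sublemma, and splice, checking that the resulting composite maps are $\invlim{n \ge 1}{f_n}$ and $\invlim{n \ge 1}{g_n}$ and their $\higherlim{n \ge 1}{1}{}$ analogues. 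With that correction the argument is complete and agrees with the paper's.
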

\begin{proof}
If $0 \to \{M_n',\alpha_n'\} \xrightarrow{\{f_n\}} \{M_n,\alpha_n\}
\xrightarrow{g_n} \{M_n'',\alpha_n''\} \to 0$ is exact, the
construction of the six-term sequence is obvious
(see for instance~\cite[Proposition~7.63 on page~127]{Switzer(1975)}).
Hence it remains to show for a pro-trivial pro-$R$-module
$\{M_n,\alpha_n\}$ that $\invlim{n \ge 1}{M_n}$ and
$\higherlim{n \ge 1}{1}{M_n}$ vanish. This follows directly from the
standard construction for these limits as the kernel and cokernel of
the homomorphism
\[
\prod_{n \ge 1} M_n \to \prod_{n \ge 1} M_n, \quad
(x_n)_{n \ge 1} \mapsto
(x_n - \alpha_{n+1}(x_{n+1}))_{n \ge 1}.
\]
\end{proof}

\begin{lemma} \label{lem:pro-exactness_and_colimits_hom_and_ext}
Let $0 \to \{M_n',\alpha_n'\} \xrightarrow{\{f_n\}} \{M_n,\alpha_n\}
\xrightarrow{\{g_n\}} \{M_n'',\alpha_n''\} \to 0$ be a pro-exact sequence
of pro-$R$-modules. Then there is a natural exact sequence
\begin{multline*}
0
\to \colim{n \ge 1}{\hom_{\IZ}(M_n'',\IZ)}
\to \colim{n \ge 1}{\hom_{\IZ}(M_n,\IZ)}
\\
\to \colim{n \ge 1}{\hom_{\IZ}(M_n',\IZ)}
\to \colim{n \ge 1}{\ext_{\IZ}^1(M_n'',\IZ)}
\\
\to \colim{n \ge 1}{\ext_{\IZ}^1(M_n,\IZ)}
\to \colim{n \ge 1}{\ext_{\IZ}^1(M_n',\IZ)}
\to 0.
\end{multline*}
In particular a pro-isomorphism
$\{f_n\} \colon \{M_n,\alpha_n\} \to \{N_n,\beta_n\}$ induces isomorphisms
\begin{eqnarray*}
\colim{n \ge 1}{\hom_{\IZ}(N_n,\IZ)}
& \xrightarrow{\cong} &
\colim{n \ge 1}{\hom_{\IZ}(M_n,\IZ)};
\\
\colim{n \ge 1}{\ext_{\IZ}^1(N_n,\IZ)}
& \xrightarrow{\cong} &
 \colim{n \ge 1}{\ext_{\IZ}^1(M_n,\IZ)}.
\end{eqnarray*}
\end{lemma}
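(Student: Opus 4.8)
The plan is to imitate the proof of Lemma~\ref{lem:pro-exactness_and_limits}, but now applying the contravariant functors $\hom_{\IZ}(-,\IZ)$ and $\ext^1_{\IZ}(-,\IZ)$ and then the directed colimit $\colim{n \ge 1}{-}$, in place of $\invlim{n \ge 1}{-}$ and $\higherlim{n \ge 1}{1}{-}$. Two standard facts drive the argument. First, since $\IZ$ is hereditary we have $\ext^i_{\IZ}(-,\IZ) = 0$ for $i \ge 2$, so applying $\hom_{\IZ}(-,\IZ)$ to any genuine short exact sequence of abelian groups yields a natural six-term exact sequence relating $\hom_{\IZ}$ and $\ext^1_{\IZ}$. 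Second, a directed colimit of exact sequences of abelian groups is again exact.

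First I would record the key vanishing statement: if $\{M_n,\alpha_n\}$ is pro-trivial, then $\colim{n \ge 1}{\hom_{\IZ}(M_n,\IZ)} = 0$ and $\colim{n \ge 1}{\ext^1_{\IZ}(M_n,\IZ)} = 0$. Applying the contravariant functors turns the structure maps $\alpha_n$ into genuine directed systems, and pro-triviality---for each $m$ some $\alpha_n^m = 0$---forces the transition map $\hom_{\IZ}(M_m,\IZ) \to \hom_{\IZ}(M_n,\IZ)$ (and likewise for $\ext^1_{\IZ}$) induced by $\alpha_n^m$ to vanish; hence every class in the colimit already dies at a later stage. Next, for a sequence of pro-modules that is exact at each level $n$, applying the six-term sequence levelwise and using naturality of the connecting homomorphism organizes these into a directed system of exact sequences, whose colimit is the asserted six-term sequence in this special case.

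To treat a general pro-exact sequence $0 \to \{M_n'\} \xrightarrow{\{f_n\}} \{M_n\} \xrightarrow{\{g_n\}} \{M_n''\} \to 0$, I would factor it through the sub-pro-modules $\{\im(f_n)\}$, $\{\ker(g_n)\}$ and $\{\im(g_n)\}$; strictness of $\{f_n\}$ and $\{g_n\}$ ensures that the $\alpha_n$ restrict to these, so they are genuine pro-modules. This produces four level-exact sequences: $0 \to \{\ker(f_n)\} \to \{M_n'\} \to \{\im(f_n)\} \to 0$, $0 \to \{\im(f_n)\} \to \{\ker(g_n)\} \to \{\ker(g_n)/\im(f_n)\} \to 0$, $0 \to \{\ker(g_n)\} \to \{M_n\} \to \{\im(g_n)\} \to 0$, and $0 \to \{\im(g_n)\} \to \{M_n''\} \to \{\cok(g_n)\} \to 0$. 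By the definition of pro-exactness the pro-modules $\{\ker(f_n)\}$, $\{\ker(g_n)/\im(f_n)\}$ and $\{\cok(g_n)\}$ are pro-trivial, so the vanishing statement makes the first, second and fourth sequences identify $\colim{n \ge 1}{\hom_{\IZ}(-,\IZ)}$ and $\colim{n \ge 1}{\ext^1_{\IZ}(-,\IZ)}$ of $\{\im(f_n)\}$ with those of $\{M_n'\}$, of $\{\ker(g_n)\}$ with those of $\{\im(f_n)\}$, and of $\{\im(g_n)\}$ with those of $\{M_n''\}$. Feeding these identifications into the six-term sequence of the third sequence gives precisely the desired exact sequence, and naturality is inherited from that of each construction; the ``in particular'' statement is the special case obtained by factoring a pro-isomorphism, whose kernel and cokernel are pro-trivial, through its image. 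The one point demanding care---and the main obstacle---is keeping the contravariance straight while checking that the image and kernel filtrations really are sub-pro-modules; once this is arranged the argument is purely formal.
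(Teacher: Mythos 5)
Your argument is correct and follows essentially the same route as the paper, which simply declares the proof ``analogous'' to that of Lemma~\ref{lem:pro-exactness_and_limits}: establish the six-term $\hom$--$\ext^1$ sequence for levelwise exact sequences, prove the vanishing of $\colim{n\ge 1}{\hom_{\IZ}(M_n,\IZ)}$ and $\colim{n\ge 1}{\ext^1_{\IZ}(M_n,\IZ)}$ on pro-trivial systems, and use exactness of filtered colimits. Your explicit reduction from pro-exact to levelwise exact via the kernel/image factorization is exactly the step the paper leaves implicit, so nothing is missing.
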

\begin{proof} The proof is analogous to the one of the previous
Lemma~\ref{lem:pro-exactness_and_limits} using the fact that
$\colim{n \ge 1}{}$ is an exact functor.
\end{proof}


\typeout{-----------------------  Section 2  ------------------------}

\section{Completion and cocompletion theorems}
\label{sec:Completion_and_cocompletion_theorems}

Let $G$ be a discrete group. Denote by $K^*_G$ equivariant topological
$K$-theory. This is a multiplicative
$G$-cohomology theory for proper $G$-$CW$-complexes which comes with
various extra structures such as induction, restriction and inflation. It is
defined in terms of classifying spaces for $G$-vector bundles over
proper $G$-$CW$-complexes (see~\cite[Theorem~2.7 and Section~3]{Lueck-Oliver(2001b)}).

Recall that a $G$-$CW$-complex $X$ is proper if and only if its
isotropy groups are finite~\cite[Theorem~1.23 on
page~18]{Lueck(1989)} and is finite if and only if $X$ is cocompact,
i.e. $G\backslash X$ is compact. If one considers only finite proper
$G$-$CW$-complexes, then $K^*_G(X)$ has other descriptions which are
all equivalent. There is a construction due to
Phillips~\cite{Phillips(1989)} in terms of infinite dimensional
vector bundles. In L\"uck and
Oliver~\cite[Theorem~3.2]{Lueck-Oliver(2001a)} it is shown that it
suffices to use finite dimensional $G$-vector bundles and that one
can give a definition in terms of the Grothendieck group $\IK_G(X)$
of the monoid of isomorphism classes of $G$-vector bundles over $X$.
In case where $G$ is the trivial group, we just write $\IK(X)$. One
can also define for a finite proper $G$-$CW$-complex $X$ the
equivariant topological $K$-theory as the topological $K$-theory
$K_*(C_0(X) \rtimes G)$ of the crossed product $C^*$-algebra $C_0(X)
\rtimes G$ (see~\cite[Theorem~6.7 on page~96]{Phillips(1989)}) ,
while equivariant topological $K$-homology of $X$ (by the dual of
the Green-Julg theorem~\cite[Theorem~20.2.7~(b)]{Blackadar(1998)})
also can be defined as the equivariant $KK$-group
$KK^G_*(C_0(X),\IC)$. Here $C_0(X)$ denotes the $C^*$-algebra of
continuous function on $X$ vanishing at infinity.

For any proper $G$-$CW$-complex $X$ the is a natural ring homomorphism
\[
\gamma_G(X) \colon \IK_G(X) \to K_G^0(X)
\]
which allows to regard $K_G^p(X)$ as a $\IK_G(X)$-module in the sequel.

\begin{definition}[Augmentation ideal]
\label{def:augmentation_ideal}
The augmentation ideal $\AI_G(Y)\subseteq \IK_G(Y)$ is given by the set of
elements in $\IK_G(Y)$ represented by virtual $G$-vector bundles of
dimension zero on all components of $Y$.

If $G$  is trivial we just write $\AI(Y)$.
\end{definition}

 We have the following easy
but crucial lemma (cf.~Lemma 4.2 in~\cite{Lueck-Oliver(2001a)}).

\begin{lemma} \label{lem:In_is_0_for_dim_le_n-1}
Let $Z$ be a $CW$-complex of dimension $n-1$.
Then the $n$-fold product of elements in $\AI(Z)\subseteq \IK(Z)$ is zero.
\end{lemma}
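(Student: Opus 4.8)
The plan is to show that the augmentation ideal $\AI(Z)\subseteq \IK(Z)$ is topologically nilpotent with respect to the skeletal filtration, and to exploit the fact that multiplication in $K$-theory respects this filtration in a controlled way. The key structural input is that for a reduced class in $\AI(Z)$ coming from a virtual bundle of dimension zero on each component, its restriction to the $0$-skeleton vanishes, so it lies in the kernel of $\IK(Z) \to \IK(Z^{(0)})$; I want to promote this to the statement that an $n$-fold product lies in the kernel of restriction to the $(n-1)$-skeleton, which for an $(n-1)$-dimensional complex is the whole of $\IK(Z)$.

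First I would set up the filtration of $\reduced{\IK}(Z) = \AI(Z)$ by the subgroups $F^j = \ker\left(\IK(Z) \to \IK(Z^{(j-1)})\right)$, where $Z^{(j-1)}$ is the $(j-1)$-skeleton; thus $F^1 = \AI(Z)$ since a dimension-zero virtual bundle restricts trivially on points. The heart of the argument is the multiplicativity estimate $F^{j} \cdot F^{k} \subseteq F^{j+k}$. To see this, I would use that $F^{j}$ consists of classes pulled back from $K(Z/Z^{(j-1)})$ (relative $K$-theory), and the external product of a class on $Z/Z^{(j-1)}$ with one on $Z/Z^{(k-1)}$ factors through the smash product $(Z/Z^{(j-1)}) \wedge (Z/Z^{(k-1)})$; composing with the diagonal and using that the diagonal sends $Z^{(j+k-2)}$ into the union $Z^{(j-1)} \times Z + Z \times Z^{(k-1)}$ (which is collapsed in the smash product) gives that the product dies on $Z^{(j+k-2)}$, i.e. lies in $F^{j+k-1}$. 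Iterating, the $n$-fold product of elements of $F^1 = \AI(Z)$ lands in $F^{n}$, hence in $\ker\left(\IK(Z) \to \IK(Z^{(n-1)})\right) = \ker\left(\IK(Z) \to \IK(Z)\right) = 0$, since $\dim Z = n-1$ forces $Z^{(n-1)} = Z$.

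I expect the main obstacle to be making the filtration-multiplicativity estimate rigorous at the level of the Grothendieck group $\IK(Z)$ rather than the representable theory $K^0(Z)$. The cited companion Lemma~4.2 in~\cite{Lueck-Oliver(2001a)} already records the analogous fact, so the cleanest route is to pass along the natural ring homomorphism $\gamma(Z) \colon \IK(Z) \to K^0(Z)$, prove the nilpotence statement in $K^0(Z)$ where the skeletal-filtration and smash-product machinery is standard (for instance via the Atiyah–Hirzebruch spectral sequence, where $\AI(Z)$ lies in filtration $\ge 1$ and products raise filtration additively so an $n$-fold product has filtration $\ge n > \dim Z$ and thus vanishes), and then check that $\gamma(Z)$ is injective on the relevant classes, or better, argue directly that a virtual bundle whose image in $K^0(Z)$ is zero already represents zero in $\IK(Z)$ for finite complexes. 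Care is needed with components and base-points: the reduction to the reduced theory and the smash-product argument must be performed component-by-component, but since $Z$ is a finite complex this is harmless and the filtration argument applies on each path component separately.
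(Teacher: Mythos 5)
The paper contains no proof of this lemma at all: it is quoted from \cite[Lemma~4.2]{Lueck-Oliver(2001a)}, and the argument there is precisely the skeletal filtration argument you outline, so your route is the intended one. However, two points in your write-up need repair. First, there is an off-by-one in your justification of the multiplicativity estimate. A product cell $e^a\times e^b$ of $Z\times Z$ lies outside $Z^{(j-1)}\times Z\cup Z\times Z^{(k-1)}$ only when $a\ge j$ and $b\ge k$, hence only in total dimension $\ge j+k$; so the correct containment is $(Z\times Z)^{(j+k-1)}\subseteq Z^{(j-1)}\times Z\cup Z\times Z^{(k-1)}$, and a cellular approximation of the diagonal therefore shows that the product dies on $Z^{(j+k-1)}$, i.e.\ $F^{j}\cdot F^{k}\subseteq F^{j+k}$. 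As literally written you only derive $F^{j}\cdot F^{k}\subseteq F^{j+k-1}$, which is vacuous when iterated from $F^{1}$ (since $1+1-1=1$) and is inconsistent with the estimate you announce at the start and then use to conclude. The headline estimate is the right one; the displayed justification is not.

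Second, the lemma assumes only that $Z$ is finite-\emph{dimensional}, not finite, and in the application $Z=(EG\times_G X)^{n-1}$ is in general an infinite complex (e.g.\ a skeleton of $BG$ for infinite $G$). Both of your proposed resolutions of the $\IK(Z)$ versus $K^0(Z)$ issue --- passing to relative/representable $K$-theory of $Z/Z^{(j-1)}$, and invoking injectivity of $\gamma(Z)$ ``for finite complexes'' --- implicitly use compactness, so as stated they do not cover the generality of the hypothesis. This is fixable: for a finite-dimensional $CW$-complex the comparison map $\IK(Z)\to K^0(Z)$ is still an isomorphism, because $BU(m)\to BU$ is highly connected and hence bundles of rank large compared with $\dim Z$ are already in the stable range; alternatively one can run the filtration argument directly with genuine vector bundles. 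With these two corrections your argument is complete and agrees with the cited source.
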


Now fix a finite proper $G$-$CW$-complex $X$, and a map $f\colon X\to L$
to a finite dimensional proper $G$-$CW$-complex $L$ whose isotropy
subgroups have bounded order. We obtain a ring homomorphism
$f^* \colon \IK_G(L) \to \IK_G(X)$ by the pullback construction.
Hence we can regard $K^*_G(X)$ as a module over the ring
$\IK_G(L)$. Put $I = \AI_G(L)$. For any natural number
$n\ge 1$,   consider the composite
\begin{multline*}
I^n\cdot K^*_G(X) \subseteq K^*_G(X)
\xrightarrow{\pr^*} K_G^*(EG \times X)
\xrightarrow{\cong} K^*(EG \times_G X)\\
\xrightarrow{K^*(i_{n-1})} K^*((EG \times_G X)^{n-1}),
\end{multline*}
where $\pr\colon  EG \times X \to X$ is the projection,
$i_{n-1} \colon (EG \times_G X)^{n-1} \to EG
\times_G X$
is the inclusion of the $(n-1)$-skeleton  and the
isomorphism $K_G^*(EG \times X)
\xrightarrow{\cong} K^*(EG \times_G X)$ comes from dividing
out the (free proper) $G$-action~\cite[Proposition~3.3]{Lueck-Oliver(2001b)}.
This composite is trivial, since the image is contained in the ideal
which is generated by the set $\AI((EG \times_G X)^{n-1})^n$,
and the latter is trivial, since
$\AI((EG \times_G X)^{n-1})^n=0$
 by Lemma~\ref{lem:In_is_0_for_dim_le_n-1}.
The composites therefore define a pro-homomorphism
\begin{eqnarray}
\lambda^{X,f}\colon  \{K^*_G(X)/I^n\cdot K^*_G(X)\}
& \to & \{K^*((EG \times_G X)^{n-1})\},
\label{pro-homomorphism_lambda(X,f)}
\end{eqnarray}
where the structure maps on the left side are given by the obvious
projections  and on the right side are induced by the various
inclusions of the skeletons. The following theorem is taken from
L\"uck-Oliver~\cite[Theorem6.5]{Lueck-Oliver(2001b)}),~\cite[Theorem~4.3]{Lueck-Oliver(2001a)}).
\begin{theorem}[Completion Theorem]
\label{the:Completion_Theorem}
Let $G$ be a discrete group. Let $X$ be a finite proper
$G$-$CW$-complex and let $L$ be a finite dimensional
proper $G$-$CW$-complex whose isotropy subgroups have bounded order.
Fix a $G$-map $f\colon X\to L$ and regard $K^*_G(X)$ as a module over
$\IK_G(L)$. Moreover, let $I=\AI_G(L)$ be the augmentation ideal.

Then
\[
\lambda^{X,f}\colon \{K^*_G(X)/I^n\cdot K^*_G(X)\} \to
 \{K^*((EG \times_G X)^{n-1})\}
\]
is a pro-isomorphism of pro-$\IZ$-modules.

The inverse system $\{K^*_G(X)/I^n\cdot K^*_G(X)\}$ satisfies the
Mittag-Leffler condition. In particular
\[
\higherlim{n \ge 1}{1}{K^{*-1 }(EG \times_G X)^n)}  =  0,
\]
and $\lambda^{X,f}$ and the various inclusions
$i_n\colon (EG \times_G X)^n \to EG \times_G X$
induce isomorphisms
\[
K^*_{G}(X)\widehat{_I} \xrightarrow{\cong} K^*(EG \times_{G} X)
\xrightarrow{\cong} \invlim{n \ge 1}{K^*((EG \times_{G} X)^n)},
\]
where $K^*_{G}(X)\widehat{_I} =
\invlim{n \ge 1}{K^*_{G}(X)/I^n\cdot K^*_{G}(X)}$
is the $I$-adic completion of $K^*_{G}(X)$.
\end{theorem}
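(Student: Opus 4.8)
The plan is to separate the statement into its substantial input, the pro-isomorphism assertion for $\lambda^{X,f}$, and its formal consequences, namely the Mittag--Leffler condition, the vanishing of $\higherlim{n\ge1}{1}{}$, and the resulting chain of isomorphisms. The latter I would derive purely from the pro-module lemmas of Section~\ref{sec:Some_preliminaries_about_pro-modules} together with the Milnor sequence, whereas the former is the genuinely geometric statement recorded in L\"uck--Oliver.

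For the pro-isomorphism itself I would argue by induction over the finitely many equivariant cells of $X$, reducing to the Atiyah--Segal completion theorem for finite groups. Both pro-systems appearing in $\lambda^{X,f}$ are functorial in $X$ (relative to $L$) and $\lambda^{X,f}$ is a natural transformation between them. The target $\{K^*((EG\times_G X)^{n-1})\}$ sends an equivariant cofibre sequence to a pro-exact sequence, since $K^*$ is a cohomology theory and passage to skeleta is compatible up to cofinal reindexing; the source $\{K^*_G(X)/I^n\cdot K^*_G(X)\}$ sends it to a pro-exact sequence by an Artin--Rees type argument for the $I$-adic filtration. A five lemma argument in the pro-category then reduces the claim to a single orbit $X=G/H$ with $H$ finite. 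There $K^*_G(G/H)$ is the complex representation ring $R(H)$ and $EG\times_G(G/H)$ is a model for $BH$, so $\lambda^{X,f}$ becomes, up to pro-isomorphism, the comparison $\{R(H)/J^n\}\to\{K^*((BH)^{n-1})\}$, where $J$ is the image of $I=\AI_G(L)$ in $R(H)$ under restriction along $f$.

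For the orbit case two points remain, and the main obstacle is concentrated here. First I would compare the $J$-adic topology on $R(H)$ with the one defined by the intrinsic augmentation ideal $I_H=\kernel(R(H)\to\IZ)$; the hypothesis that the isotropy of $L$ has bounded order supplies a uniform bound that makes the two filtrations cofinal, so that they define pro-isomorphic towers. This is the only step that genuinely uses the bounded-order assumption and does not follow from a direct appeal to Atiyah--Segal; moreover, since $\IK_G(L)$ need not be Noetherian for infinite $G$, even the pro-exactness used in the cellular induction relies on this geometric control rather than on formal commutative algebra. Once the topologies are identified, the assertion for the orbit is exactly the classical Atiyah--Segal completion theorem for $H$, namely that $\{R(H)/I_H^n\}\to\{K^*((BH)^{n-1})\}$ is a pro-isomorphism, which completes this half.

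Granting the pro-isomorphism, the remaining assertions are formal. The source tower $\{K^*_G(X)/I^n\cdot K^*_G(X)\}$ has surjective structure maps, namely the canonical projections, and therefore trivially satisfies the Mittag--Leffler condition; in particular its $\higherlim{n\ge1}{1}{}$ vanishes. By Lemma~\ref{lem:pro-exactness_and_limits} a pro-isomorphism induces isomorphisms on inverse limits and on $\higherlim{n\ge1}{1}{}$, so $\higherlim{n\ge1}{1}{K^{*-1}((EG\times_G X)^n)}=0$ follows (reindexing the skeleta by $n-1$ is cofinal and changes nothing). Finally, $K^*_G(X)\widehat{_I}=\invlim{n\ge1}{K^*_G(X)/I^n\cdot K^*_G(X)}$ holds by definition, and applying $\invlim{n\ge1}{}$ to the pro-isomorphism identifies this with $\invlim{n\ge1}{K^*((EG\times_G X)^n)}$ via Lemma~\ref{lem:pro-exactness_and_limits}; the Milnor $\higherlim{}{1}{}$-sequence for $K^*$ on the skeletal filtration of $EG\times_G X$, combined with the vanishing of $\higherlim{n\ge1}{1}{}$ just established, identifies this limit in turn with $K^*(EG\times_G X)$. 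Composing these isomorphisms yields the displayed chain $K^*_G(X)\widehat{_I}\xrightarrow{\cong}K^*(EG\times_G X)\xrightarrow{\cong}\invlim{n\ge1}{K^*((EG\times_G X)^n)}$.
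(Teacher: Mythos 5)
First, a point of reference: the paper does not prove Theorem~\ref{the:Completion_Theorem} at all. It constructs the pro-homomorphism $\lambda^{X,f}$ (the only input being Lemma~\ref{lem:In_is_0_for_dim_le_n-1}, which shows the composite into $K^*((EG\times_G X)^{n-1})$ kills $I^n\cdot K^*_G(X)$) and then imports the theorem verbatim from L\"uck--Oliver \cite{Lueck-Oliver(2001b)}, \cite{Lueck-Oliver(2001a)}. So your attempt is measured against that external proof, not against anything in this text. The second half of your argument is correct and complete: the source tower has surjective structure maps, hence is Mittag--Leffler; pro-isomorphisms preserve $\invlim{n\ge 1}{}$ and $\higherlim{n\ge 1}{1}{}$ by Lemma~\ref{lem:pro-exactness_and_limits}; and the Milnor sequence then yields the displayed chain of isomorphisms. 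This is exactly how these consequences are meant to be extracted.

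The first half, however, asserts rather than proves the actual content of the theorem, and the two places you yourself flag as "the main obstacle" are precisely where the proof lives. (a) The five-lemma reduction to orbits needs the tower $\{K^*_G(-)/I^n\cdot K^*_G(-)\}$ to carry equivariant cofibre sequences to pro-exact sequences; you appeal to "an Artin--Rees type argument," correctly note that $\IK_G(L)$ need not be Noetherian, and then supply no substitute, so the reduction is unjustified as written. (b) In the orbit case you claim that bounded isotropy of $L$ makes the $J$-adic and $I_H$-adic filtrations on $R(H)$ cofinal. The inclusion $J\subseteq I_H$ is trivial, but the essential inclusion $I_H^N\subseteq J\cdot R(H)$ is \emph{not} a formal consequence of bounded order: it requires the restriction $\IK_G(L)\to R(H)$ to have large image, i.e., the existence of enough $G$-vector bundles over the finite-dimensional (generally non-cocompact) complex $L$ realizing prescribed $H$-representations at an orbit. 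This bundle-existence theorem of L\"uck--Oliver is the geometric heart of the result and is exactly what fails for Lie groups of positive dimension (cf.\ Remark~\ref{rem:Completion_Theorem_for_finite_Gamma_and_L_is_point}); without it the $J$-adic topology on $R(H)$ could be strictly coarser than the $I_H$-adic one and the reduction to the Atiyah--Segal completion theorem \cite{Atiyah-Segal(1969)} collapses. In short, your architecture matches the cited proof, but the two critical inputs are named and not established, so the proposal does not yet constitute a proof of the pro-isomorphism.
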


\begin{remark} \label{rem:Completion_Theorem_for_finite_Gamma_and_L_is_point} \em
In the case where $G$ is finite and $L$ is a point,  Theorem~\ref{the:Completion_Theorem} above
coincides with the Atiyah-Segal completion theorem for finite groups
(see~\cite[Theorem~2.1]{Atiyah-Segal(1969)}). The
classical Atiyah-Segal completion theorem is stated for compact Lie groups.
However, the theorem above does not hold if $G$ is replaced by a
Lie group of positive dimension (see~\cite[Section~5]{Lueck-Oliver(2001a)}).
\em
\end{remark}

We now pass to $K$-homology.
\begin{lemma} \label{lem:EGamma_times_(Gamma)_X_of_finite_type}
If $X$ is a finite proper $G$-$CW$-complex, then
$EG \times_{G} X$ is homotopy equivalent to a $CW$-complex of finite type.
\end{lemma}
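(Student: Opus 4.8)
The plan is to build a finite-type model for $EG \times_G X$ by induction over the equivariant skeleta of $X$, reducing everything to the elementary observation that $EG/H \simeq BH$ is of finite type when $H$ is a finite group. The starting point is that a finite proper $G$-$CW$-complex $X$ is built from finitely many equivariant cells $G/H_\alpha \times D^{n_\alpha}$ with each isotropy group $H_\alpha$ finite, so there is a finite filtration $X_{-1} = \emptyset \subseteq X_0 \subseteq \cdots \subseteq X_d = X$ by skeleta in which each $X_n$ is obtained from $X_{n-1}$ by a $G$-pushout
\[
\xymatrix{
\coprod_i G/H_i \times S^{n-1} \ar[r] \ar[d] & X_{n-1} \ar[d]
\\
\coprod_i G/H_i \times D^n \ar[r] & X_n.
}
\]

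First I would record the behaviour of the Borel construction on such data. The functor $EG \times_G (-)$ preserves pushouts of $G$-spaces, since taking the product with the fixed space $EG$ and then the quotient by $G$ are both colimit-preserving. Because $G$ acts trivially on the disc and sphere factors, one has natural homeomorphisms $EG \times_G (G/H \times D^n) \cong (EG/H) \times D^n$ and likewise for $S^{n-1}$, using $EG \times_G (G/H) \cong EG/H$. Applying $EG \times_G (-)$ to the pushout above therefore exhibits $EG \times_G X_n$ as an ordinary pushout of $EG \times_G X_{n-1}$ with $\coprod_i (EG/H_i) \times D^n$ along $\coprod_i (EG/H_i) \times S^{n-1}$.

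Second I would treat the building blocks. For a finite group $H$ the space $EG/H$ is a model for $BH$, and the standard bar construction gives a $CW$-model for $BH$ with only finitely many cells in each dimension, since $H$ is finite; hence $BH$, and therefore each $(EG/H_i) \times D^n$ and $(EG/H_i) \times S^{n-1}$, is homotopy equivalent to a $CW$-complex of finite type, a finite product of finite-type complexes being again of finite type. Alternatively one can invoke Wall's finiteness criterion, noting that $\pi_1(BH) = H$ is finitely presented and that the universal cover $EH$ is contractible. I would then use the closure property that the homotopy pushout of $CW$-complexes of finite type along cellular maps is again of finite type, because the double mapping cylinder has, in each dimension, only the finitely many cells contributed by the three finite-type inputs together with a single copy of the interval.

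Finally I would run the induction: $EG \times_G X_{-1} = \emptyset$ is trivially of finite type, and if $EG \times_G X_{n-1}$ is of finite type then so is $EG \times_G X_n$, being a homotopy pushout of finite-type complexes. After the finitely many steps $n = 0, 1, \dots, d$ this yields that $EG \times_G X = EG \times_G X_d$ is of finite type. Note that the argument never involves $BG$ itself, only the spaces $BH$ for the finite isotropy groups $H$, which is exactly why properness of $X$ is essential. The main obstacle is bookkeeping rather than conceptual: to apply the closure property one must replace the spaces $EG/H_i$ and the attaching maps by honest finite-type $CW$-models and cellular maps in a way that is compatible through the induction and preserves the homotopy type of each pushout. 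This is handled by cellular approximation together with the invariance of homotopy pushouts under homotopy equivalences of the defining diagram, so that at each stage the genuine pushout computing $EG \times_G X_n$ may be replaced, up to homotopy, by a homotopy pushout of finite-type complexes.
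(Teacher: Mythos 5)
Your proof is correct and follows essentially the same route as the paper's: induction over the (equivariant) cell structure of $X$, applying $EG \times_G (-)$ to the attaching pushouts, identifying $EG \times_G (G/H \times D^n)$ with a space homotopy equivalent to $BH$ for finite $H$, and using closure of finite-type complexes under such pushouts. The only cosmetic difference is that you attach all $n$-cells of a given dimension at once, whereas the paper attaches one equivariant cell at a time via a subinduction; your remarks on the cellular-approximation bookkeeping are if anything more careful than the paper's.
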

\begin{proof}
We use induction over the dimension and
subinduction over the number of equivariant cells
of top dimension in $X$. The induction beginning $X = \emptyset$ is trivial.
In the induction step
we write $X$ as a pushout of a diagram
$G/H \times D^n \xleftarrow{i} G/H \times S^{n-1} \rightarrow Y$, where
$i$ is the inclusion, $Y$ a $G$-$CW$-subcomplex of $X$ and $n = \dim(X)$.
We obtain a pushout of $CW$-complexes
\[
\xycomsquareminus{EG\times_{G}
G/H \times S^{n-1}}{}{EG\times_{G} Y}
{\id_{EG} \times_{G} i}{}
{EG\times_{G}  G/H \times D^n}{}{EG\times_{G} X}
\]
with $\id_{EG} \times_{G} i$ a cofibration. Hence
$EG\times_{G} X$ has the
 homotopy type of a $CW$-complex of finite type if
$EG\times_{G}  G/H \times S^{n-1}$,
$EG\times_{G} Y$ and
$EG\times_{G}  G/H \times D^n$ have this
property. This is true for the first two
by the induction hypothesis and for the third one
since it is homotopy equivalent to $BH$.
\end{proof}

Now we can give the proof of the Cocompletion Theorem~\ref{the:Cocompletion_Theorem}

\begin{proof}[Proof of Theorem~\ref{the:Cocompletion_Theorem}]
Because of Lemma~\ref{lem:EGamma_times_(Gamma)_X_of_finite_type} we can choose
a $CW$-complex $Y$ of finite type and a cellular homotopy equivalence
$f\colon Y \to EG\times_{G} X$.
Let $f^n\colon Y^n \to (EG\times_{G} X)^n$
be the map induced on the $n$-skeletons. Notice that $f^n$ is not
necessarily a homotopy equivalence and $K^*(f^n)$ is not necessarily an isomorphism.
Nevertheless, one easily checks that we obtain a pro-isomorphism of
pro-$\IZ$-modules
\[
\{K^*(f^n)\} \colon \{K^*((EG \times_{G} X)^n)\} \to \{K^*(Y^n)\}.
\]
Thus we obtain from the Completion Theorem~\ref{the:Completion_Theorem}
a pro-isomorphism of pro-$\IZ$-modules
\[
\{K^*(f^n)\} \circ \lambda^{X,f} \colon \{K^*_{G}(X)/I^n\cdot K^*_{G}(X)\}
\to \{K^*(Y^n)\}.
\]
>From the $K$-homology version of the universal coefficient theorem for
topological $K$-theory~\ref{the:Universal_coefficient_theorem_for_K-theory}
for finite $CW$-complexes  and the fact that
$\colim{n \ge 1}$ is an exact functor, we get the exact sequence
\begin{multline*}
0 \to \colim{n \ge 1 }{\ext^1_{\IZ}(K^{*+1}(Y^{n}),\IZ )} \to K_*(Y)
\\
\to
\colim{n \ge 1}{\hom_{\IZ}(K^*(Y^{n}),\IZ)} \to 0.
\end{multline*}
The map $f$ and the pro-isomorphism $\{K^*(f^n)\} \circ \lambda^{X,f}$ induce
isomorphisms (see Lemma~\ref{lem:pro-exactness_and_colimits_hom_and_ext})
\begin{eqnarray*}
K_*(f) \colon K_*(Y) & \xrightarrow{\cong} &K_*(EG \times_{G} X);
\\
\colim{n \ge 1 }{\ext^1_{\IZ}(K^*_{G}(X)/I^n\cdot K^*_{G}(X),\IZ )}
& \xrightarrow{\cong} &
\colim{n \ge 1 }{\ext^1_{\IZ}(K^{*+1}(Y^{n}),\IZ )};
\\
\colim{n \ge 1}{\hom_{\IZ}(K^*_{G}(X)/I^n\cdot K^*_{G}(X),\IZ)}
& \xrightarrow{\cong} &
\colim{n \ge 1}{\hom_{\IZ}(K^*(Y^{n}),\IZ)}.
\end{eqnarray*}
 Combining these isomorphisms with the exact sequence above proves the Cocompletion
Theorem~\ref{the:Cocompletion_Theorem}.
\end{proof}

\begin{remark} \label{rem:continuous_version_of_the_Cocompletion_Theorem}\em The
  Cocompletion Theorem~\ref{the:Cocompletion_Theorem} can be formulated elegantly within the category of abelian topological groups and continuous homomorphisms. 
  If we equip the completion $K^*_G(X)\widehat{_I}$
  with the $I$-adic topology and $\IZ$ with the discrete topology then the set
  of continuous homomorphisms $\hom_{\cts}(K^*_G(X)\widehat{_I},\IZ)$ is
  isomorphic to $\colim{n \ge 1}{\hom_{\IZ}(K^*_{G}(X)/I^n\cdot
    K^*_{G}(X),\IZ)}$. On the other hand, although the category of topological
  abelian groups is not exact one can introduce a notion of exact sequences (in
  the sense of \cite[Section 1.1]{Yoneda(1960)}) and correspondingly a notion of
  a group of isomorphisms classes of extensions (see \cite[Corollary on page
  537]{Yoneda(1960)}). In the case at hand we get that the group of isomorphisms
  classes of extensions $\ext_{\cts}(K^*_G(X)\widehat{_I},\IZ)$, the continuous ext-group in the sense of~\cite{Yoneda(1960)}, is isomorphic to
  $\colim{n \ge 1 }{\ext^1_{\IZ}(K^*_{G}(X)/I^n\cdot K^*_{G}(X),\IZ )}$. With
  these identifications the exact sequence of the Cocompletion
  Theorem~\ref{the:Cocompletion_Theorem} reads
  \[
  0\to {\ext^1_{\cts}(K^{*+1}_G(X)\widehat{_I}, \IZ )} \to K_*(EG\times_G X) \to
  {\hom_{\cts}(K^*_G(X)\widehat{_I}, \IZ)} \to 0.
  \]
\end{remark}

\begin{remark} \label{rem:Cocompletion_Theorem_and_the_approach_of_Greenlees}\em
The Cocompletion Theorem~\ref{the:Cocompletion_Theorem} is closely
related to the local cohomology approach to equivariant $K$-homology
due to Greenlees~\cite{Greenlees(1993a)}. If $G$ is a finite group
it follows from~\cite[(4.2) and~(5.1)]{Greenlees(1993a)} that there
is a short exact sequence
\begin{equation}
\label{local.cohomology}
 0 \to H_I^1(K_{\fulldot}^G(X))_{*+1} \to K_{*}(EG \times_G X) \to H^0_I(K_{\fulldot}^G(X))_*\to 0,\
\end{equation}
where $H_I^k(M_\fulldot)$ denotes the $k$-th local cohomology of the
graded $R(G)$-module $M_\fulldot$ with respect to the augmentation
ideal $I\subset R(G)$. A precise definition of the local cohomology
groups occuring in (\ref{local.cohomology}) can be found in
\cite[Section~2]{Greenlees(1993a)}. By a Theorem of Grothendieck in
\cite{Grothendieck(1957)} (quoted in \cite{Greenlees(1993a)} as
Theorem 2.5 (ii)) one has
\[H^n_I (K_{\fulldot}^G(X))_* \cong \colim{n \ge 1}{\ext^n_{R(G)}(R(G)/I^n,K_{*}^G(X))}.
\]
Using exact sequence of the Equivariant Universal Coefficient
Theorem for $K$-homology as stated in
Remark~\ref{rem:Boekstedt_universal_coefficient_theorem}, the
adjunction \[\hom_{R(G)}(M, \hom_{R(G)}(C,N)) \cong \hom_{R(G)}(M
\otimes_{R(G)} C,N))\] for $R(G)$-modules $M,C,N$ with $C$ being
finitely generated, and the $R(G)$-module isomorphism
$\ext^{i}_{R(G)}(M,R(G)) \cong \ext^{i}_{\IZ}(M,\IZ)$ (emphasized in
Remark~\ref{rem:Boekstedt_universal_coefficient_theorem}) one can
see that the exact sequence of the Cocompletion Theorem~\ref{the:Cocompletion_Theorem}
is exact if and only if~\eqref{local.cohomology} is. In particular the Cocompletion Theorem
yields an alternative proof for the exactness of~\eqref{local.cohomology}.
\end{remark}


\typeout{-----------------------  Section 3  ------------------------}

\section{Borel cohomology}
\label{sec:Borel_cohomology}

Let $\calh^*$ be  a (generalized) cohomology theory with
values in the category of
$\IZ$-modules which satisfies the \emph{disjoint union axiom}
for arbitrary index sets, i.e., for any family $\{X_i \mid i \in I\}$  the map
\[
\prod_{i \in I} \calh^k(j_i)\colon \calh^k(\coprod_{i \in I} X_i)
\xrightarrow{\cong} \prod_{i \in I} \calh^k(X_i)
\]
is an isomorphism, where $j_i \colon X_i \to \coprod_{i \in I} X_i$
is the canonical inclusion.
Any such theory $\calh^*$ is given by an $\Omega$-spectrum $\bfE$ and, vice versa,
any cohomology theory given by an $\Omega$-spectrum satisfies the disjoint union axiom.
Given a $CW$-complex $X$, let $\widetilde{\calh}^k(X)$ be the cokernel of the map
$\calh^k(\pt) \to \calh^k(X)$ induced by the projection $X \to \pt$.
Our main example for $\calh^*$ will be topological $K$-theory $K^*$.
If $M$ is an abelian group, we define the cohomology theory $\calh^*(-;M)$ by
the $\Omega$-spectrum which is the fibrant replacement of the
smash product of the spectrum associated with
$\calh^*$ with the Moore spectrum associated to $M$.
If $M$ is a ring $R$, then $\calh^*(-;R)$ takes values in the category of $R$-modules.

\begin{lemma} \label{lem:Hast(X;R)_is_0_implies_calhast(X)_is_0}
Let $X$ be a $CW$-complex such that its reduced singular cohomology
$\widetilde{H}^k(X;\calh^l(\pt))$ with
coefficients in the abelian group $\calh^l(\pt)$
vanishes for all $k \ge 0$ and $l\in \IZ$. Then

\begin{enumerate}

\item \label{lem:Hast(X;R)_is_0_implies_calhast(X)_is_0:restriction_from_Xnto_X(n-1)}
The inclusion $X^{n-1} \to X^n$ of the
$(n-1)$-skeleton into the $n$-skeleton induces the zero-map
$\widetilde{\calh}^k(X^n) \to \widetilde{\calh}^k(X^{n-1})$
for all $k \in \IZ$ and $n \ge 2$.
The pro-$\IZ$-module $\{\widetilde{\calh}^k(X^n)\}$ is pro-trivial;

\item \label{lem:Hast(X;R)_is_0_implies_calhast(X)_is_0:calhk(X)_is_0}
We have $\widetilde{\calh}^k(X) = 0$ for all $k \in \IZ$.

\end{enumerate}
\end{lemma}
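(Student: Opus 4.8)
The plan is to run the Atiyah--Hirzebruch spectral sequence (AHSS) on each finite skeleton $X^n$ and then feed the resulting pro-triviality into the Milnor $\lim^1$-sequence. Note first that the hypothesis $\widetilde{H}^0(X;M)=0$ for $M=\calh^0(\pt)$ forces $X$ (assumed nonempty) to be connected, so every skeleton $X^n$ with $n\ge 1$ is connected as well.

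For assertion~(i) I would fix $n\ge 2$ and consider the reduced AHSS of the finite-dimensional complex $X^n$,
\[
E_2^{p,q} = \widetilde{H}^p\bigl(X^n;\calh^q(\pt)\bigr) \Longrightarrow \widetilde{\calh}^{p+q}(X^n).
\]
It converges because only the columns $0\le p\le n$ occur and each total degree contains but finitely many entries, so no lower bound on $\calh^*(\pt)$ is needed. The inclusion $X^n\hookrightarrow X$ induces isomorphisms $\widetilde{H}^p(X;M)\xrightarrow{\cong}\widetilde{H}^p(X^n;M)$ for $p\le n-1$; together with connectivity (which kills the $p=0$ column) and the hypothesis this gives $E_2^{p,q}=0$ for all $0\le p\le n-1$ and all $q\in\IZ$. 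Hence the reduced $E_2$-page is concentrated in the single column $p=n$, the spectral sequence collapses, and there is no extension problem.

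The payoff is the behaviour of the skeletal filtration $F^p\widetilde{\calh}^k(X^n)=\ker\bigl(\widetilde{\calh}^k(X^n)\to\widetilde{\calh}^k(X^{p-1})\bigr)$, whose associated graded is $E_\infty^{p,k-p}$. Concentration in column $p=n$ forces $F^n\widetilde{\calh}^k(X^n)=\widetilde{\calh}^k(X^n)$, i.e.\ every class restricts to zero on $X^{n-1}$; this is exactly the assertion that $\widetilde{\calh}^k(X^n)\to\widetilde{\calh}^k(X^{n-1})$ vanishes for $n\ge 2$. Since each structure map of the inverse system $\{\widetilde{\calh}^k(X^n)\}$ with source index $\ge 2$ is zero, every composite $\alpha_n^m$ running through an index $\ge 2$ vanishes, so the pro-$\IZ$-module $\{\widetilde{\calh}^k(X^n)\}$ is pro-trivial.

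For assertion~(ii) I would invoke the Milnor exact sequence
\[
0 \to \higherlim{n \ge 1}{1}{\widetilde{\calh}^{k-1}(X^n)} \to \widetilde{\calh}^k(X) \to \invlim{n \ge 1}{\widetilde{\calh}^k(X^n)} \to 0,
\]
available since $\calh^*$ is represented by an $\Omega$-spectrum. As $\{\widetilde{\calh}^{k}(X^n)\}$ and $\{\widetilde{\calh}^{k-1}(X^n)\}$ are pro-trivial by~(i), Lemma~\ref{lem:pro-exactness_and_limits} makes both outer terms vanish, whence $\widetilde{\calh}^k(X)=0$. The main obstacle is the argument for~(i): one must verify convergence and collapse of the AHSS for the possibly non-connective theory $\calh^*$ on the finite-dimensional $X^n$, and---most delicately---translate ``$E_\infty$ concentrated in the top column'' into the vanishing of the restriction map through the correct identification of the skeletal filtration. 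The remaining points (connectivity of $X$, the comparison $\widetilde{H}^p(X^n)\cong\widetilde{H}^p(X)$ for $p\le n-1$, and the passage to pro-triviality) are routine.
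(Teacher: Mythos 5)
Your proof is correct and takes essentially the same route as the paper's: for (i) both arguments run the Atiyah--Hirzebruch spectral sequence on the finite-dimensional skeleton $X^n$, observe that the hypothesis concentrates the $E_2$-page in the single column $p=n$, and read off from the skeletal filtration that restriction to $X^{n-1}$ vanishes; for (ii) both feed the resulting pro-triviality into the Milnor $\lim^1$-sequence via Lemma~\ref{lem:pro-exactness_and_limits}. Your version is if anything slightly more explicit about the comparison $\widetilde{H}^p(X;M)\cong\widetilde{H}^p(X^n;M)$ for $p\le n-1$ and about the identification of the filtration, which the paper leaves implicit.
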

\begin{proof}~\ref{lem:Hast(X;R)_is_0_implies_calhast(X)_is_0:restriction_from_Xnto_X(n-1)}
Since $X^n$ is finite dimensional, the reduced Atiyah-Hirzebruch
spectral cohomology sequence converges
to $\widetilde{\calh}^{k+l}(X^n)$. It has as
$E_2$-term $E_2^{k,l}(X^n) = \widetilde{H}^k(X^n;\calh^l(\pt))$. Since
$\widetilde{H}^k(X;\calh^l(\pt)) = 0$ for all $k$, we have $E_2^{k,l}(X^n) = 0$ for $k \not= n$.
This implies $E_{\infty}^{k,l}(X^n) = 0$ for $k \not= n$.
We have the descending filtration $F^{k,m-k} \calh^m(X^n)$ of $\calh^m(X^n)$ such that
\[
F^{k,l} \calh^m(X^n)/F^{k+1,l-1} \calh^m(X^n) \cong E_{\infty}^{k,l}(X^n).
\]
Hence $F^{k,l} \calh^m(X^n) = 0$ for $k \ge n$ and $F^{k,l} \calh^m(X^n) = \calh^m(X^n)$ for $k < n$.
Since the map $\calh^m(X^n) \to \calh^m(X^{n-1})$ respects this filtration,
it must be trivial.
\\[1mm]~\ref{lem:Hast(X;R)_is_0_implies_calhast(X)_is_0:calhk(X)_is_0}
Recall Milnor's exact sequence~\cite[Theorem 1.3 in XIII.1 on page 605]{Whitehead(1978)}
\[
0 \to \higherlim{n \to \infty}{1} {\widetilde{\calh}^{k-1}(X^n)} \to
\widetilde{\calh}^k(X) \to \invlim{n \to \infty}{\widetilde{\calh}^k(X^n)} \to 0.
\]
Since  $\{\widetilde{\calh}^k(X^n)\}$ is pro-trivial for all $k \in \IZ$, we
conclude from Lemma~\ref{lem:pro-exactness_and_limits}
\begin{eqnarray*}
\higherlim{n \to \infty}{1} {\widetilde{\calh}^{k-1}(X^n)} & = & 0;
\\
\invlim{n \to \infty}{\widetilde{\calh}^k(X^n)} & = & 0.
\end{eqnarray*}
This finishes the proof of Lemma~\ref{lem:Hast(X;R)_is_0_implies_calhast(X)_is_0}.
\end{proof}

\begin{lemma} \label{lem:commuting_calh_and_R}
Let $Y$ be a finite $CW$-complex
Let $R$ be a commutative associative ring
which is flat over $\IZ$.
Then the canonical $R$-map
\[
\calh^k(Y) \otimes_{\IZ} R \xrightarrow{\cong} \calh^k(Y;R)
\]
is an isomorphism.
\end{lemma}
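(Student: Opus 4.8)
The plan is to identify both sides as cohomology theories on finite $CW$-complexes and to compare them via a natural transformation that is an isomorphism on a point; a standard cellular induction then upgrades this to an isomorphism on all finite $CW$-complexes.

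Write $F^k(-) := \calh^k(-) \otimes_{\IZ} R$ and $G^k(-) := \calh^k(-;R)$. Since $R$ is flat over $\IZ$, the functor $- \otimes_{\IZ} R$ is exact, so applying it to the long exact cohomology sequence of a pair of finite $CW$-complexes again yields an exact sequence; hence $F^*$ is a cohomology theory on finite $CW$-complexes, with connecting homomorphisms $\delta_{\calh} \otimes_{\IZ} \id_R$. The theory $G^*$ is represented by the $\Omega$-spectrum $\bfE \wedge M(R)$, where $M(R)$ is the Moore spectrum of $R$, and it takes values in $R$-modules. The unit map $\IS \to M(R)$ induces a map of spectra $\bfE \to \bfE \wedge M(R)$ and hence a natural transformation $\iota \colon \calh^*(-) \to G^*$ commuting with connecting homomorphisms. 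Using the $R$-module structure on $G^*$, I extend $\iota$ $R$-linearly to the canonical natural transformation
\[
\tau \colon F^k(Y) = \calh^k(Y) \otimes_{\IZ} R \to G^k(Y) = \calh^k(Y;R), \qquad x \otimes r \mapsto r \cdot \iota(x).
\]
Because the $R$-action on $G^*$ is natural and its connecting homomorphisms are $R$-linear, $\tau$ is again a transformation of cohomology theories compatible with the connecting maps.

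First I would check that $\tau$ is an isomorphism on a point. Here $G^k(\pt) = \pi_{-k}(\bfE \wedge M(R))$, and the universal coefficient sequence for the homotopy of a smash product with a Moore spectrum gives a short exact sequence
\[
0 \to \pi_{-k}(\bfE) \otimes_{\IZ} R \to \pi_{-k}(\bfE \wedge M(R)) \to \operatorname{Tor}^{\IZ}_1\bigl(\pi_{-k-1}(\bfE), R\bigr) \to 0,
\]
in which the left-hand map is exactly $\tau$ evaluated at $\pt$. Since $R$ is flat over $\IZ$ the $\operatorname{Tor}$-term vanishes, so $\tau_{\pt}$ is an isomorphism. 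Applying the suspension isomorphisms of $F^*$ and $G^*$, it follows that $\tau$ is an isomorphism on every sphere $S^n$, and hence, by the cofiber sequence axiom and additivity for finite coproducts, on every finite wedge of spheres.

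Finally I would run the cellular induction. For a finite $CW$-complex $Y$ I argue by induction on the number of cells, equivalently up the skeletal filtration $Y^{0} \subseteq Y^{1} \subseteq \cdots \subseteq Y$. Each quotient $Y^{n}/Y^{n-1}$ is a finite wedge of spheres, on which $\tau$ is an isomorphism, and $\tau$ maps the long exact cohomology sequence of the pair $(Y^{n}, Y^{n-1})$ for $F^*$ to that for $G^*$; the five lemma then propagates the isomorphism from $Y^{n-1}$ and $Y^{n}/Y^{n-1}$ to $Y^{n}$. Since $Y$ is finite this terminates and gives the claim. The essential input, and the only place flatness is used, is twofold: it makes $- \otimes_{\IZ} R$ exact, so that $F^*$ is a cohomology theory at all, and it kills the $\operatorname{Tor}$-term above, so that $\tau$ is an isomorphism on coefficients. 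The one point requiring genuine care is the bookkeeping that identifies the canonical map $\tau$ with the left-hand edge of the universal coefficient sequence and verifies its compatibility with the connecting homomorphisms; once that is in place the comparison is formal.
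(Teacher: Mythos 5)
Your proof is correct and follows essentially the same route as the paper: both sides are compared via the canonical natural transformation of cohomology theories, which is checked to be an isomorphism on a point (flatness killing the $\operatorname{Tor}$-term) and then propagated to all finite $CW$-complexes by a standard Mayer--Vietoris/cellular-induction argument. Your version merely spells out the details that the paper leaves implicit.
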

\begin{proof}
Since $R$ is flat over $\IZ$,
the map  $\calh^k(Y) \otimes_{\IZ} R \xrightarrow{\cong} \calh^k(Y;R)$
is a transformation of homology theories. It is bijective for
$Y = \pt$. Hence by a Mayer-Vietoris argument it is bijective for every finite $CW$-complex $Y$.
\end{proof}

\begin{lemma} \label{lem:splitting_calhast(q)}
Let $X$ be a finite  proper $G$-$CW$-complex.
Let $\calp(X)$ be the set of primes $p$ which divide the order of some
isotropy group of $X$.  Let $\IZ \subseteq \IZ\left[\frac{1}{\calp(X)}\right]\subseteq \IQ$
be the ring obtained from $\IZ$ by inverting the elements in $\calp(X)$.
Let $q(X)\colon EG \times_G X \to G\backslash X$ be the projection. Then there is for $k \in \IZ$
a $R$-map, natural in $X$,
\[
r^k(X) \colon \calh^k(EG \times_G X)\otimes_{\IZ} \IZ\left[\frac{1}{\calp(X)}\right]
 \to \calh^k(G\backslash X)\otimes_{\IZ} \IZ\left[\frac{1}{\calp(X)}\right]
\]
such that $r^k(X)\circ \calh^k(q(X))\otimes_{\IZ} \id \colon
\calh^k(G\backslash X) ) \otimes_{\IZ}  \IZ\left[\frac{1}{\calp(X)}\right]\to
\calh^k(G\backslash X)) \otimes_{\IZ}  \IZ\left[\frac{1}{\calp(X)}\right] $
is an isomorphism.
\end{lemma}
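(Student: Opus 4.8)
The plan is to prove something slightly stronger than what is asked, namely that after inverting the primes in $\calp(X)$ the map $\calh^k(q(X))\otimes_\IZ\id$ is already an \emph{isomorphism}, and then to take $r^k(X)$ to be its inverse. This automatically yields a natural $\IZ[1/\calp(X)]$-linear map with $r^k(X)\circ(\calh^k(q(X))\otimes\id)=\id$, which is the stated retraction property. The entire argument rests on a single transfer computation for one orbit, propagated over the cells of $X$ by Mayer--Vietoris and the five lemma.

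First I would treat the single orbit $X=G/H$ with $H$ finite. Here $EG\times_G(G/H)=EG/H$ is a model for $BH$, and the quotient map $c\colon EG\to EG/H$ is a model for the universal covering $EH\to BH$, an $|H|$-sheeted covering whose total space $EG$ is contractible, so $\calh^k(EG)\cong\calh^k(\pt)$. The stable transfer of this finite covering (available because $\calh^*$ is represented by a spectrum) gives $\operatorname{tr}_c\colon\calh^k(EG)\to\calh^k(EG/H)$ with $\operatorname{tr}_c\circ c^*=|H|\cdot\id$ on $\calh^k(BH)$. Since $c^*$ carries the reduced group $\widetilde{\calh}^k(BH)$ into $\widetilde{\calh}^k(EG)=0$, this relation forces $|H|\cdot\id=0$ on $\widetilde{\calh}^k(BH)$; hence $\widetilde{\calh}^k(BH)$ is annihilated by $|H|$ and $\widetilde{\calh}^k(BH)\otimes_\IZ\IZ[1/|H|]=0$. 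As every prime dividing $|H|$ lies in $\calp(G/H)$, this shows that $\calh^k(q(G/H))\otimes\id$ is an isomorphism after inverting $\calp(G/H)$.

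Next I would run an induction over the equivariant cells of $X$, equivalently over the skeleta of $G\backslash X$. Writing $X$ as a pushout of $G/H\times D^n\leftarrow G/H\times S^{n-1}\to Y$ as in the proof of Lemma~\ref{lem:EGamma_times_(Gamma)_X_of_finite_type}, the projection $q$ induces a map between the Mayer--Vietoris sequences of $G\backslash X$ and of $EG\times_G X$; tensoring with the localization $\IZ[1/\calp(X)]$, which is flat over $\IZ$, keeps these sequences exact. The corners require only the single orbit $G/H$ (from the previous step, since $D^n$ is contractible) and the orbit-times-sphere $G/H\times S^{n-1}$, for which $EG\times_G(G/H\times S^{n-1})\cong BH\times S^{n-1}$ lies over $S^{n-1}=G\backslash(G/H\times S^{n-1})$; applying the same transfer to the covering $EG\times S^{n-1}\to BH\times S^{n-1}$ shows the relative reduced groups are killed by $|H|$, so $\calh^k(q)\otimes\id$ is an isomorphism there as well. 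By the induction hypothesis and the five lemma, $\calh^k(q(X))\otimes\id$ is an isomorphism for $X$, and I set $r^k(X):=(\calh^k(q(X))\otimes\id)^{-1}$. Naturality in $X$ follows because $q$ is natural and, for a $G$-map $X\to X'$, one has $\calp(X)\subseteq\calp(X')$, so the change-of-rings $\IZ[1/\calp(X)]\to\IZ[1/\calp(X')]$ makes the relevant square commute; being the inverse of a natural isomorphism, $r^k$ is natural.

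The main obstacle is the transfer input together with the bookkeeping it forces. One must establish the identity $\operatorname{tr}_c\circ c^*=|H|\cdot\id$ and its relative version over $S^{n-1}$ for the chosen covering in the generalized theory $\calh^*$; this is exactly where the hypothesis on the isotropy orders enters, since it guarantees that the normalizing factors $|H|$ become units precisely after inverting $\calp(X)$. The subtler point is naturality, because the coefficient ring $\IZ[1/\calp(X)]$ itself varies with $X$ and the comparison for a map $X\to X'$ involves two different rings; once the transfer relations are in place, however, the Mayer--Vietoris/five-lemma induction and the change-of-rings are routine.
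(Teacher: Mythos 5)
Your plan rests on proving something strictly stronger than the lemma, namely that $\calh^k(q(X))\otimes_{\IZ}\id$ is already an isomorphism after inverting $\calp(X)$. That stronger statement is false, and the point of failure is exactly the transfer identity you invoke. For a finite covering $c\colon EG \to EG/H = BH$ the composite $\operatorname{tr}_c\circ c^*$ is induced by a stable self-map of $BH_+$ which induces multiplication by $|H|$ on ordinary homology, but it is \emph{not} equal to $|H|\cdot\id$ on a generalized cohomology theory. For topological $K$-theory it is multiplication by the class of the regular representation $[\IC H]\in R(H)$, which annihilates the completed augmentation ideal $\widetilde{K}^0(BH)$ rather than acting as $|H|$. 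Concretely, for $H=\IZ/p$ one has $\widetilde{K}^0(B\IZ/p)\cong\left(\IZ\widehat{_p}\right)^{p-1}$ by the Atiyah--Segal completion theorem, so
\[
\widetilde{K}^0(B\IZ/p)\otimes_{\IZ}\IZ\left[\tfrac{1}{p}\right]\cong\left(\IQ\widehat{_p}\right)^{p-1}\neq 0,
\]
and hence $K^0(q(G/H))\otimes_{\IZ}\id$ is injective but not surjective even after inverting $p$. This is consistent with the paper: the whole content of Theorem~\ref{the:intro}~\ref{the:intro:rationally} and of Theorem~\ref{the:computing_calh_upper_ast(EG_times_G_X)_for_proper_X}~\ref{the:computing_calh_upper_ast(EG_times_G_X)_for_proper_X:splitting} is that after inverting $\calp(X)$ the cohomology of the Borel construction splits off $\calh^k(G\backslash X)$ \emph{plus a nontrivial $p$-adic complement}; if your stronger claim held, that complement would vanish and the lemma would have been stated as an isomorphism.

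The actual proof constructs the retraction without ever inverting $\calh^k(q(X))\otimes\id$: one compares $\calh^k(-)\otimes_{\IZ}\Lambda$ with the coefficient theory $\calh^k(-;\Lambda)$ for $\Lambda=\IZ\left[\frac{1}{\calp(X)}\right]$. These agree on the finite complex $G\backslash X$ (Lemma~\ref{lem:commuting_calh_and_R}), while on $EG\times_G X$ the map $\calh^k(q(X);\Lambda)$ \emph{is} an isomorphism --- the reduced $\Lambda$-coefficient theory of $BH$ vanishes because $\widetilde{H}^*(BH;\calh^q(\pt;\Lambda))=0$ when $|H|$ is invertible in $\Lambda$, via the Atiyah--Hirzebruch spectral sequence (Lemma~\ref{lem:Hast(X;R)_is_0_implies_calhast(X)_is_0}). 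The retraction $r^k(X)$ is then the composite through $\calh^k(-;\Lambda)$; the asymmetry between $\otimes\Lambda$ and $\Lambda$-coefficients on the infinite complex $BH$ (a $\lim^1$/phantom phenomenon, of which the $p$-adic completion in $K$-theory is the archetype) is precisely why only a split injection, not an isomorphism, can be obtained. Your Mayer--Vietoris induction over cells and the naturality discussion are fine in outline, but they are propagating a false base case.
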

\begin{proof} Put $\Lambda = \IZ\left[\frac{1}{\calp(X)}\right]$.
Consider the following commutative diagram
\[
\xymatrix@!C=12em{
\calh^k(G\backslash X)\otimes_{\IZ} \Lambda
\ar[r]^-{\calh^k(q(X)) \otimes_{\IZ} \Lambda}
\ar[d]
&
\calh^k(EG \times_G X)\otimes_{\IZ} \Lambda
\ar[d]
\\
\calh^k(G\backslash X;\Lambda)
\ar[r]^-{\calh^k(q(X);\Lambda)}
&
\calh^k(EG \times_G X;\Lambda)
}
\]
The left vertical arrow is an isomorphism by Lemma~\ref{lem:commuting_calh_and_R}.
Hence it suffices to show that the lower horizontal map is an isomorphism.
Since $X$ is finite proper, a Mayer-Vietoris argument shows that
it suffices to treat the case $X = G/H$ for some finite group $H \subset G$
such that $|H|$ is invertible
in $\Lambda$. Since $\widetilde{H}^k(BH;\calh^q(\pt;\Lambda)) = 0$
vanishes for all $k$ by~\cite[Corollary 10.2 in Chapter III on page 84]{Brown(1982)}, this follows from
Lemma~\ref{lem:Hast(X;R)_is_0_implies_calhast(X)_is_0}~\ref{lem:Hast(X;R)_is_0_implies_calhast(X)_is_0:calhk(X)_is_0}.
\end{proof}
\begin{lemma} \label{lem:splitting_the_pro-module_K(BH(n-1)}
Let $H$ be a finite group. Let $\calp(H)$ be the set of primes dividing $|H|$.
The canonical map of pro-$\IZ$-modules
\[
\{\widetilde{\calh}^k(BH^{n-1})\} \xrightarrow{\cong}
\prod_{p \in \calp(H)} \{\widetilde{\calh}^k(BH^{n-1};\IZ\widehat{_p})\}
\]
is a pro-isomorphism for $k \in \IZ$. The pro-module
\[
\prod_{\substack{p \; \text{prime}\\ p \not\in \calp(H)}} \{\widetilde{\calh}^k(BH^{n-1};\IZ\widehat{_p})\}
\]
is pro-trivial.
\end{lemma}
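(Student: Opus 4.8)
The claim has two parts: the canonical map onto the product indexed by $p \in \calp(H)$ is a pro-isomorphism, and the complementary product (over primes \emph{not} dividing $|H|$) is pro-trivial. The key structural input is that $BH^{n-1}$ is a finite-dimensional $CW$-complex for each fixed $n$, so for each $k$ we may invoke the reduced Atiyah--Hirzebruch spectral sequence as in Lemma~\ref{lem:Hast(X;R)_is_0_implies_calhast(X)_is_0}. The plan is to reduce everything to statements about reduced \emph{singular} cohomology $\widetilde{H}^\ast(BH;-)$, where the desired splitting and vanishing are standard facts about finite groups, and then lift them to $\widetilde{\calh}^\ast$ levelwise along the skeletal filtration.

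First I would recall the basic number-theoretic fact about finite groups: for a finite group $H$ of order $|H|$, the reduced integral cohomology $\widetilde{H}^k(BH)$ is annihilated by $|H|$ (transfer argument, cf.~\cite[Corollary 10.2 in Chapter III on page 84]{Brown(1982)}), hence is a finite abelian group whose torsion is supported precisely at the primes in $\calp(H)$. Consequently, for a prime $q \notin \calp(H)$ the group $\widetilde{H}^k(BH;\IZ\widehat{_q})$ vanishes, since tensoring (and $\tor$-ing) the finite $\calp(H)$-torsion group $\widetilde{H}^k(BH)$ with $\IZ\widehat{_q}$ kills it; and the canonical map $\widetilde{H}^k(BH) \to \prod_{p \in \calp(H)} \widetilde{H}^k(BH;\IZ\widehat{_p})$ is an isomorphism because a finite abelian $\calp(H)$-group is its own product of $p$-primary parts and completing at $p$ is the identity on a finite $p$-group. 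These statements about the full space $BH$ pass to each skeleton $BH^{n-1}$ because only finitely many cohomology groups are involved and the relevant finiteness and torsion bounds are inherited by finite skeleta.

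Next I would feed these singular-cohomology facts into the generalized theory $\widetilde{\calh}^\ast$ through the spectral sequence. For the pro-triviality of $\prod_{q \notin \calp(H)} \{\widetilde{\calh}^k(BH^{n-1};\IZ\widehat{_q})\}$, the $E_2$-term of the reduced Atiyah--Hirzebruch spectral sequence for $\widetilde{\calh}^\ast(BH^{n-1};\IZ\widehat{_q})$ is $\widetilde{H}^s(BH^{n-1};\calh^t(\pt;\IZ\widehat{_q}))$; since $\calh^t(\pt;\IZ\widehat{_q})$ is built from $\IZ\widehat{_q}$-modules and the relevant $\widetilde{H}^s(BH;\IZ\widehat{_q})$-groups vanish, one applies exactly the argument of Lemma~\ref{lem:Hast(X;R)_is_0_implies_calhast(X)_is_0}~\ref{lem:Hast(X;R)_is_0_implies_calhast(X)_is_0:restriction_from_Xnto_X(n-1)} to conclude that each inclusion $BH^{n-1}\to BH^n$ induces the zero map on $\widetilde{\calh}^k(-;\IZ\widehat{_q})$, which is precisely pro-triviality. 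For the pro-isomorphism in the first displayed map I would argue that its kernel and cokernel are built from the complementary product, hence pro-trivial by the same mechanism, using that $\IZ \to \prod_{p \in \calp(H)} \IZ\widehat{_p}$ induces, at the level of the finite $E_2$-terms, the identity up to $\calp(H)'$-torsion.

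\textbf{Main obstacle.} The subtle point is not the $E_2$-level algebra but the bookkeeping between a \emph{levelwise} statement at each finite skeleton $BH^{n-1}$ and the \emph{pro}-statement: I must be careful that the coefficient system $\calh^t(\pt;\IZ\widehat{_q})$ genuinely consists of $\IZ\widehat{_q}$-modules (so that the singular vanishing over $\IZ\widehat{_q}$ applies verbatim), and that the spectral-sequence comparison is compatible with the skeletal structure maps so that vanishing of each restriction map assembles into pro-triviality rather than merely vanishing in the limit. The decomposition $\widetilde{H}^k(BH) \cong \bigoplus_{p \in \calp(H)}$ (its $p$-primary part) must be matched with the product over $\IZ\widehat{_p}$-completions, which is automatic for finite groups but should be stated cleanly to justify the canonical map being the right one.
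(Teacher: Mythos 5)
Your treatment of the second assertion is sound and is exactly the paper's argument: for a prime $q \notin \calp(H)$ the coefficient group $\calh^t(\pt;\IZ\widehat{_q})$ is a $\IZ\widehat{_q}$-module, so $\widetilde{H}^s(BH;\calh^t(\pt;\IZ\widehat{_q}))=0$ by the transfer argument, and
Lemma~\ref{lem:Hast(X;R)_is_0_implies_calhast(X)_is_0}~\ref{lem:Hast(X;R)_is_0_implies_calhast(X)_is_0:restriction_from_Xnto_X(n-1)}
forces every restriction map $\widetilde{\calh}^k(BH^{n};\IZ\widehat{_q}) \to \widetilde{\calh}^k(BH^{n-1};\IZ\widehat{_q})$ to vanish for $n \ge 2$; since this happens at the same stage $n$ for every such $q$, the product over $q \notin \calp(H)$ is pro-trivial.

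The first assertion is where your sketch has a genuine gap, in two places. First, the reduction ``these statements about $BH$ pass to each skeleton'' fails in the top degree: $\widetilde{H}^{n-1}(BH^{n-1})$ contains a large free summand not annihilated by $|H|$, and $\IZ^r \to \prod_{p \in \calp(H)}(\IZ\widehat{_p})^r$ is injective but not surjective. Consequently the canonical map is \emph{not} a levelwise isomorphism on skeleta, and no $E_2$-comparison of the Atiyah--Hirzebruch spectral sequences of the individual $BH^{n-1}$ can produce it; the pro-isomorphism is genuinely a statement about the system. Second, the kernel and cokernel of the canonical map are not ``built from the complementary product'' $\prod_{q\notin\calp(H)}\{\widetilde{\calh}^k(BH^{n-1};\IZ\widehat{_q})\}$ --- that product is irrelevant to the first assertion. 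The paper instead takes the Bockstein sequence of the coefficient sequence $0 \to \IZ \xrightarrow{i} \prod_{p\in\calp(H)}\IZ\widehat{_p} \to \cok(i) \to 0$, verifies that multiplication by $|H|$ is an automorphism of $\cok(i)$ (this is the one non-formal algebraic input, and note $\cok(i)$ is not $\prod_{q\notin\calp(H)}\IZ\widehat{_q}$), so that the transfer argument and
Lemma~\ref{lem:Hast(X;R)_is_0_implies_calhast(X)_is_0}~\ref{lem:Hast(X;R)_is_0_implies_calhast(X)_is_0:restriction_from_Xnto_X(n-1)}
make $\{\widetilde{\calh}^k(BH^{n-1};\cok(i))\}$ pro-trivial and hence the middle map of the resulting pro-exact sequence a pro-isomorphism; one then identifies $\widetilde{\calh}^k(BH^{n-1};\prod_{p\in\calp(H)}\IZ\widehat{_p})$ with $\prod_{p\in\calp(H)}\widetilde{\calh}^k(BH^{n-1};\IZ\widehat{_p})$ using that $\calp(H)$ is finite and the skeleta are finite-dimensional (a step your sketch also omits, though it is routine). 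Without some such device your argument for the first displayed map does not close.
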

\begin{proof}
We have the exact sequence of abelian groups
\[
0 \to \IZ \xrightarrow{i}  \prod_{p \in \calp(H)} \IZ\widehat{_p} \to \cok(i) \to 0
\]
where $i$ is the product of the canonical embeddings $\IZ \to \IZ\widehat{_p}$.
It induces a long exact  sequence
\begin{multline*}
\ldots \to \widetilde{\calh}^{k-1}(BH^{n-1};\cok(i)) \to \widetilde{\calh}^k(BH^{n-1}) \to
\widetilde{\calh}^k(BH^{n-1};\prod_{p \in \calp(H)} \IZ\widehat{_p})
\\
\to  \widetilde{\calh}^k(BH^{n-1};\cok(i)) \to \ldots
\end{multline*}
and thus an exact sequence of pro-$\IZ$-modules
\begin{multline*}
\{\widetilde{\calh}^{k-1}(BH^{n-1};\cok(i))\} \to \{\widetilde{\calh}^k(BH^{n-1})\} \to
\{\widetilde{\calh}^k(BH^{n-1};\prod_{p \in \calp(H)} \IZ\widehat{_p})\}
\\
\to  \{\widetilde{\calh}^k(BH^{n-1};\cok(i))\}
\end{multline*}
Multiplication with the order of $|H|$ induces an isomorphism
$|H| \cdot \id \colon \cok(i) \xrightarrow{\cong} \cok(i)$.
Hence
$\widetilde{H}^k(BH;\calh^l(\pt;\cok(i)))$
vanishes for all $k,l \in \IZ$ by~\cite[Corollary 10.2 in Chapter III on page 84]{Brown(1982)}.
We conclude from
Lemma~\ref{lem:Hast(X;R)_is_0_implies_calhast(X)_is_0}~\ref{lem:Hast(X;R)_is_0_implies_calhast(X)_is_0:restriction_from_Xnto_X(n-1)}
that the pro-$\IZ$-module $\{\widetilde{\calh}^k(BH^{n-1};\cok(i)\}$
is trivial. This shows that the obvious map of
pro-$\IZ$-modules
\[
\{\widetilde{\calh}^k(BH^{n-1})\} \xrightarrow{\cong}
\{\widetilde{\calh}^k(BH^{n-1};\prod_{p \in \calp(H)} \IZ\widehat{_p})\}
\]
is bijective. The canonical map
\[
\widetilde{\calh}^k(Y;\prod_{p \in \calp(H)} \IZ\widehat{_p}) \to \prod_{p \in \calp(H)}
\widetilde{\calh}^k(Y;\IZ\widehat{_p})
\]
is a natural transformation of cohomology theories satisfying the disjoint union axiom
and is an isomorphism for $Y = \pt$ since the set $\calp(H)$ is finite.
Hence it is an isomorphism for every finite-dimensional $Y$ and in particular for $Y = BH^{n-1}$. We conclude that  the obvious map of
pro-$\IZ$-modules
\[
\{\widetilde{\calh}^k(BH^{n-1})\} \xrightarrow{\cong}
\prod_{p \in \calp(H)} \{\widetilde{\calh}^k(BH^{n-1};\IZ\widehat{_p})\}
\]
is bijective.

If $p$ does not divide
$|H|$, then $H^k(BH;\widetilde{\calh}^q(\pt;\IZ\widehat{_p})) = 0$
by~\cite[Corollary 10.2 in Chapter III on page 84]{Brown(1982)}.
We conclude from
Lemma~\ref{lem:Hast(X;R)_is_0_implies_calhast(X)_is_0}~\ref{lem:Hast(X;R)_is_0_implies_calhast(X)_is_0:restriction_from_Xnto_X(n-1)}
that the map induced by the inclusion
\[
\{\widetilde{\calh}^k(BH^{n};\IZ\widehat{_p})\} \to
\{\widetilde{\calh}^k(BH^{n-1};\IZ\widehat{_p})\}
\]
is trivial for all $k \in \IZ$ and $n \ge 2$. This implies that
\[
\prod_{p \not\in \calp(H)} \{\widetilde{\calh}^k(BH^{n-1};\IZ\widehat{_p})\}
\]
is pro-trivial.
\end{proof}

\begin{lemma} \label{lem:splitting_calhast(EG_times_G_X)}
Let $X$ be a proper $G$-$CW$-complex. Let $\calp$ be a set of primes containing
$\calp(X)$. Then the canonical map
\[
\calh^k(q(X)\colon EG \times_G X \to G\backslash X) \xrightarrow{\cong}
\prod_{p \in \calp} \calh^k(q(X)\colon EG \times_G X \to G\backslash X;\IZ\widehat{_p})
\]
is an isomorphism.
\end{lemma}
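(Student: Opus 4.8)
The plan is to view both sides as $G$-cohomology theories in the variable $X$ and to reduce the statement to a single orbit. Interpreting $\calh^k(q(X))$ as the reduced cohomology of the mapping cone of $q(X)$, the assignments $X \mapsto \calh^k(q(X))$ and $X \mapsto \prod_{p \in \calp} \calh^k(q(X);\IZ\widehat{_p})$ are $G$-cohomology theories on proper $G$-$CW$-complexes satisfying the disjoint union axiom---the second one because arbitrary products are exact and commute with the product over the index set of a disjoint union---and the map in question is the natural transformation between them induced by the ring homomorphism $\IZ \to \prod_{p \in \calp} \IZ\widehat{_p}$. It therefore suffices to prove that this transformation is an isomorphism on all orbits $G/H$ with $H \subseteq G$ finite: by induction over the skeleta of $X$, using the Mayer--Vietoris sequences for attaching equivariant cells together with the disjoint union axiom, one obtains the isomorphism on every finite-dimensional $X$, and to pass to a general $X$ one compares the two Milnor sequences relating $\calh^*(q(X))$ to $\invlim{n \ge 1}{\calh^*(q(X^n))}$ and $\higherlim{n \ge 1}{1}{\calh^*(q(X^n))}$ (as in the proof of Lemma~\ref{lem:Hast(X;R)_is_0_implies_calhast(X)_is_0}). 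A termwise isomorphism of the systems $\{\calh^*(q(X^n))\}$ is in particular a pro-isomorphism, hence induces isomorphisms on $\invlim{}{}$ and $\higherlim{}{1}{}$ by Lemma~\ref{lem:pro-exactness_and_limits}, and the five lemma finishes the passage.

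For $X = G/H$ we have $EG \times_G (G/H) \simeq BH$ and $G\backslash (G/H) = \pt$, so the cofibre sequence of $q(G/H)$ gives $\calh^k(q(G/H)) \cong \widetilde{\calh}^{k-1}(BH)$, and the claim becomes the assertion that the canonical map $\widetilde{\calh}^k(BH) \to \prod_{p \in \calp} \widetilde{\calh}^k(BH;\IZ\widehat{_p})$ is an isomorphism for all $k$. First I would discard the redundant factors: if $p \in \calp$ does not divide $|H|$, then $|H|$ acts invertibly on $\IZ\widehat{_p}$, so $\widetilde{H}^k(BH;\calh^l(\pt;\IZ\widehat{_p})) = 0$ for all $k,l$ by~\cite[Corollary 10.2 in Chapter III on page 84]{Brown(1982)}, whence $\widetilde{\calh}^k(BH;\IZ\widehat{_p}) = 0$ by Lemma~\ref{lem:Hast(X;R)_is_0_implies_calhast(X)_is_0}~\ref{lem:Hast(X;R)_is_0_implies_calhast(X)_is_0:calhk(X)_is_0}. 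Since $\calp(H)$ is finite, the product thus collapses to the finite product $\prod_{p \in \calp(H)} \widetilde{\calh}^k(BH;\IZ\widehat{_p})$, and finite products commute with $\invlim{}{}$, with $\higherlim{}{1}{}$, and with the formation of the Milnor sequence.

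Now Lemma~\ref{lem:splitting_the_pro-module_K(BH(n-1)} supplies a pro-isomorphism $\{\widetilde{\calh}^k(BH^{n-1})\} \to \prod_{p \in \calp(H)} \{\widetilde{\calh}^k(BH^{n-1};\IZ\widehat{_p})\}$ of the systems of skeleta, which by Lemma~\ref{lem:pro-exactness_and_limits} induces isomorphisms on $\invlim{n \ge 1}{}$ and $\higherlim{n \ge 1}{1}{}$. Comparing the Milnor sequences for $BH$ with $\IZ$-coefficients and with $\prod_{p \in \calp(H)} \IZ\widehat{_p}$-coefficients via the five lemma then yields the desired isomorphism $\widetilde{\calh}^k(BH) \xrightarrow{\cong} \prod_{p \in \calp(H)} \widetilde{\calh}^k(BH;\IZ\widehat{_p})$, which settles the orbit case and hence the proof.

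The main obstacle I anticipate is the bookkeeping in the two nested uses of Milnor's sequence and the correct treatment of the $\higherlim{}{1}{}$-terms. The genuinely new content beyond the pro-level splitting of Lemma~\ref{lem:splitting_the_pro-module_K(BH(n-1)} is its conversion into an honest isomorphism of cohomology groups, and for the five lemma to apply one must first ensure that the product over $\calp$ has been reduced to a finite product, so that it commutes with inverse limits and their first derived functors.
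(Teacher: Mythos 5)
Your proposal is correct and follows essentially the same route as the paper: reduction via Milnor's exact sequence and the Five-Lemma plus Mayer--Vietoris to the orbit case $X = G/H$, and then the pro-isomorphism of Lemma~\ref{lem:splitting_the_pro-module_K(BH(n-1)} combined with Lemma~\ref{lem:pro-exactness_and_limits} and a second application of the Milnor sequence. The only (harmless) difference is that you explicitly kill the factors for $p \notin \calp(H)$ at the level of cohomology groups before comparing Milnor sequences, whereas the paper absorbs this into the pro-triviality statement of Lemma~\ref{lem:splitting_the_pro-module_K(BH(n-1)}.
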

\begin{proof}
We conclude from the Milnor's exact  sequence~\cite[Theorem~1.3 in~XIII.1 on page~605]{Whitehead(1978)}
and the Five-Lemma that it
suffices to treat the case, where $X$ is finite dimensional.  Using Mayer-Vietoris sequences
the claim can be reduced to the case $X = G/H$ for some finite group
$H$ such that $\calp$ contains the set $\calp(H)$ of primes dividing the order of $H$.
So we must show  the canonical map
\[
\widetilde{\calh}^k(BH) \xrightarrow{\cong}
\prod_{p \in \calp} \widetilde{\calh}^k(BH;\IZ\widehat{_p})
\]
is bijective for any finite group $H$ with $(H) \subseteq \calp$. By Milnor's exact
sequence~\cite[Theorem~1.3 in~XIII.1 on page~605]{Whitehead(1978)} and the Five-Lemma,
it remains to show the bijectivity of
\begin{eqnarray*}
\invlim{n \to \infty}{\widetilde{\calh}^k(BH^{n-1})}
& \to &
\invlim{n \to \infty}{\prod_{p \in \calp} \widetilde{\calh}^k(BH^{n-1};\IZ \widehat{_p})};
\\
\higherlim{n \to \infty}{1}{\widetilde{\calh}^k(BH^{n-1})}
& \to &
\higherlim{n \to \infty}{1}{\prod_{p \in \calp} \widetilde{\calh}^k(BH^{n-1};\IZ \widehat{_p})}.
\end{eqnarray*}
This follows from Lemma~\ref{lem:pro-exactness_and_limits}
and Lemma~\ref{lem:splitting_the_pro-module_K(BH(n-1)}.
\end{proof}

For a map $f \colon X \to Y$ and a cohomology theory $\calh^*$
define $\calh^k(f)$ to be $\calh^k(\cyl(f),X)$, where $\cyl(f)$ is the mapping cylinder of $f$.

\begin{theorem}[Cohomology of the Borel construction]
\label{the:computing_calh_upper_ast(EG_times_G_X)_for_proper_X}
Let $\calh^*$ be a cohomology theory which satisfies the disjoint union axiom.
Let $X$ be a proper $G$-$CW$-complex. Let $\calp(X)$ be the set of primes $p$
for which $p$ divides the order $|G_x|$ of the isotropy group $G_x$ of the some
$x \in X$.

\begin{enumerate}

\item \label{the:computing_calh_upper_ast(EG_times_G_X)_for_proper_X:long_exact_sequence}
There is a natural long exact sequence
\begin{multline*}
\ldots \to \calh^k(G\backslash X) \xrightarrow{\calh^k(q(X))}
\calh^k(EG \times_G X)
\\
\to \prod_{p \in \calp(X)}
\calh^{k+1}(q(X) \colon EG \times_G X \to G\backslash X;\IZ\widehat{_p}) \xrightarrow{\delta^k}
 \calh^{k+1}(G\backslash X) \xrightarrow{\calh^{k+1}(q(X))} \ldots
\end{multline*}

\item \label{the:computing_calh_upper_ast(EG_times_G_X)_for_proper_X:splitting}
Suppose that $X$ is a finite proper $G$-$CW$-complex.

Then for all $k \in \IZ$ the map $\calh^k(q(X))$ appearing in
assertion~\ref{the:computing_calh_upper_ast(EG_times_G_X)_for_proper_X:long_exact_sequence}
becomes split injective after applying $- \otimes_{\IZ}  \IZ\left[\frac{1}{\calp(X)}\right]$
and we obtain a natural isomorphism
\begin{multline*}
\calh^k(EG \times_G X) \otimes_{\IZ}  \IZ\left[\frac{1}{\calp(X)}\right]  \xrightarrow{\cong}
\\
\calh^k(G\backslash X) \otimes_{\IZ} \IZ\left[\frac{1}{\calp(X)}\right]  \times
\prod_{p \in \calp(X)}
\calh^{k+1}(q(X) \colon EG \times_G X \to G\backslash X;\IZ\widehat{_p})
\otimes_{\IZ} \IZ\left[\frac{1}{\calp(X)}\right];
\end{multline*}

\item \label{the:computing_calh_upper_ast(EG_times_G_X)_for_proper_X:under_finiteness_conditions}
Suppose that $X$ is a finite proper $G$-$CW$-complex. Suppose that
$\calh^k(\pt)$ is  finitely generated as abelian groups
for all $k \in \IZ$. Assume that
$\widetilde{\calh}^k(BH;\IZ\widehat{_p})$ is a finitely generated $\IZ\widehat{_p}$-module
for all $k \in \IZ$ and all isotropy groups $H$ of $X$.

Then for all $k \in \IZ$ the abelian group
$\calh^k(G\backslash X)$ is finitely generated, and for appropriate natural numbers $r_p^k(X)$
there is an exact sequence
\begin{multline*}
0 \to A \to \calh^k(G\backslash X) \to \calh^k(EG \times_G X)
\to
B \times \prod_{p \in \calp(X)} \left(\IZ\widehat{_p}\right)^{r_p^k(X)} \to C \to 0,
\end{multline*}
where $A$, $B$ and $C$ are finite abelian groups with
\[
A \otimes_{\IZ} \IZ\left[\frac{1}{\calp(X)}\right] =
B \otimes_{\IZ}  \IZ\left[\frac{1}{\calp(X)}\right] =
C \otimes_{\IZ}  \IZ\left[\frac{1}{\calp(X)}\right] =0.
\]

\end{enumerate}
\end{theorem}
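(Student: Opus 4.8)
The plan is to obtain all three assertions from a single long exact sequence, namely that of the mapping cylinder pair $(\cyl(q(X)), EG \times_G X)$ of $q(X) \colon EG \times_G X \to G\backslash X$, combined with the two splitting lemmas already at our disposal. Recall that by definition $\calh^k(q(X)) = \calh^k(\cyl(q(X)), EG \times_G X)$. First I would write down the long exact sequence of this pair; under the homotopy equivalence $\cyl(q(X)) \simeq G\backslash X$ the restriction map is identified with the map induced by $q(X)$, so the sequence becomes
\[
\ldots \to \calh^k(q(X)) \to \calh^k(G\backslash X) \xrightarrow{\calh^k(q(X))} \calh^k(EG \times_G X) \to \calh^{k+1}(q(X)) \to \ldots,
\]
where the arrow labelled $\calh^k(q(X))$ is the induced map. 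Applying Lemma~\ref{lem:splitting_calhast(EG_times_G_X)} with $\calp = \calp(X)$ rewrites each relative term $\calh^{k+1}(q(X))$ as $\prod_{p \in \calp(X)} \calh^{k+1}(q(X);\IZ\widehat{_p})$, and this is exactly the long exact sequence of assertion~(i); naturality is inherited from that of the pair sequence and of the lemma.

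For assertion~(ii) I would apply the exact (flat) functor $- \otimes_{\IZ} \IZ\left[\frac{1}{\calp(X)}\right]$ to this long exact sequence. Lemma~\ref{lem:splitting_calhast(q)} supplies a natural retraction $r^k(X)$ showing that $\calh^k(q(X)) \otimes_{\IZ} \id$ is split injective for all $k$, so every connecting homomorphism vanishes after tensoring; the sequence then collapses into short exact sequences which the retractions split. As $\calp(X)$ is finite, the product commutes with the tensor, and one reads off the claimed direct-product decomposition.

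For assertion~(iii) I would first secure two finiteness facts. Since $X$ is finite proper, $G\backslash X$ is a finite $CW$-complex, so the hypothesis that $\calh^q(\pt)$ is finitely generated together with the Atiyah--Hirzebruch spectral sequence (whose $E_2$-columns $H^p(G\backslash X;\calh^q(\pt))$ are finitely generated and bounded in $p$) shows $\calh^k(G\backslash X)$ is finitely generated. Secondly, I would show that $\calh^{k+1}(q(X);\IZ\widehat{_p})$ is a finitely generated $\IZ\widehat{_p}$-module: the assignment $X \mapsto \calh^*(q(X);\IZ\widehat{_p})$ is an equivariant cohomology theory on finite proper $G$-$CW$-complexes whose value on an orbit $G/H$ is $\widetilde{\calh}^{*-1}(BH;\IZ\widehat{_p})$, because $EG \times_G G/H \simeq BH$, $G\backslash(G/H) = \pt$, and the pair sequence then collapses the relative group of $q$ onto reduced cohomology of $BH$; this orbit value is finitely generated over $\IZ\widehat{_p}$ by assumption, and since $\IZ\widehat{_p}$ is Noetherian, induction over the finitely many equivariant cells of $X$ through the Mayer--Vietoris sequences propagates finite generation to $X$. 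As $\IZ\widehat{_p}$ is a discrete valuation ring, this module splits as $(\IZ\widehat{_p})^{r_p^k(X)} \oplus T_p$ with $T_p$ finite, so the product term of assertion~(i) takes the form $B \times \prod_{p \in \calp(X)} (\IZ\widehat{_p})^{r_p^k(X)}$ with $B = \prod_{p \in \calp(X)} T_p$ finite.

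With these in hand I would extract from the long exact sequence of assertion~(i) the exact sequence
\[
0 \to A \to \calh^k(G\backslash X) \to \calh^k(EG \times_G X) \to B \times \prod_{p \in \calp(X)} (\IZ\widehat{_p})^{r_p^k(X)} \to C \to 0,
\]
where $A = \im(\delta^{k-1}) \subseteq \calh^k(G\backslash X)$ and $C = \im(\delta^k) \subseteq \calh^{k+1}(G\backslash X)$ are the images of the adjacent connecting maps. Both are subgroups of finitely generated groups, hence finitely generated; and by assertion~(ii), after inverting $\calp(X)$ the map $\calh^k(q(X))$ is injective and $\calh^k(EG \times_G X)$ surjects onto the product term, whence $A \otimes_{\IZ} \IZ\left[\frac{1}{\calp(X)}\right] = C \otimes_{\IZ} \IZ\left[\frac{1}{\calp(X)}\right] = 0$ by flatness. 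A finitely generated group killed by inverting $\calp(X)$ is finite, so $A$ and $C$ are finite, and $B$ is visibly finite and annihilated by $- \otimes_{\IZ} \IZ\left[\frac{1}{\calp(X)}\right]$. I expect the genuine obstacle to be the second finiteness fact: verifying cleanly that $X \mapsto \calh^*(q(X);\IZ\widehat{_p})$ is an equivariant cohomology theory, pinning down its orbit values as $\widetilde{\calh}^{*-1}(BH;\IZ\widehat{_p})$, and confirming that the Mayer--Vietoris induction really does preserve finite generation over $\IZ\widehat{_p}$; the remainder is bookkeeping with the long exact sequence and the two splitting lemmas.
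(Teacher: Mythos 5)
Your proposal is correct and follows essentially the same route as the paper: the long exact sequence of the pair associated to $q(X)$ combined with Lemma~\ref{lem:splitting_calhast(EG_times_G_X)} for assertion~\ref{the:computing_calh_upper_ast(EG_times_G_X)_for_proper_X:long_exact_sequence}, Lemma~\ref{lem:splitting_calhast(q)} for the splitting in assertion~\ref{the:computing_calh_upper_ast(EG_times_G_X)_for_proper_X:splitting}, and the structure theory of finitely generated $\IZ\widehat{_p}$-modules plus identification of $A$ and $C$ with images of the connecting maps for assertion~\ref{the:computing_calh_upper_ast(EG_times_G_X)_for_proper_X:under_finiteness_conditions}. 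The only difference is that you spell out the Mayer--Vietoris induction establishing finite generation of $\calh^*(q(X);\IZ\widehat{_p})$ (with the correct degree shift $\calh^k(q(G/H);\IZ\widehat{_p}) \cong \widetilde{\calh}^{k-1}(BH;\IZ\widehat{_p})$ on orbits), a step the paper asserts without detail.
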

\begin{proof}~\ref{the:computing_calh_upper_ast(EG_times_G_X)_for_proper_X:long_exact_sequence}
This follows from
Lemma~\ref{lem:splitting_calhast(EG_times_G_X)} and the long exact sequence
associated to $q(X)\colon EG \times_G X \to G\backslash X$.
\\[1mm]~\ref{the:computing_calh_upper_ast(EG_times_G_X)_for_proper_X:splitting}
This follows from assertion~\ref{the:computing_calh_upper_ast(EG_times_G_X)_for_proper_X:long_exact_sequence} and
Lemma~\ref{lem:splitting_calhast(q)}.
\\[1mm]~\ref{the:computing_calh_upper_ast(EG_times_G_X)_for_proper_X:under_finiteness_conditions}
Since by assumption $X$ is a finite proper $G$-$CW$-complex and the $\IZ\widehat{_p}$-module
$\calh^k(q(G/H): EG \times_G G/H \to G\backslash (G/H);\IZ\widetilde{_p}) =\widetilde{\calh}^k(BH;\IZ\widehat{_p})$
is finitely generated  for all $k \in \IZ$ and all isotropy groups $H$ of $X$, the  $\IZ\widehat{_p}$-module
$\calh^k(q(X): EG \times_G X \to G\backslash X;\IZ\widetilde{_p})$
is  finitely generated for all $k \in \IZ$. Since $\calh^k(\pt)$ is a finitely generated
abelian group by assumption for all $k \in \IZ$ and $G\backslash X$ is a finite $CW$-complex,
the abelian group $\calh^k(G\backslash X)$
is finitely generated for $k \in \IZ$.

Recall that $\IZ\widehat{_p}$ is a principal
ideal domain and for each prime ideal $I$ there is $l \in \IN_0$ such that
$I$ is isomorphic to $(p^l)$  and $\IZ\widehat{_p}/I$ is isomorphic to $\IZ/p^l$.
Hence for any $k\in\IZ$ we have an isomorphism
\[ \calh^{k+1}(q(X) \colon EG \times_G X \to G \backslash X; \IZ\widehat{_p}) \cong
B_p \times (\IZ\widehat{_p})^{r^p_k(X)} \]
for some finite abelian $p$-group $B_p$ and some natural number $r_p^k(X)$.
Taking the product over the primes $p\in \calp(X)$,
 we get from Lemma~\ref{lem:splitting_calhast(EG_times_G_X)}
\[ \calh^{k+1}(q(X) \colon EG \times_G X \to G \backslash X) \cong \prod_{p\in\calp(X)}
B_p \times (\IZ\widehat{_p})^{r^k_p(X)}.\]
Since $\calp(X)$ is finite,
\[B := \prod_{p\in \calp(X)} B_p\]
is a finite abelian group which vanishes after inverting the primes in $\calp(X)$ and we have
\[ \calh^{k+1}(q(X) \colon EG \times_G X \to G \backslash X) \cong B \times \prod_{p\in\calp(X)}
 (\IZ\widehat{_p})^{r^k_p(X)}.\]
We obtain from assertion~\ref{the:computing_calh_upper_ast(EG_times_G_X)_for_proper_X:long_exact_sequence}
the long exact  sequence
\begin{multline*}
0 \to A \to \calh^k(G\backslash X) \to \calh^k(EG \times_G X)
\to
B \times \prod_{p \in \calp(X)} \left(\IZ\widehat{_p}\right)^{r_k^p(X)} \to C \to 0,
\end{multline*}
where  $A$ and $C$ can be identified with the image of boundary operators
\begin{eqnarray*}
A &\cong & \image (\delta^{k-1}\colon \calh^{k-1}(EG \times_G X \to G \backslash X) \to \calh^k(G \backslash X));\\
C & \cong & \image (\delta^k\colon \calh^{k}(EG \times_G X \to G \backslash X) \to \calh^{k+1}(G \backslash X)).
\end{eqnarray*}
We conclude from assertion~\ref{the:computing_calh_upper_ast(EG_times_G_X)_for_proper_X:splitting}
 that the image of the boundary operators vanishes after applying
$- \otimes_{\IZ} \IZ\left[\frac{1}{\calp(X)}\right]$.
Since we have already shown that the abelian group $\calh^k(G \backslash X))$ is finitely generated
for all $k \in \IZ$, $A$ and $C$ are finite abelian groups which vanish after inverting all primes in $\calp(X)$.
This finishes the proof of Theorem~\ref{the:computing_calh_upper_ast(EG_times_G_X)_for_proper_X}.
\end{proof}

\begin{theorem} \label{the:numbers_r}
Let $X$ be a finite proper $G$-$CW$-complex. If we take $\calh^*$ to be topological $K$-theory
$K^*$ in  Theorem~\ref{the:computing_calh_upper_ast(EG_times_G_X)_for_proper_X}, then the numbers
$r_p^k(X)$ appearing in assertion~\ref{the:computing_calh_upper_ast(EG_times_G_X)_for_proper_X:under_finiteness_conditions}
of Theorem~\ref{the:computing_calh_upper_ast(EG_times_G_X)_for_proper_X} are given by
\[
r_p^k(X) = \sum_{(g) \in \con_p(G)} \sum_{i \in \IZ} \dim_{\IQ}\left( H^{k+2i}(C_G\langle g \rangle\backslash X^{\langle g \rangle};\IQ)\right).
\]
\end{theorem}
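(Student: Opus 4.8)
The plan is to evaluate the number $r_p^k(X)$, which by its construction in the proof of Theorem~\ref{the:computing_calh_upper_ast(EG_times_G_X)_for_proper_X}~\ref{the:computing_calh_upper_ast(EG_times_G_X)_for_proper_X:under_finiteness_conditions} is the free $\IZ\widehat{_p}$-rank of the finitely generated $\IZ\widehat{_p}$-module $K^{k+1}(q(X)\colon EG\times_G X\to G\backslash X;\IZ\widehat{_p})$. Since $\IZ\widehat{_p}\to\IQ\widehat{_p}$ is flat and $B_p$ is a finite $p$-group, tensoring the decomposition $K^{k+1}(q(X);\IZ\widehat{_p})\cong B_p\times(\IZ\widehat{_p})^{r_p^k(X)}$ over $\IZ\widehat{_p}$ with $\IQ\widehat{_p}$ annihilates $B_p$ and leaves $(\IQ\widehat{_p})^{r_p^k(X)}$, so that
\[
r_p^k(X)=\dim_{\IQ\widehat{_p}}\bigl(K^{k+1}(q(X);\IZ\widehat{_p})\otimes_{\IZ\widehat{_p}}\IQ\widehat{_p}\bigr).
\]
Here I would stress that one must \emph{not} rationalise the coefficients before passing to the limit over skeleta: the theory $K^*(-;\IQ\widehat{_p})$ itself satisfies $K^{k+1}(q(X);\IQ\widehat{_p})=0$ by the transfer argument of Lemma~\ref{lem:splitting_calhast(q)} (applied with the $\IQ$-algebra $\IQ\widehat{_p}$, in which every isotropy order is invertible), so the rank is genuinely carried by the $p$-adic integral theory and its pro-structure, exactly as warned in Section~\ref{sec:Some_preliminaries_about_pro-modules}.

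Next I would access this group through equivariant $K$-theory. Because $\IZ\widehat{_p}$ is flat over $\IZ$ and $G\backslash X$ and every skeleton $(EG\times_G X)^{n-1}$ are finite complexes, Lemma~\ref{lem:commuting_calh_and_R} lets me tensor the pro-isomorphism $\lambda^{X,f}$ of the Completion Theorem~\ref{the:Completion_Theorem} with $\IZ\widehat{_p}$; with Lemma~\ref{lem:pro-exactness_and_limits} this identifies $K^*(EG\times_G X;\IZ\widehat{_p})$ with the $I$-adic completion $K^*_G(X;\IZ\widehat{_p})\widehat{_I}$ of $K^*_G(X)\otimes_\IZ\IZ\widehat{_p}$. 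Rationalising the long exact sequence of $q(X)$ (exact since $\IQ\widehat{_p}$ is flat) and using that $K^*(G\backslash X;\IZ\widehat{_p})\otimes\IQ\widehat{_p}=K^*(G\backslash X;\IQ\widehat{_p})$ for the finite complex $G\backslash X$, the splitting of Lemma~\ref{lem:splitting_calhast(q)} shows that $q(X)^*$ is injective after applying $-\otimes_{\IZ\widehat{_p}}\IQ\widehat{_p}$. Hence the connecting map is surjective and
\[
K^{k+1}(q(X);\IZ\widehat{_p})\otimes_{\IZ\widehat{_p}}\IQ\widehat{_p}\;\cong\;\cok\bigl(q(X)^*\colon K^k(G\backslash X;\IQ\widehat{_p})\to K^k_G(X;\IZ\widehat{_p})\widehat{_I}\otimes_{\IZ\widehat{_p}}\IQ\widehat{_p}\bigr).
\]
This is what explains the degree bookkeeping: although the relative term sits in degree $k+1$, it is a cokernel in degree $k$, which will produce $H^{k+2i}$ rather than $H^{k+1+2i}$.

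Finally I would compute the target with the rational equivariant Chern character of~\cite[Theorem~0.1]{Lueck(2007)}, which for the finite proper $G$-$CW$-complex $X$ gives a natural isomorphism
\[
K^n_G(X)\otimes_{\IZ}\IQ\;\cong\;\prod_{(g)}\;\prod_{i\in\IZ}H^{n+2i}\bigl(C_G\langle g\rangle\backslash X^{\langle g\rangle};\IQ\bigr),
\]
the outer product running over conjugacy classes of elements of finite order. The decisive step, which I expect to be the main obstacle, is the localisation behaviour of completion: completing $K^*_G(X)$ at the augmentation ideal $I=\AI_G(L)$ and then $p$-completing is an Atiyah--Segal type operation that, after $-\otimes\IQ\widehat{_p}$, annihilates every summand except those indexed by conjugacy classes of $p$-power order, since the prime ideals of the representation rings lying over $(p)$ and containing the augmentation ideal correspond precisely to $p$-singular elements. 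Thus only the classes in $\{(1)\}\cup\con_p(G)$ survive; the trivial class $(1)$ reproduces $K^k(G\backslash X;\IQ\widehat{_p})$ and is exactly the image of $q(X)^*$, so it is killed in the cokernel above. What remains is the sum over $(g)\in\con_p(G)$ of $\prod_i H^{k+2i}(C_G\langle g\rangle\backslash X^{\langle g\rangle};\IQ\widehat{_p})$. Taking $\IQ\widehat{_p}$-dimensions and using $\dim_{\IQ\widehat{_p}}(V\otimes_\IQ\IQ\widehat{_p})=\dim_\IQ V$ yields
\[
r_p^k(X)=\sum_{(g)\in\con_p(G)}\sum_{i\in\IZ}\dim_{\IQ}H^{k+2i}\bigl(C_G\langle g\rangle\backslash X^{\langle g\rangle};\IQ\bigr),
\]
as claimed. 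As a consistency check I would run the argument for a single orbit $X=G/H$, where $EG\times_G X=BH$ and $K^{k+1}(q(X);\IZ\widehat{_p})\cong\widetilde{K}^k(BH;\IZ\widehat{_p})$, and verify that the classical finite-group Atiyah--Segal computation recovers the count $|\con_p(H)|$ concentrated in even degrees, matching the right-hand side after the double-coset identification of $C_G\langle g\rangle\backslash(G/H)^{\langle g\rangle}$.
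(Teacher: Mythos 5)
Your opening and closing moves agree with the paper: you correctly reduce the claim to computing $\dim_{\IQ\widehat{_p}}\bigl(K^{k+1}(q(X);\IZ\widehat{_p})\otimes_{\IZ\widehat{_p}}\IQ\widehat{_p}\bigr)$, and the target of the computation is the same Chern-character expression over $\con_p(G)$. But the middle of your argument has a genuine gap, and it sits exactly where you yourself say you expect "the main obstacle" to be. You apply the rational equivariant Chern character to $K^*_G(X)\otimes_{\IZ}\IQ$ \emph{first} and then assert that $I$-adic completion followed by $-\otimes_{\IZ\widehat{_p}}\IQ\widehat{_p}$ selects precisely the summands indexed by $p$-power-order classes, with matching dimensions. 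The justification you offer -- the prime ideals of the representation ring over $(p)$ containing the augmentation ideal correspond to $p$-singular elements -- is a statement about $\mathrm{Spec}(R(H))$ for \emph{finite} $H$. Here $G$ is an arbitrary discrete group, there is no representation ring of $G$, and $I$ is the augmentation ideal of $\IK_G(L)$ for an auxiliary finite-dimensional complex $L$; no Segal-type description of its spectrum is available or cited. Moreover, completion is an inverse limit over the integral module structure, while the Chern character is only a rational isomorphism, so one cannot complete the rational decomposition summand by summand without further argument. As it stands, the "decisive step" is asserted, not proved.

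The paper fills precisely this hole by reversing the order of operations: it treats $Y\mapsto K^{k+1}(q(Y)\colon EG\times Y\to G\backslash Y;\IZ\widehat{_p})\otimes_{\IZ\widehat{_p}}\IQ\widehat{_p}$ as an equivariant cohomology theory in its own right and applies the general equivariant Chern character of \cite[Theorem~4.2]{Lueck(2007)} to \emph{that} theory. The support statement you need is thereby pushed down to the orbit level, where the groups involved are the finite isotropy groups $H$ of $X$: there the Atiyah--Segal Completion Theorem together with \cite[Theorem~3.5]{Lueck(2007)} identifies the coefficient system $\widetilde{K}^0(BH;\IZ\widehat{_p})\otimes_{\IZ\widehat{_p}}\IQ\widehat{_p}$ with $\im(\res_H^{H_p})\otimes_{\IZ\widehat{_p}}\IQ\widehat{_p}$ (and shows the odd part vanishes), and then \cite[Theorem~5.2~(c) and Example~5.3]{Lueck(2007)} converts the resulting Bredon cohomology into the sum over $(g)\in\con_p(G)$ of $H^{k+2i}(C_G\langle g\rangle\backslash X^{\langle g\rangle};\IQ\widehat{_p})$. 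If you want to rescue your route, you would have to prove an analogue of this orbit-level identification and then show it is compatible with the Chern character decomposition of $K^*_G(X)\otimes\IQ$ -- which in effect reproduces the paper's argument. Your remarks about not rationalising before completing, the cokernel description of the relative term, and the single-orbit consistency check are all correct and worth keeping.
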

\begin{proof}
Consider the equivariant cohomology theory with values in
$\IQ\widehat{_p}$ (in the sense of~\cite[Section~1]{Lueck(2007)}
which is given for a proper $G$-$CW$-complex $Y$ by
\[
\calh^k_G(Y) := K^{k+1}(q(Y) \colon EG \times Y \to G\backslash Y;\IZ\widehat{_p}) \otimes_{\IZ\widehat{_p}} \IQ\widehat{_p}.
\]
Notice that it  does not satisfy the disjoint union axiom (for arbitrary index sets) since
infinite products are not compatible with $- \otimes_{\IZ\widehat{_p}} \IQ\widehat{_p}$, but this will not matter since
we will finally consider a finite proper $G$-$CW$-complex $X$. Let $\calf(X)$ be the family
of subgroups $H$ of $G$ with $X^H \not= \emptyset$. We obtain from~\cite[Theorem~4.2]{Lueck(2007)})
(using the notation  from that paper)
an isomorphism of $\IQ\widehat{_p}$-modules
\begin{eqnarray}
\calh^k_G(X) & \xrightarrow{\cong} & \prod_{p+q = k} H^p_{\IQ\widehat{_p}\Sub(G;\calf(X))}(X;\calh^q_G(G/?)).
\label{Chern-character}
\end{eqnarray}
We get from the Atiyah-Segal Completion Theorem (see~\cite[Theorem~2.1]{Atiyah-Segal(1969)}) and~\cite[Theorem~3.5]{Lueck(2007)}
(using the notation of~\cite[Theorem~3.5]{Lueck(2007)})
pro-isomorphism of pro-$\IZ$-modules
\begin{eqnarray*}
\{\widetilde{K}^q(BH^{n-1})\} & \xrightarrow{\cong} &
\begin{cases}
\prod_{p \in \calp(H)} \{\im(\res_H^{H_p})/p^n \cdot \im(\res_H^{H_p})\} & q = 0;
\\
\{0\}  & q = 1.
\end{cases}
\end{eqnarray*}
Hence we obtain from Lemma~\ref{lem:splitting_the_pro-module_K(BH(n-1)} a
pro-isomorphism of pro-$\IZ$-modules
\begin{eqnarray*}
\prod_{p \in \calp(H)}  \{\widetilde{K}^n(BH^{n-1};\IZ\widehat{_p}))\} & \xrightarrow{\cong} &
\prod_{p \in \calp(H)} \{\im(\res_H^{H_p})/p^n \cdot \im(\res_H^{H_p})\}.
\end{eqnarray*}
One easily checks that it induces for each prime $p \in \calp(H)$ an isomorphism of
pro-isomorphism of pro-$\IZ$-modules
\begin{eqnarray*}
\{\widetilde{K}^0(BH^{n-1};\IZ\widehat{_p}))\} & \xrightarrow{\cong} &
\{\im(\res_H^{H_p})/p^n \cdot \im(\res_H^{H_p})\}.
\end{eqnarray*}
This implies that the two functors from $\Sub(G;\calf(X))$ to the category of $\IQ\widehat{_p}$-modules which send a an object $H \in \calf(X)$ to
$\calh^q(G/H) = \widetilde{K}^q(BH;\IZ\widehat{_p}) \otimes_{\IZ\widehat{_p}} \IQ\widehat{_p}$
and to $\im(\res_H^{H_p})  \otimes_{\IZ\widehat{_p}} \IQ\widehat{_p}$ respectively agree for even $q$.
For odd $q$ the functor given by $\calh^q(G/H) = \widetilde{K}^q(BH;\IZ\widehat{_p}) \otimes_{\IZ\widehat{_p}} \IQ\widehat{_p}$
is trivial.
Hence we obtain from~\eqref{Chern-character} the $\IQ\widehat{_p}$-isomorphism
\begin{eqnarray}
\calh^k_G(X) & \xrightarrow{\cong} & \prod_{i \in \IZ} H^{k+2i}_{\IQ\widehat{_p}\Sub(G)}(X;\im(\res_?^{?_p})  \otimes_{\IZ\widehat{_p}} \IQ\widehat{_p}).
\label{Chern-character_improved}
\end{eqnarray}
Now one shows analogous to the argument
in~\cite[Section~4]{Lueck(2007)} using~\cite[Theorem~5.2~(c) and Example~5.3]{Lueck(2007)} that there is an isomorphism of $\IQ\widehat{_p}$-modules
\begin{multline}
H^{k +2i}_{\IQ\widehat{_p}\Sub(G)}(X;\im(\res_?^{?_p})  \otimes_{\IZ\widehat{_p}} \IQ\widehat{_p})
\\
\cong
\prod_{(g) \in \con_p(G)} \prod_{i \in \IZ} H^{k+2i}(C_G\langle g \rangle\backslash X^{\langle g \rangle};\IQ\widehat{_p})).
\label{comp_of_bredon_coho}
\end{multline}
Now we conclude from~\eqref{Chern-character_improved} and~\eqref{comp_of_bredon_coho}.
\begin{eqnarray*}
\dim_{\IQ\widehat{_p}}(\calh_G(X))
&  =  &
\sum_{(g) \in \con_p(G)} \sum_{i \in \IZ} \dim_{\IQ\widehat{_p}}\left( H^{k+2i}(C_G\langle g \rangle\backslash X^{\langle g \rangle};\IQ\widehat{_p})\right)
\\
& = &
\sum_{(g) \in \con_p(G)} \sum_{i \in \IZ} \dim_{\IQ}\left(H^{k+2i}(C_G\langle g \rangle\backslash X^{\langle g \rangle};\IQ)\right).
\end{eqnarray*}
Since $\calh^k_G(X) = K^{k+1}(q(Y) \colon EG \times X \to G\backslash X;\IZ\widehat{_p})$ is $\IZ\widehat{_p}$-isomorphic to $(\IZ\widehat{_p})^{r^k_p(X)}$
by definition of $r^k_p(X)$, Theorem~\ref{the:numbers_r} follows.
\end{proof}


\typeout{-----------------------  Section 4  ------------------------}

\section{Borel homology}
\label{sec:Borel_homology}

The material of Section~\ref{sec:Borel_cohomology}
has analogues for Borel homology. We begin with the analogue
of Theorem~\ref{the:computing_calh_upper_ast(EG_times_G_X)_for_proper_X}.
Let $\calh_*$ be  a (generalized) homology theory with
values in the category of
$\IZ$-modules which satisfies the \emph{disjoint union axiom}
for arbitrary index sets, i.e., for any family $\{X_i \mid i \in I\}$  the map
\[
\bigoplus_{i \in I} \calh_k(j_i)\colon \bigoplus _{i \in I} \calh_k(X_i)
\xrightarrow{\cong} \calh_k (\coprod_{i \in I} X_i)
\]
is an isomorphism, where $j_i \colon X_i \to \coprod_{i \in I} X_i$
is the canonical inclusion.
Given a $CW$-complex $X$, let $\widetilde{\calh}_k(X)$
be the kernel of the map
$\calh_k(X) \to \calh_k(\pt)$ induced by the projection $X \to \pt$.
We define for any abelian group $A$
\[
\calh_k(X;A) :=  \calh_{k-d}(X \times M(A,d),X \times \pt)
\]
where $d$ is some positive integer and
 $M(A,d)$ is the Moore space associated to $A$ in degree $d$.

\begin{theorem} [Homology of the Borel construction] \label{the:computing_calh_lower_ast(EG_times_G_X)_for_proper_X}
Let $\calh_*$ be a homology theory which satisfies the disjoint union axiom.
Let $X$ be a proper $G$-$CW$-complex.
Let $\calp(X)$ be the set of primes $p$
for which $p$ divides the order $|G_x|$ of the isotropy group $G_x$
of the some $x \in X$. Then:

\begin{enumerate}

\item \label{the:computing_calh_lower_ast(EG_times_G_X)_for_proper_X:long_exact_sequence}
There is a long exact sequence
\begin{multline*}
\ldots \to \calh_{k+1}(G\backslash X) \to
\bigoplus_{p \in \calp(X)} \calh_{k+1}(q(X) \colon EG \times_G X \to G\backslash X;\IZ/{p^\infty}) \to
\\
\to \calh_k(EG \times_G X) \xrightarrow{\calh_{k}(q(X))}
 \calh_{k}(G\backslash X) \to \ldots;
\end{multline*}

\item \label{the:computing_calh_lower_ast(EG_times_G_X)_for_proper_X:splitting}
The map $\calh_{k}(q(X))$ appearing in
assertion~\ref{the:computing_calh_lower_ast(EG_times_G_X)_for_proper_X:long_exact_sequence}
induces after applying $- \otimes_{\IZ}  \IZ\left[\frac{1}{\calp(X)}\right]$ a natural isomorphism
\begin{equation*}
\calh_k(q(X)) \otimes_{\IZ}  \id \colon
\calh_k(EG \times_G X) \otimes_{\IZ}  \IZ\left[\frac{1}{\calp(X)}\right]  \xrightarrow{\cong}
\calh_k(G\backslash X) \otimes_{\IZ} \IZ\left[\frac{1}{\calp(X)}\right];
\end{equation*}

\item \label{the:computing_calh_lower_ast(EG_times_G_X)_for_proper_X:under_finiteness_conditions}
Suppose that $X$ is a finite proper $G$-$CW$-complex. Suppose that
$\calh_k(\pt)$ is  finitely generated as abelian groups
for all $k \in \IZ$. Assume that $\widetilde{\calh}_k(BH;\IZ/{p^\infty})$ can be embedded into
$(\IZ/{p^\infty})^r$ for some $r= r(k,H)$ for all $k \in \IZ$ and all isotropy
groups $H$ of $X$.

Then, for appropriate natural numbers $r_k^p(X)$, there is an exact sequence
\begin{multline*}
0 \to C \to \bigoplus_{p \in \calp(X)} (\IZ/{p^\infty})^{r_k^p(X)} \times B
\to \\
 \to \calh_k(EG \times_G X)  \xrightarrow{\calh_{k}(q(X))}  \calh_k(G\backslash X)
\to A \to 0,
\end{multline*}
where $A$, $B$ and $C$ are finite abelian groups with
\[
A \otimes_{\IZ} \IZ\left[\frac{1}{\calp(X)}\right] =
B \otimes_{\IZ}  \IZ\left[\frac{1}{\calp(X)}\right] =
C \otimes_{\IZ}  \IZ\left[\frac{1}{\calp(X)}\right] =0.
\]
\end{enumerate}
\end{theorem}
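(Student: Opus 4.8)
The plan is to dualize the argument of Section~\ref{sec:Borel_cohomology} essentially verbatim, replacing products by direct sums and the coefficients $\IZ\widehat{_p}$ by $\IZ/{p^\infty}$. First I would set up the homological analogues of the preparatory lemmas. The analogue of Lemma~\ref{lem:Hast(X;R)_is_0_implies_calhast(X)_is_0} reads: if $\widetilde{H}_k(Z;\calh_l(\pt))=0$ for all $k$ and $l$, then $\widetilde{\calh}_k(Z)=0$ for all $k$. This is in fact cleaner than in cohomology: the homological Atiyah--Hirzebruch spectral sequence of the finite-dimensional skeleton $Z^n$ again collapses onto the single line $k=n$, so $Z^{n-1}\hookrightarrow Z^n$ induces the zero map on reduced $\calh_\ast$, and since $\calh_\ast$ satisfies the disjoint union axiom one has $\widetilde{\calh}_k(Z)=\dirlim_n\widetilde{\calh}_k(Z^n)=0$ with no $\lim^1$-term to worry about. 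I would also record the homological form of Lemma~\ref{lem:commuting_calh_and_R}, namely $\calh_k(Y)\otimes_\IZ R\xrightarrow{\cong}\calh_k(Y;R)$ for finite $Y$ and flat $R$, proved by comparing homology theories and a Mayer--Vietoris induction.

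The heart of the matter, replacing Lemmas~\ref{lem:splitting_the_pro-module_K(BH(n-1)} and~\ref{lem:splitting_calhast(EG_times_G_X)}, is a $p$-local splitting of the relative homology of $q(X)$. Write $W$ for the pair $(\cyl(q(X)),EG\times_G X)$, so that the long exact sequence of $W$ is the Borel-homology sequence of $q(X)$. The mechanism is the coefficient short exact sequence
\[
0\to\IZ\to\IZ[1/\calp(X)]\to\bigoplus_{p\in\calp(X)}\IZ/{p^\infty}\to 0,
\]
whose associated Bockstein long exact sequence in $\calh_\ast(W;-)$ degenerates once one knows $\calh_k(W;\IZ[1/\calp(X)])=0$ for all $k$. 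I would prove this vanishing exactly as in the cohomological proofs: passing to the colimit over skeleta and using Mayer--Vietoris reduces it to $X=G/H$, where it becomes $\widetilde{\calh}_\ast(BH;\IZ[1/\calp(H)])=0$; the latter follows from the homological lemma above together with the vanishing of the reduced group homology of the finite group $H$ with coefficients in a module in which $|H|$ is invertible (\cite[Corollary~10.2 in Chapter~III]{Brown(1982)}). This identifies the $\IZ$-coefficient relative groups of $W$ with the $\bigoplus_{p\in\calp(X)}\IZ/{p^\infty}$-coefficient ones up to the degree introduced by the Bockstein connecting map; here one must simply keep track of that degree, so that the $\IZ/{p^\infty}$-coefficient relative groups enter in the degree recorded in assertion~\ref{the:computing_calh_lower_ast(EG_times_G_X)_for_proper_X:long_exact_sequence} (this is also the shift responsible for the index $r^{k+1}$ in the homology formula against $r^{k}$ in cohomology). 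Substituting the split relative groups into the long exact sequence of $W$ gives assertion~\ref{the:computing_calh_lower_ast(EG_times_G_X)_for_proper_X:long_exact_sequence}, once one checks, via the disjoint union axiom, that $\calh_\ast(W;\bigoplus_p\IZ/{p^\infty})\cong\bigoplus_p\calh_\ast(W;\IZ/{p^\infty})$ even when $\calp(X)$ is infinite.

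For assertion~\ref{the:computing_calh_lower_ast(EG_times_G_X)_for_proper_X:splitting} I would tensor the long exact sequence of~\ref{the:computing_calh_lower_ast(EG_times_G_X)_for_proper_X:long_exact_sequence} with the flat ring $\IZ[1/\calp(X)]$. The decisive point, and where homology is genuinely better behaved than cohomology, is that $\IZ/{p^\infty}\otimes_\IZ\IZ[1/p]=0$: the Bockstein for $0\to\IZ\xrightarrow{p^n}\IZ\to\IZ/p^n\to 0$ shows $\calh_\ast(W;\IZ/p^n)\otimes_\IZ\IZ[1/p]=0$, and one passes to the colimit $\IZ/{p^\infty}=\dirlim_n\IZ/p^n$. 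Thus every $\IZ/{p^\infty}$-term in the sequence is annihilated after inverting $\calp(X)$ --- in contrast to the cohomology case, where the surviving factors $\IZ\widehat{_p}\otimes_\IZ\IZ[1/\calp(X)]=\IQ\widehat{_p}$ forced mere split injectivity --- so that $\calh_k(q(X))\otimes_\IZ\id$ becomes the asserted natural isomorphism.

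For assertion~\ref{the:computing_calh_lower_ast(EG_times_G_X)_for_proper_X:under_finiteness_conditions} I would repeat the cohomological argument with the structure theory of finitely generated $\IZ\widehat{_p}$-modules replaced by its Pontryagin dual. As $G\backslash X$ is a finite $CW$-complex and each $\calh_k(\pt)$ is finitely generated, the Atiyah--Hirzebruch spectral sequence shows $\calh_k(G\backslash X)$ is finitely generated. By Mayer--Vietoris the hypothesis that each $\widetilde{\calh}_k(BH;\IZ/{p^\infty})$ embeds into $(\IZ/{p^\infty})^{r}$ propagates to an embedding of $\calh_k(q(X);\IZ/{p^\infty})$ into some $(\IZ/{p^\infty})^{N}$; its Pontryagin dual is then a quotient of $(\IZ\widehat{_p})^{N}$, hence a finitely generated module over the discrete valuation ring $\IZ\widehat{_p}$, and dualizing back gives $\calh_k(q(X);\IZ/{p^\infty})\cong B_p\oplus(\IZ/{p^\infty})^{r_k^p(X)}$ with $B_p$ a finite $p$-group. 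Putting $B=\bigoplus_{p\in\calp(X)}B_p$, which is finite since $\calp(X)$ is finite for finite $X$, and extracting the five-term segment of the long exact sequence of~\ref{the:computing_calh_lower_ast(EG_times_G_X)_for_proper_X:long_exact_sequence} produces the claimed exact sequence, with $A$ and $C$ the images of the two adjacent connecting homomorphisms. Each of $A,C$ is a subquotient of a finitely generated group lying inside a divisible torsion group, hence finite, and both die after inverting $\calp(X)$ by assertion~\ref{the:computing_calh_lower_ast(EG_times_G_X)_for_proper_X:splitting}. The step I expect to be the main obstacle is the splitting of the second paragraph, and within it the correct bookkeeping of the Bockstein degree shift; the remaining arguments are faithful dualizations of Section~\ref{sec:Borel_cohomology}, with $\lim^1$-difficulties absent because generalized homology commutes with colimits over skeleta.
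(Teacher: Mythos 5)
Your proposal is correct and follows essentially the same route as the paper: the Bockstein sequence of $0\to\IZ\to\IZ\left[\frac{1}{\calp(X)}\right]\to\bigoplus_{p\in\calp(X)}\IZ/p^{\infty}\to 0$ combined with the vanishing of $\calh_*\bigl(q(X);\IZ\left[\frac{1}{\calp(X)}\right]\bigr)$ (reduced to orbits $G/H$ and the vanishing of reduced group homology with $|H|$ invertible) gives assertions (i) and (ii), and Pontryagin duality to finitely generated $\IZ\widehat{_p}$-modules gives the structure result for groups embedding in $(\IZ/p^{\infty})^r$ needed for (iii), which is exactly the paper's Lemma~\ref{lem:abelian_p-groups_embedding_into_(Z/pinfty)r}. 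The only cosmetic difference is in (ii), where you kill the $\IZ/p^{\infty}$-coefficient terms of the long exact sequence directly rather than quoting the vanishing of the localized relative groups, but the mechanism is identical.
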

\begin{proof}~\ref{the:computing_calh_lower_ast(EG_times_G_X)_for_proper_X:long_exact_sequence}
There is a canonical isomorphism
\[
\bigoplus_{p \in \calp(X)} \IZ/p^{\infty} \xrightarrow{\cong} \bigoplus_{p \in \calp(X)} \IZ[1/p]/\IZ
\xrightarrow{\cong} \IZ\left[\frac{1}{\calp(X)}\right]/\IZ.
\]
Thus we obtain a short exact sequence of abelian groups
\[
1 \to \IZ \to \IZ\left[\frac{1}{\calp(X)}\right] \to \bigoplus_{p \in \calp(X)} \IZ/p^{\infty} \to 1.
\]
The boundary of the associated Bockstein sequence
yields a  natural transformation of
equivariant homology theories for proper $G$-$CW$-complexes satisfying the disjoint union axiom
\begin{multline*}
\partial_k(X) \colon \bigoplus_{p \in \calp(X)}
\calh_k(q(X) \colon EG \times_G X \to G\backslash X;\IZ/p^{\infty})
\\
\to \calh_{k-1}(q(X) \colon EG \times_G X \to G\backslash X).
\end{multline*}
If $H$ is a finite subgroup and $\calp$ a set of primes
which contains all primes dividing the order of $H$, then
\[
\widetilde{H}_k(BH,M) \otimes_{\IZ} \IZ\left[\frac{1}{\calp}\right] =
\widetilde{H}_k\left(BH,M \otimes_{\IZ} \IZ\left[\frac{1}{\calp}\right]\right)  =  0
\]
holds for all $k \in \IZ$ and all abelian groups $M$~\cite[Corollary 10.2 in Chapter III on page 84]{Brown(1982)}.
By the Atiyah-Hirzebruch spectral sequence we conclude that
\[\widetilde{\calh}_{k-1}\left(BH;\IZ\left[\frac{1}{\calp(X)}\right]\right))
\cong \calh_{k}\left(EG \times_G G/H \to G\backslash (G/H);\IZ\left[\frac{1}{\calp(X)}\right]\right)
\]
vanishes for all $k  \in \IZ$ and all subgroups $H \subseteq$ that appear as  isotropy group of $X$.
This implies that $\calh_k\left(q(X) \colon EG \times_G X \to G\backslash X;\IZ\left[\frac{1}{\calp(X)}\right]\right)$ vanishes for all $k \in \IZ$.
Hence  $\partial_k(X)$ is an isomorphism.
\\[1mm]~\ref{the:computing_calh_lower_ast(EG_times_G_X)_for_proper_X:splitting}
Notice that the functor $- \otimes_{\IZ} \IZ\left[\frac{1}{\calp(X)}\right]$ is exact.
We have already shown that
\[
\calh_k(q(X) \colon EG \times_G X \to G\backslash X) \otimes_{\IZ} \IZ\left[\frac{1}{\calp(X)}\right]
\cong \calh_k\left(EG \times_G X \to G\backslash X;\IZ\left[\frac{1}{\calp(X)}\right]\right)
\]
vanishes for all $k \in \IZ$. Now the claim follow from the long exact homology sequence associated to
$q(X \colon EG \times_G X \to G\backslash X)$.
\\[1mm]~\ref{the:computing_calh_lower_ast(EG_times_G_X)_for_proper_X:under_finiteness_conditions}~%
The proof is analogous to the proof of
Theorem~\ref{the:computing_calh_lower_ast(EG_times_G_X)_for_proper_X}~%
\ref{the:computing_calh_lower_ast(EG_times_G_X)_for_proper_X:under_finiteness_conditions},
as long one has Lemma~\ref{lem:abelian_p-groups_embedding_into_(Z/pinfty)r}
available which we explain next.

Fix a prime $p$. For an abelian group $A$ define $A^* := \hom_{\IZ}(A,\IQ/\IZ)$.
If $A$ is a finite abelian $p$-group, then $A$ and $A^*$ are abstractly isomorphic as
abelian groups. If $A$ is an abelian $p$-group, then $A$ is the colimit
$\colim{n \to \infty}{\ker(p^n \cdot \id_A \colon A \to A)}$ and hence
$A^* = \invlim{n\to \infty} (\ker(p^n \cdot \id_A)^*$. Hence $A^*$ is the inverse limit of a system of
finite abelian $p$-groups and therefore carries a canonical $\IZ\widehat{_p}$-module structure and
a canonical structure of a totally disconnected compact topological Hausdorff group.
\end{proof}

\begin{lemma} \label{lem:abelian_p-groups_embedding_into_(Z/pinfty)r}\
\begin{enumerate}
\item \label{lem:abelian_p-groups_embedding_into_(Z/pinfty)r:characterization}
The following assertions are equivalent for an abelian $p$-group $A$:
\begin{enumerate}
\item $A$ can be embedded into $(\IZ/{p^\infty})^r$ for some $r=r(A)$;

\item $A\cong (\IZ/{p^\infty})^r\times T$  for some $r=r(A)$ and some finite abelian
  $p$-group $T$;

\item $A^*$ is a finitely generated $\IZ\widehat{_p}$-module;

\end{enumerate}

\item \label{lem:abelian_p-groups_embedding_into_(Z/pinfty)r:_Serre_category}
The category of  groups $A$, which embed into $(\IZ/{p^\infty})^r$ for some $r=r(A)$,
form a Serre category, i.e., given an exact sequence of abelian groups
$0 \to A \to B \to C \to 0$, then $B$ belongs to the category if and only if both $A$ and $C$ do.

\end{enumerate}

\end{lemma}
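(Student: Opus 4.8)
The plan is to reduce the whole statement to the duality $A \mapsto A^* = \hom_\IZ(A,\IQ/\IZ)$ set up just before the lemma in the proof of Theorem~\ref{the:computing_calh_lower_ast(EG_times_G_X)_for_proper_X}, together with the structure theory of finitely generated modules over the principal ideal domain $\IZ\widehat{_p}$. The three facts I would record first are: (1) the functor $(-)^*$ is exact, since $\IQ/\IZ$ is an injective $\IZ$-module; (2) one has $\hom_\IZ(\IZ/p^\infty,\IQ/\IZ) \cong \IZ\widehat{_p}$, and since finite products pass through $\hom$ this gives $((\IZ/p^\infty)^r)^* \cong (\IZ\widehat{_p})^r$; and (3) on abelian $p$-groups $(-)^*$ lands in compact $\IZ\widehat{_p}$-modules and satisfies biduality $A \cong (A^*)^\vee$, where $(-)^\vee$ denotes the continuous dual, under which $(\IZ\widehat{_p})^\vee \cong \IZ/p^\infty$ and finite groups are self-dual. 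With these in hand the lemma becomes a dictionary between discrete $p$-groups and compact $\IZ\widehat{_p}$-modules.

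For part (i), the implication (b)$\Rightarrow$(a) is immediate: a finite abelian $p$-group $T$ embeds into some $(\IZ/p^\infty)^m$ (each cyclic factor $\IZ/p^k$ sits inside $\IZ/p^\infty$ as its $p^k$-torsion), so $(\IZ/p^\infty)^r \times T$ embeds into $(\IZ/p^\infty)^{r+m}$. For (a)$\Rightarrow$(c) I would dualize an embedding $A \hookrightarrow (\IZ/p^\infty)^r$ using (1) and (2) to get a surjection $(\IZ\widehat{_p})^r \twoheadrightarrow A^*$, whence $A^*$ is finitely generated over $\IZ\widehat{_p}$. For the converse (c)$\Rightarrow$(b) I would apply the structure theorem over the PID $\IZ\widehat{_p}$ to write $A^* \cong (\IZ\widehat{_p})^r \oplus F$ with $F$ a finite abelian $p$-group, and then apply $(-)^\vee$ together with biduality to obtain $A \cong (A^*)^\vee \cong (\IZ/p^\infty)^r \times F$, which is the asserted form with $T := F$.

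For part (ii) I would dualize a short exact sequence $0 \to A \to B \to C \to 0$ (all three groups are automatically $p$-groups, being a subgroup, quotient, and extension of such) into an exact sequence of $\IZ\widehat{_p}$-modules $0 \to C^* \to B^* \to A^* \to 0$ by exactness of $(-)^*$. Since $\IZ\widehat{_p}$ is Noetherian, the finitely generated $\IZ\widehat{_p}$-modules form a Serre subcategory, so $B^*$ is finitely generated if and only if both $C^*$ and $A^*$ are. Combining this with the equivalence (a)$\Leftrightarrow$(c) of part (i) yields exactly the Serre-class assertion.

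The two trivial implications and the dualization bookkeeping are routine; the one point deserving genuine care is item (3), namely the biduality $A \cong (A^*)^\vee$ and the identification of the dual category with compact $\IZ\widehat{_p}$-modules. This is where the topology introduced just before the lemma is actually used, and it is what makes (c)$\Rightarrow$(b) go through, since the naive algebraic double dual $\hom_\IZ(A^*,\IQ/\IZ)$ is far too large. An alternative that avoids the topology altogether is to prove (c)$\Rightarrow$(b) directly from the structure theory of finite-rank abelian $p$-groups: condition (c) forces $A^*/pA^* \cong (A[p])^*$ to be a finite $\IF_p$-vector space, hence $A[p]$ is finite, and a classical theorem (e.g.\ Fuchs, \emph{Infinite Abelian Groups}) then decomposes the finite-rank $p$-group $A$ as a finite direct sum of cyclic and quasicyclic groups, which is precisely form (b).
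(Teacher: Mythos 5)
Your proposal is correct and runs on the same engine as the paper's proof: dualize along $A \mapsto A^* = \hom_{\IZ}(A,\IQ/\IZ)$, use that $(-)^*$ is exact, identify $((\IZ/p^\infty)^r)^*$ with $(\IZ\widehat{_p})^r$, and exploit that finitely generated modules over the Noetherian PID $\IZ\widehat{_p}$ form a Serre class; part (ii) is then handled identically in both arguments. The one substantive difference is exactly the step you flag yourself: the biduality $A \cong \widehat{A^*}$. You import it as the Pontryagin duality theorem for discrete abelian ($p$-)groups, which is legitimate but is a black box. The paper instead proves only what it needs, by hand: it shows $\phi_A \colon A \to \widehat{A^*}$ is injective for every abelian $p$-group and bijective for finite ones, and then, for $A \subseteq B = (\IZ/p^\infty)^r$, establishes bijectivity of $\phi_A$ by a diagram chase in which the nontrivial input is the surjectivity of $\widehat{p^*}$ for the dual sequence $0 \to C^* \to B^* \to A^* \to 0$ --- verified explicitly by putting $p^* \colon C^* \to B^* $ into Smith normal form over $\IZ\widehat{_p}$ and checking that $p^n\cdot\id$ on $\IZ\widehat{_p}$ dualizes to the surjection $p^n\cdot\id$ on $\IZ/p^\infty$. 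Your route also permutes the cycle of implications (you do (b)$\Rightarrow$(a), (a)$\Rightarrow$(c), (c)$\Rightarrow$(b), the paper does (a)$\Rightarrow$(b)$\Rightarrow$(c)$\Rightarrow$(a)), which is immaterial. Your alternative, topology-free proof of (c)$\Rightarrow$(b) via the finiteness of $A[p]$ and Fuchs' theorem that a $p$-group of finite rank is a finite direct sum of cocyclic groups is a genuinely different and valid shortcut that the paper does not take; it trades the compact-group bookkeeping for a citation to the structure theory of infinite abelian groups. Either way the argument is complete.
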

\begin{proof}
Since the $\IZ$-module $\IQ/\IZ$  is divisible and hence injective, an exact sequence
$0 \to A \xrightarrow{i} B \xrightarrow{p}  C \to 0$ of abelian $p$-groups induces
an exact sequence of totally disconnected compact topological groups
$0 \to C^* \xrightarrow{p^*} B^*  \xrightarrow{i^*} A^* \to 0$.

For a topological group $G$ let $\widehat{G}$ be its Pontryagin dual, i.e., the abelian group of
continuous homomorphisms from $G$ to $S^1$. Given an exact sequence
$0 \to G \xrightarrow{j} H \xrightarrow{q}  K \to 0$ of
totally disconnected compact topological Hausdorff groups, we obtain an induced
exact sequence $0 \to \widehat{K} \xrightarrow{\widehat{q}} \widehat{H} \xrightarrow{\widehat{j}}
\widehat{G}$. For an abelian $p$-group $A$ we obtain a canonical homomorphism
$\phi_A \colon A \to \widehat{A^*}$ sending $a$ to the continuous group homomorphism
$\hom_{\IZ}(A,\IQ/\IZ) \to S^1, \hspace{2mm} f \mapsto \exp(2\pi i f(a))$. Thus we obtain for
any short exact sequence $0 \to A \xrightarrow{i} B \xrightarrow{p}  C \to 0$ of abelian $p$-groups
and commutative diagram with exact rows
\[
\xymatrix{
0 \ar[r]
&
A \ar[d]^{\phi_A} \ar[r]^{i}
&
B \ar[d]^{\phi_B} \ar[r]^{p}
&
C \ar[d]^{\phi_C} \ar[r]
&
0
\\
0 \ar[r]
&
\widehat{A^*} \ar[r]^{\widehat{i^*}}
&
\widehat{B^*} \ar[r]^{\widehat{p^*}}
&
\widehat{C^*}
&
}
\]

One easily checks that for a finite abelian $p$-group $A$ the canonical map $\phi_A$ is bijective.
Next we show that for any abelian $p$-group $B$ the map $\phi_B$ is injective.
Consider any element $b \in B$. Let $A \subseteq B$ be the finite cyclic subgroup generated by $b$.
We conclude from the commutative diagram above that $\phi_B(b) = 0$ implies that
$\phi_A(b) = 0 $ and hence $b = 0$.

Now suppose that $0 \to A \xrightarrow{i} B \xrightarrow{p}  C \to 0$
is an exact sequence of abelian $p$-groups.
Recall that $\IZ\widehat{_p}$ is a principal ideal domain. Hence $B^*$ is a finitely generated
$\IZ\widehat{_p}$-module, if and only if both $A^*$ and $C^*$ are finitely generated
$\IZ\widehat{_p}$-modules. This shows that the category of abelian
$p$-groups $B$ for which $B^*$ is a finitely generated
$\IZ\widehat{_p}$-module  is a Serre category.

Let $A$ be an abelian group which can be embedded into
$B = (\IZ/p^{\infty})^r$. Let  $0 \to A \xrightarrow{i} B \xrightarrow{p}  C \to 0$
be the  exact sequence of abelian $p$-groups with
$B = (\IZ/p^{\infty})^r$ and $C = B/A$.  We want to show that $\widehat{p^*}$ is surjective.
Recall that $\IZ\widehat{_p}$ is a principal ideal domain and the prime ideals different from $\{0\}$ and
$\IZ\widehat{_p}$ look like
$(p^n)$ for $n$ running through the positive integers.
We can find isomorphisms of $\IZ\widehat{_p}$-modules $B^* \cong (\IZ\widehat{_p})^a \bigoplus (\IZ\widehat{_p})^b$ and
$C^* \cong (\IZ\widehat{_p})^a$ such that the map $p^*\colon C^* \to B^*$ looks  under these identifications like
\[
(\IZ\widehat{_p})^a \to (\IZ\widehat{_p})^a  \bigoplus (\IZ\widehat{_p})^b, \hspace{4mm}
(x_1, x_2, \ldots , x_a) \mapsto (p^{n_1} x_1, p^{n_2} x_2, \ldots , p^{n_a} x_a, 0, \ldots , 0)
\]
for appropriate positive integers $n_1$, $n_2$, \ldots $n_a$. Hence it suffices to show for each positive integers $n$
that the map $p^n \cdot \id \colon \IZ\widehat{_p} \to \IZ\widehat{_p}$ induces
epimorphism between the Pontryagin duals. This induced map on the Pontryagin duals can be identified with
$p^n \cdot \id \colon \IZ/p^{\infty} \to  \IZ/p^{\infty}$ which is indeed surjective. Hence we obtain
a commutative diagram with exact rows
\[
\xymatrix{
0 \ar[r]
&
A \ar[d]^{\phi_A} \ar[r]^{i}
&
B \ar[d]^{\phi_B} \ar[r]^{p}
&
C \ar[d]^{\phi_C} \ar[r]
&
0
\\
0 \ar[r]
&
\widehat{A^*} \ar[r]^{\widehat{i^*}}
&\widehat{B^*} \ar[r]^{\widehat{q^*}}
& \widehat{C^*} \ar[r]
& 0
}
\]

Since $\phi_B$ is bijective for $B = (\IZ/p^{\infty})^r$ and $\phi_C$ is injective, we conclude that
$\phi_A$ is bijective. Since $A^*$ is a finitely generated $\IZ\widehat{_p}$ module,
$\widehat{A^*}$ and hence $A$ is isomorphic to $(\IZ/p^{\infty})^r \times T$
for some finite abelian $p$-group $T$ and integer $r \ge 0$.

If $A$ is isomorphic to $(\IZ/p^{\infty})^r \times T$
for some finite abelian $p$-group and integer $r \ge 0$, then $A^*$ is a finitely generated
$\IZ\widehat{_p}$-module.

Suppose that $A$ is an abelian $p$-group such that $A^*$ is a finitely generated $\IZ\widehat{_p}$-module,
then $A$ embeds into $\widehat{A^*}$ which is isomorphic to
$\IZ/p^{\infty})^s \times T$ for an integer $s \ge 0$ and
a finite abelian $p$-group $T$. Hence $A$ embeds into
$\IZ/p^{\infty})^r$ for some integer $r \ge 0$. This finishes the proof of
Lemma~\ref{lem:abelian_p-groups_embedding_into_(Z/pinfty)r} and thus of
Theorem~\ref{the:computing_calh_lower_ast(EG_times_G_X)_for_proper_X}.
\end{proof}


\typeout{-----------------------  Section 5  ------------------------}

\section{Universal coefficient theorems for $K$-theory}
\label{sec:Universal_coefficient_theorems_for_K-theory}

A proof of the following Universal Coefficients Theorem can be found
in~\cite{Anderson(1964)} and~\cite[(3.1)]{Yosimura(1975)}, the
homological version then follows from~\cite[Note~9
and~15]{Adams(1969b)}.

\begin{theorem}[Universal Coefficient Theorem for $K$-theory]
\label{the:Universal_coefficient_theorem_for_K-theory}
For any $CW$-complex $X$ there is a short exact sequence
\[
0 \to \ext_{\IZ}(K_{*-1}(X),\IZ) \to K^*(X) \to \hom_\IZ(K_*(X),\IZ) \to 0.
\]
If $X$ is a finite $CW$-complex, there is also the $K$-homological version
\[
0 \to \ext_{\IZ}(K^{*+1}(X),\IZ) \to K_*(X) \to \hom_\IZ(K^*(X),\IZ) \to 0.
\]
\end{theorem}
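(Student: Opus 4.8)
The plan is to deduce both exact sequences from the Anderson self-duality of the periodic $K$-theory spectrum $KU$, which encodes exactly the interplay between $\ext_\IZ(-,\IZ)$ and $\hom_\IZ(-,\IZ)$ appearing in the statement. Recall that, because $\IZ$ has injective dimension one, there is an Anderson dualizing spectrum $I_\IZ$ together with a dual functor $I_\IZ Y := F(Y,I_\IZ)$ on spectra, characterized by natural short exact sequences
\[
0 \to \ext_\IZ(\pi_{n-1}Y,\IZ) \to \pi_{-n}(I_\IZ Y) \to \hom_\IZ(\pi_n Y,\IZ) \to 0
\]
for every spectrum $Y$ and every $n$. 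First I would record the homotopy computation $\pi_*(I_\IZ KU)\cong \pi_*(KU)$: since $\pi_*KU$ is free over $\IZ$ and concentrated in even degrees, the $\ext$-term vanishes and the $\hom$-term contributes a copy of $\IZ$ in each even degree.

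To obtain the cohomological sequence I would apply the defining sequence to $Y = X_+\wedge KU$, whose homotopy is by definition $\pi_n Y = K_n(X)$, producing
\[
0 \to \ext_\IZ(K_{n-1}(X),\IZ) \to \pi_{-n}\bigl(I_\IZ(X_+\wedge KU)\bigr) \to \hom_\IZ(K_n(X),\IZ) \to 0.
\]
The adjunction $I_\IZ(X_+\wedge KU)=F(X_+,I_\IZ KU)$ together with the identification $I_\IZ KU\simeq KU$ rewrites the middle term as $\pi_{-n}F(X_+,KU)=K^n(X)$, which is precisely the first sequence of the theorem. A pleasant feature of this route is that it is valid for arbitrary $CW$-complexes with no $\lim^1$-correction, since $\pi_{-n}F(X_+,KU)=K^n(X)$ is just the Brown-representability definition of $K$-cohomology.

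The real work, and the step I expect to be the main obstacle, is promoting the homotopy isomorphism above to an actual equivalence $I_\IZ KU\simeq KU$, i.e.\ the Anderson self-duality of $K$-theory; this is the substance of the results of Anderson and Yosimura cited before the statement. I would argue as follows: $I_\IZ KU$ is canonically a $KU$-module, being the Anderson dual of the $KU$-module $KU$; the group $\pi_0(I_\IZ KU)=\hom_\IZ(\IZ,\IZ)=\IZ$ carries a canonical generator induced by $\id_\IZ$; this generator defines a $KU$-module map $KU\to I_\IZ KU$ which is an isomorphism on homotopy by the first paragraph, hence an equivalence.

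Finally, for finite $X$ I would derive the $K$-homological sequence from the cohomological one by Spanier--Whitehead duality. Writing $DX$ for the Spanier--Whitehead dual of $X_+$, one has $X_+\wedge KU\simeq F(DX,KU)$ and hence $K_n(X)\cong K^{-n}(DX)$ and $K^n(X)\cong K_{-n}(DX)$, the even Bott shift being invisible by $2$-periodicity. Substituting these identifications into the already-established cohomological sequence for $DX$ in degree $-n$ turns it termwise into
\[
0 \to \ext_\IZ(K^{n+1}(X),\IZ) \to K_n(X) \to \hom_\IZ(K^n(X),\IZ) \to 0,
\]
which is the second sequence; finiteness enters only to guarantee the existence of $DX$.
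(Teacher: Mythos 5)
Your argument is correct, and it is essentially the argument behind the sources the paper itself relies on: the paper gives no proof of this theorem, citing Anderson's notes and Yosimura for the cohomological sequence and Adams for the homological one, and what you have written is precisely a self-contained reconstruction of that route (Anderson self-duality $I_{\IZ}KU \simeq KU$ established by a $KU$-module map detected on homotopy, the defining fiber sequence of $I_{\IZ}$ applied to $X_+\wedge KU$, and Spanier--Whitehead duality for finite $X$). The only point worth making explicit is that $DX$ is a finite spectrum rather than a space, so in the last step you are invoking the cohomological sequence for spectra; this costs nothing, since your Anderson-duality argument applies verbatim to $Y = Z\wedge KU$ for any spectrum $Z$.
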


\begin{corollary} \label{cor:UCT.for.Borel.theories}
For any $G$-$CW$-complex $X$ there is a short exact sequence
\begin{multline*}
0 \to \ext_{\IZ}(K_{*-1}(EG\times_G X),\IZ)
\to K^*(EG\times_G X)
\\
\to \hom_\IZ(K_*(EG \times_G X),\IZ) \to 0.
\end{multline*}
\end{corollary}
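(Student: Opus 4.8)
The plan is to obtain the assertion as a direct application of the cohomological half of Theorem~\ref{the:Universal_coefficient_theorem_for_K-theory}, namely the sequence
\[
0 \to \ext_{\IZ}(K_{*-1}(Y),\IZ) \to K^*(Y) \to \hom_\IZ(K_*(Y),\IZ) \to 0,
\]
which holds for an arbitrary $CW$-complex $Y$ with no finiteness hypotheses. The whole task therefore reduces to exhibiting $Y = EG \times_G X$ as (a space of the homotopy type of) a $CW$-complex; once this is done one simply substitutes $Y = EG \times_G X$ and reads off the claimed sequence, the naturality and the description of the right-hand evaluation map being inherited verbatim from Theorem~\ref{the:Universal_coefficient_theorem_for_K-theory}.

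First I would fix a model for $EG$ that is a free $G$-$CW$-complex; such a model always exists. Then, for the given $G$-$CW$-complex $X$, the product $EG \times X$ equipped with the diagonal $G$-action is again a $G$-$CW$-complex, and it is free because the $G$-action on the first factor is free. Dividing out a free cellular $G$-action yields an honest $CW$-complex, so the Borel construction $EG \times_G X = (EG \times X)/G$ carries a canonical $CW$-structure. With this structure in hand, Theorem~\ref{the:Universal_coefficient_theorem_for_K-theory} applies to $EG \times_G X$ directly.

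There is essentially no serious obstacle here, and that is worth flagging explicitly: unlike the $K$-homological sequence of Theorem~\ref{the:Universal_coefficient_theorem_for_K-theory}, which requires the complex to be finite, the $K$-cohomological sequence is unconditional, so no finiteness or connectivity assumption on $X$ is needed and the corollary holds for \emph{any} $G$-$CW$-complex $X$. The only point deserving a word of care is that one should use the free $G$-$CW$ model of $EG$ precisely so that the quotient is a $CW$-complex on the nose; alternatively, since $K$-theory is homotopy invariant, it suffices to know that $EG \times_G X$ has the homotopy type of a $CW$-complex, which again follows at once from the $G$-$CW$ structures involved.
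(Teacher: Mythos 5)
Your proposal is correct and is exactly the argument the paper intends: the corollary is stated as an immediate consequence of the cohomological half of Theorem~\ref{the:Universal_coefficient_theorem_for_K-theory}, which holds for arbitrary $CW$-complexes, applied to $Y = EG \times_G X$ after observing that the Borel construction of a $G$-$CW$-complex carries a $CW$-structure (or at least the homotopy type of one). Your remark that no finiteness hypothesis is needed, in contrast to the $K$-homological version, is also the correct reading of the theorem.
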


Also the homological version of the Universal Coefficient Theorem has an equivariant counterpart.

\begin{remark}\em
Recall that the Completion Theorem~\ref{the:Completion_Theorem} for
 a finite proper $G$-$CW$-complex $X$ yields an isomorphism
$K^*(EG \times_{G} X) \xrightarrow{\cong} \invlim{n \ge 1}{K^*((EG
\times_{G} X)^n)}$. Hence $K^*(EG \times_{G} X)$ can be regarded as
a pro-discrete group. Thus it carries a topology, the so-called
skeletal topology. In terms of topological abelian groups the main
statement of the Completion Theorem~\ref{the:Completion_Theorem}
says that there is a canonical isomorphism of topological
groups $K^*(EG \times_{G} X) \cong K^*_{G}(X)\widehat{_I}$, where
$K^*_{G}(X)\widehat{_I}$ carries the $I$-adic topology. The exact
sequence introduced in
Remark~\ref{rem:continuous_version_of_the_Cocompletion_Theorem}
then provides an Equivariant Universal Coefficient Theorem
for $K$-homology for finite proper $G$-$CW$-complexes $X$, which
says that the following sequence is exact
\[
0\to {\ext^1_{\cts}(K^{*+1}(EG\times_G X), \IZ )} \to
K_*(EG\times_G X) \to {\hom_{\cts}(K^*(EG\times_G X), \IZ)} \to 0.
\]
\end{remark}

The following lemma will be needed for the proof of Theorem~\ref{the:intro}, which we will give below.

\begin{lemma} \label{lem:comparing_r_p_upper_kand_r_p_upper_for_K-theory}
Suppose that $X$ is a finite proper $G$-$CW$-complex.

Then the two numbers $r_p^k(X)$ and $r^p_{k-1}(X)$ defined in
Theorem~\ref{the:computing_calh_upper_ast(EG_times_G_X)_for_proper_X}~%
\ref{the:computing_calh_upper_ast(EG_times_G_X)_for_proper_X:under_finiteness_conditions} and
Theorem~\ref{the:computing_calh_lower_ast(EG_times_G_X)_for_proper_X}~%
\ref{the:computing_calh_lower_ast(EG_times_G_X)_for_proper_X:under_finiteness_conditions}
 coincide for all primes $p$ and all $k\in \IZ$.
\end{lemma}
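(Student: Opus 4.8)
The plan is to recognize both numbers as invariants of one and the same completed/cocompleted $K$-theory module and to match them through the Pontryagin duality $\IZ\widehat{_p}\leftrightarrow\IZ/p^\infty$ that is already isolated algebraically in Lemma~\ref{lem:abelian_p-groups_embedding_into_(Z/pinfty)r}. First I would unwind the two definitions. By the construction in Theorem~\ref{the:computing_calh_upper_ast(EG_times_G_X)_for_proper_X}~\ref{the:computing_calh_upper_ast(EG_times_G_X)_for_proper_X:under_finiteness_conditions}, the number $r_p^k(X)$ is the free $\IZ\widehat{_p}$-rank of the finitely generated $\IZ\widehat{_p}$-module $K^{k+1}(q(X);\IZ\widehat{_p})$, whereas by Theorem~\ref{the:computing_calh_lower_ast(EG_times_G_X)_for_proper_X}~\ref{the:computing_calh_lower_ast(EG_times_G_X)_for_proper_X:under_finiteness_conditions} together with Lemma~\ref{lem:abelian_p-groups_embedding_into_(Z/pinfty)r} the number $r^p_{k-1}(X)$ is the corank (the number of $\IZ/p^\infty$-summands) of the abelian $p$-group $K_k(q(X);\IZ/p^\infty)$, equivalently the free $\IZ\widehat{_p}$-rank of its dual $K_k(q(X);\IZ/p^\infty)^* = \hom_\IZ(K_k(q(X);\IZ/p^\infty),\IQ/\IZ)$. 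The degrees $k+1$ and $k$ occurring here differ exactly by the shift built into the $\ext^1$-term of a universal coefficient theorem, which is precisely what makes the comparison of $r_p^k$ with $r^p_{k-1}$ (rather than with $r^p_k$) the natural one.

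It therefore suffices to produce, after the prime-by-prime reductions of Lemma~\ref{lem:splitting_calhast(EG_times_G_X)}, an isomorphism up to finite kernel and cokernel
\[
\hom_\IZ\bigl(K_k(q(X);\IZ/p^\infty),\IQ/\IZ\bigr)\;\cong\;K^{k+1}(q(X);\IZ\widehat{_p}),
\]
since a finite discrepancy changes neither the free $\IZ\widehat{_p}$-rank nor the corank. The cleanest route I see bypasses the naive coefficient universal coefficient theorem (which is unavailable here, as $EG\times_G X$ is only of finite type, not finite) and instead plays the Completion Theorem~\ref{the:Completion_Theorem} and the Cocompletion Theorem~\ref{the:Cocompletion_Theorem} against each other. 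Writing $A_n = K^{k+1}_G(X)/I^n\cdot K^{k+1}_G(X)$ for the finitely generated abelian groups appearing there, the Completion Theorem identifies $K^{k+1}(q(X);\IZ\widehat{_p})$ with $\invlim{n}{A_n}$, while the Cocompletion Theorem exhibits the divisible part of $K_k(EG\times_G X)$ as $\colim{n}{\ext^1_\IZ(A_n,\IZ)}=\colim{n}{\tors(A_n)}$, whose $p$-primary corank is $r^p_{k-1}(X)$ after the relative, $p$-local reductions. Thus the whole statement collapses to the purely algebraic assertion that, for a tower $\{A_n\}$ of finitely generated abelian groups with surjective structure maps, the divisible group $\colim{n}{\tors(A_n)}$ has $\IZ/p^\infty$-corank equal to the free $\IZ\widehat{_p}$-rank of $\invlim{n}{A_n}$; this is exactly the Pontryagin-duality bookkeeping packaged in Lemma~\ref{lem:abelian_p-groups_embedding_into_(Z/pinfty)r}, applied degreewise and checked for compatibility with the structure maps.

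The main obstacle, and the point deserving the most care, is that this duality is genuinely a \emph{completion} phenomenon and not a formal consequence of the finite-complex universal coefficient theorem: on a finite skeleton of $EG\times_G X$ the free rank of $K^{k+1}$ and the corank of $K_k(-;\IZ/p^\infty)$ need not agree, being governed by $K^{k+1}$ and $K_k$ whose rational ranks have opposite parity, so the equality only emerges in the limit, where completion turns the accumulating skeletal torsion into $p$-adic free rank on the cohomology side and into divisible corank on the homology side. Controlling this is where I would lean on the Mittag--Leffler statement of Theorem~\ref{the:Completion_Theorem} (to ensure $\higherlim{n}{1}{A_n}=0$, so that $\invlim{n}{A_n}$ really computes $K^{k+1}(q(X);\IZ\widehat{_p})$) and on the exactness of $\colim$ together with the Serre-category property in Lemma~\ref{lem:abelian_p-groups_embedding_into_(Z/pinfty)r} (so that the finite discrepancies are uniformly bounded and vanish upon dualizing). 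As a consistency check one can run the orbit case $X=G/H$ with $H=\IZ/p$, where $r_p^k(\pt)=p-1$ for $k$ even and $0$ for $k$ odd, and verify that the corank of $\widetilde{K}_{k-1}(B\IZ/p;\IZ/p^\infty)$ agrees. Should the duality route prove unwieldy, an alternative is to rerun the equivariant Chern-character computation of Theorem~\ref{the:numbers_r} in the $\IZ/p^\infty$-coefficient homological setting and to observe that reindexing $k-1\mapsto k$ together with $\dim_\IQ H^j=\dim_\IQ H_j$ reproduces the same sum $\sum_{(g)\in\con_p(G)}\sum_{i}\dim_\IQ H^{k+2i}(C_G\langle g\rangle\backslash X^{\langle g\rangle};\IQ)$, which is $r_p^k(X)$.
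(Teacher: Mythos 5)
Your overall instinct --- that $r_p^k(X)$ is the $\IZ\widehat{_p}$-rank of $K^{k+1}(q(X);\IZ\widehat{_p})$, that $r^p_{k-1}(X)$ is the corank of $K_k(q(X);\IZ/p^\infty)$, and that the two should be matched by a universal-coefficient-type duality --- is the right one, but the reduction you actually carry out contains a false step. You claim the lemma ``collapses to the purely algebraic assertion that, for a tower $\{A_n\}$ of finitely generated abelian groups with surjective structure maps, the divisible group $\colim{n}{\tors(A_n)}$ has $\IZ/p^\infty$-corank equal to the free $\IZ\widehat{_p}$-rank of $\invlim{n}{A_n}$.'' This is not true: for the constant tower $A_n=\IZ$ (which is exactly what occurs for $G$ trivial and $X=\pt$) the left-hand corank is $0$ while $\invlim{n}{A_n}=\IZ$ has $p$-adic rank $1$. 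The correct statement carries an extra term, namely the stable free rank of the tower, which in the geometric situation is $\dim_{\IZ}(K^k(G\backslash X))$ and is generally nonzero. This is precisely because the Completion and Cocompletion Theorems compute the \emph{absolute} groups $K^{k}(EG\times_G X)=\invlim{n}{A_n}$ and $K_{k-1}(EG\times_G X)$, not the relative groups $K^{k+1}(q(X);\IZ\widehat{_p})$ and $K_k(q(X);\IZ/p^\infty)$ with which you began; by silently replacing the former by the latter you drop the contribution of $G\backslash X$ on the cohomology side but not on the homology side. (Relatedly, $\colim{n}{\ext^1_{\IZ}(A_n,\IZ)}$ is naturally $\colim{n}{\hom_{\IZ}(\tors(A_n),\IQ/\IZ)}$ and is the torsion subgroup of $K_{k-1}(EG\times_G X)$, not its divisible part --- e.g.\ a constant tower of $\IZ/p$'s gives $\IZ/p$.)

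A second point: your stated reason for abandoning the universal-coefficient route --- that $EG\times_G X$ is only of finite type, not finite --- applies only to the \emph{homological} sequence in Theorem~\ref{the:Universal_coefficient_theorem_for_K-theory}; the cohomological sequence holds for arbitrary $CW$-complexes, and this is exactly what the paper uses (Corollary~\ref{cor:UCT.for.Borel.theories}). The paper's proof introduces the additive invariant $\dim\!\widehat{_p}(A)=\dim_{\IQ\widehat{_p}}(A\widehat{_p}\otimes_{\IZ\widehat{_p}}\IQ\widehat{_p})$, computes $\dim\!\widehat{_p}(K^k(EG\times_G X))$ twice by Euler-characteristic counts --- once from the exact sequence of Theorem~\ref{the:computing_calh_upper_ast(EG_times_G_X)_for_proper_X}~\ref{the:computing_calh_upper_ast(EG_times_G_X)_for_proper_X:under_finiteness_conditions}, giving $r_p^k(X)+\dim_{\IZ}(K^k(G\backslash X))$, and once from Corollary~\ref{cor:UCT.for.Borel.theories} together with the $\ext_{\IZ}$- and $\hom_{\IZ}$-duals of the sequence of Theorem~\ref{the:computing_calh_lower_ast(EG_times_G_X)_for_proper_X}~\ref{the:computing_calh_lower_ast(EG_times_G_X)_for_proper_X:under_finiteness_conditions} and $\ext_{\IZ}(\IZ/p^\infty,\IZ)\cong\IZ\widehat{_p}$, giving $r^p_{k-1}(X)+\dim_{\IZ}(K^k(G\backslash X))$ --- and compares. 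Note that the $\dim_{\IZ}(K^k(G\backslash X))$ term appears on \emph{both} sides there and cancels; your version has it on one side only. Your argument could be repaired by reinstating the stable free rank in the algebraic lemma and identifying it with $\dim_{\IZ}(K^k(G\backslash X))$ via Theorem~\ref{the:computing_calh_upper_ast(EG_times_G_X)_for_proper_X}~\ref{the:computing_calh_upper_ast(EG_times_G_X)_for_proper_X:splitting}, but as written the central reduction is incorrect.
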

\begin{proof}
For an abelian group $A$ let $A\widehat{_p}$ be its $p$-adic completion, i.e.,
the inverse limit $\invlim{n \to \infty}{A/p^nA}$. There is a canonical
$\IZ\widehat{_p}$-module structure on $A\widehat{_p}$. Define
\[
\dim\!\widehat{_p}(A)  :=  \dim_{\IQ\widehat{_p}}
(A\widehat{_p} \otimes_{\IZ\widehat{_p}} \IQ\widehat{_p} ).
\]
One easily checks
\begin{eqnarray}
\dim\!\widehat{_p}(\IZ) & = & 1; \label{dim_p(Z)}\\
\dim\!\widehat{_p}(\IZ\widehat{_p}) & = & 1; \label{dim_p(Z_p)}\\
\dim\!\widehat{_p}(\IZ\widehat{_q}) & = & 0, \hspace{5mm} \text {if } p \not= q; \label{dim_p(Z_q)}\\
\dim\!\widehat{_p}(A) & = & 0, \hspace{5mm} \text {if } A \text { is finite}. \label{dim_p(finite)}
\end{eqnarray}
Next  want to show for an exact sequence of abelian groups
$0 \to A \xrightarrow{i} B \xrightarrow{p} C \to 0$
\begin{eqnarray}
\dim\!\widehat{_p}(B) & = & \dim\!\widehat{_p}(A) + \dim\!\widehat{_p}(C).
\label{Additivity_of_dim_p}
\end{eqnarray}
We obtain an exact sequence
of $\IZ\widehat{_p}$-modules  $0 \to A\widehat{_p} \xrightarrow{i\widehat{_p}}
B\widehat{_p} \xrightarrow{p\widehat{_p}} C\widehat{_p} \to 0$
from~\cite[Corollary 10.3]{Atiyah-McDonald(1969)}. The induced sequence of
$\IQ\widehat{_p}$-modules
$0 \to A\widehat{_p} \otimes_{\IZ\widehat{_p}} \IQ\widehat{_p}
\xrightarrow{i\widehat{_p}  \otimes_{\IZ\widehat{_p}} \id}
B\widehat{_p} \otimes_{\IZ\widehat{_p}} \IQ\widehat{_p}
 \xrightarrow{p\widehat{_p} \otimes_{\IZ\widehat{_p}} \id}
C\widehat{_p}  \otimes_{\IZ\widehat{_p}} \IQ\widehat{_p}\to 0$
is exact. Now~\eqref{Additivity_of_dim_p} follows.
We conclude from~\eqref{dim_p(Z)},~\eqref{dim_p(finite)}
and~\eqref{Additivity_of_dim_p} for any finitely generated abelian group $A$
\begin{eqnarray}
\dim\!\widehat{_p}(A) & = & \dim_{\IZ}(A),
\label{dim_p(A)_for_A_fin.gen.}
\end{eqnarray}
where $\dim_{\IZ}(A)$ is the dimension of the rational vector space $A\otimes_{\IZ}\IQ$.

Let $C_*$ be a finite dimensional chain complex of abelian groups
such that $\dim\!\widehat{_p}(C_k)$ is finite  and
$H_k(C_*)$ is finite for each $k \in \IZ$. Then we conclude from~\eqref{dim_p(finite)}
and~\eqref{Additivity_of_dim_p}
\begin{eqnarray}
\sum_{k \in \IZ} (-1)^k \cdot \dim\!\widehat{_p}(C_k)  =  0.
\label{Euler_formula_for_C_ast}
\end{eqnarray}
Theorem~\ref{the:computing_calh_upper_ast(EG_times_G_X)_for_proper_X}~%
\ref{the:computing_calh_upper_ast(EG_times_G_X)_for_proper_X:under_finiteness_conditions} implies
that there is a $2$-dimensional chain complex
\begin{multline*}
\ldots \to 0 \to K^k(G\backslash X) \to K^k(EG \times_G X) \to
\prod_{p \in \calp(X)} \left(\IZ\widehat{_p}\right)^{r_p^k(X)} \to 0 \to \ldots,
\end{multline*}
whose homology is finite, and that $K^k(G\backslash X)$ is a finitely generated abelian group.
We conclude from~\eqref{dim_p(A)_for_A_fin.gen.} and~\eqref{Euler_formula_for_C_ast} for any prime $p$
\begin{eqnarray}
\dim\!\widehat{_p}(K^k(EG \times_G X)) & = & r_p^k(X) + \dim_{\IZ}(K^k(G\backslash X)),
\label{dim_p(K_upper_k(EG_times_G_X):I}
\end{eqnarray}
where $r^k_p(X)$ is defined to be $0$ for $p \not\in \calp(X)$.

We conclude from Theorem~\ref{the:computing_calh_lower_ast(EG_times_G_X)_for_proper_X}~%
\ref{the:computing_calh_lower_ast(EG_times_G_X)_for_proper_X:under_finiteness_conditions}
that there is a $2$-dimensional chain complex with finite homology
\begin{multline*} \ldots 0 \to \ext_{\IZ}(K_k(G\backslash X),\IZ) \to
\ext_{\IZ}(K_k(EG \times_G X,\IZ ) \\
\to \ext_{\IZ}(\bigoplus_{p \in \calp(X)} (\IZ/{p^\infty})^{r_k^p(X)},\IZ)
\to 0 \to 0 \to \ldots
\end{multline*}
Since $K_k(G\backslash X),\IZ)$ is finitely generated abelian,
$\ext_{\IZ}(K_k(G\backslash X),\IZ)$ is finite. The $\IZ\widehat{_p}$-module
$\ext(\IZ/p^{\infty},\IZ) = \ext(\IZ/p^{\infty},\IZ)\widehat{_p}$ is isomorphic to $\IZ\widehat{_p}$
(see~\cite[Example~3.3.3 on page~73]{Weibel(1994)}). We conclude
from~\eqref{dim_p(finite)} and~\eqref{Euler_formula_for_C_ast} for any prime $p$
\begin{eqnarray}
\dim\!\widehat{_p}(\ext_{\IZ}(K_k(G\backslash X),\IZ)) & = & r_k^p(X).
\label{dim_p(ext(K_k,Z))}
\end{eqnarray}
 Theorem~\ref{the:computing_calh_lower_ast(EG_times_G_X)_for_proper_X}~%
\ref{the:computing_calh_lower_ast(EG_times_G_X)_for_proper_X:under_finiteness_conditions}
implies that the map
\[
\hom_{\IZ}(K_k(q(X)),\IZ) \colon \hom_{\IZ}(K_k(G\backslash X),\IZ) \to
\hom_{\IZ}(K_k(EG \times_G X),\IZ)
\]
is injective and has finite cokernel.
Since $K_k(G\backslash X)$ is a finitely generated abelian group, we conclude
from~\eqref{Additivity_of_dim_p},~\eqref{dim_p(A)_for_A_fin.gen.} and the
Universal Coefficient Theorem for non-equivariant $K$-theory~\ref{the:Universal_coefficient_theorem_for_K-theory}
 applied to $G\backslash X$
\begin{eqnarray}
\dim\!\widehat{_p}(\hom_{\IZ}(K_k(EG \times_G X,\IZ)) & = & \dim_{\IZ}(K_k(G\backslash X))
\nonumber
\\ & = & \dim_{\IZ}(K^k(G\backslash X)).
\label{dim_p(hom(K_k(Gbackslash_X),Z)}
\end{eqnarray}
We conclude from Theorem~\ref{cor:UCT.for.Borel.theories}
and equations~\eqref{Additivity_of_dim_p},~\eqref{dim_p(ext(K_k,Z))} and~\eqref{dim_p(hom(K_k(Gbackslash_X),Z)}
\begin{eqnarray}
\dim\!\widehat{_p}(K^k(EG \times_G X)) & = & r^p_{k-1}(X) + \dim_{\IZ}(K^k(G\backslash X).
\label{dim_p(K_upper_k(EG_times_G_X):II}
\end{eqnarray}
Now~\eqref{dim_p(K_upper_k(EG_times_G_X):I} and~\eqref{dim_p(K_upper_k(EG_times_G_X):II}
imply $r^k_p(X) = r^p_{k-1}(X)$. This finishes the proof of
Lemma~\ref{lem:comparing_r_p_upper_kand_r_p_upper_for_K-theory}.
\end{proof}

\begin{lemma}\label{lem:sing_homology_iso_versus_generalized_(co)homology_iso}
  Let $f \colon X \to Y$ be a map of $CW$-complexes which induces isomorphisms
  $H_n(X;\IZ) \xrightarrow{\cong} H_n(Y;\IZ)$ for all $n \ge 0$. Then for any
  cohomology theory $\calh^*$ and any homology theory $\calh_*$ (satisfying the disjoint union axiom)
  and any $n \in  \IZ$,
  the maps $\calh^n(f) \colon \calh^n(Y) \to \calh^n(X)$ and $\calh_n(f)
  \colon \calh_n(X) \to \calh_n(Y)$ are isomorphisms.
\end{lemma}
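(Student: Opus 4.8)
The plan is to reduce both assertions to a single vanishing statement about an auxiliary space. First I would replace $f$ by the inclusion $j\colon X \hookrightarrow \cyl(f)$ of $X$ into the mapping cylinder. The canonical retraction $r\colon \cyl(f) \to Y$ is a homotopy equivalence and satisfies $r \circ j = f$, so $\calh^n(r)$ and $\calh_n(r)$ are isomorphisms and $\calh^n(f)$, $\calh_n(f)$ are isomorphisms for all $n$ if and only if $\calh^n(j)$, $\calh_n(j)$ are. By the long exact sequences of the pair $(\cyl(f),X)$, the maps $\calh^n(j)$ and $\calh_n(j)$ are isomorphisms for all $n$ exactly when the relative groups $\calh^*(\cyl(f),X)$ and $\calh_*(\cyl(f),X)$ all vanish. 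Writing $Z := \cyl(f)/X$ for the mapping cone and using the quotient/excision axiom for the $CW$-pair $(\cyl(f),X)$, these relative groups are the reduced groups $\widetilde{\calh}^*(Z)$ and $\widetilde{\calh}_*(Z)$. Hence it suffices to prove $\widetilde{\calh}^*(Z) = 0$ and $\widetilde{\calh}_*(Z) = 0$.

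Next I would record that $Z$ is integrally acyclic. From the long exact integral homology sequence of $(\cyl(f),X)$ and the hypothesis that $H_n(f)$ is an isomorphism for all $n$, we obtain $\widetilde{H}_n(Z;\IZ) = H_n(\cyl(f),X;\IZ) = 0$ for all $n$. By the universal coefficient theorems for singular homology and cohomology, integral acyclicity of $Z$ then forces $\widetilde{H}_n(Z;M) = 0$ and $\widetilde{H}^n(Z;M) = 0$ for every abelian group $M$ and every $n$, since the relevant tensor-, Hom-, Tor- and Ext-terms all vanish.

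For the cohomology statement I would simply invoke Lemma~\ref{lem:Hast(X;R)_is_0_implies_calhast(X)_is_0}: taking $M = \calh^l(\pt)$ in the previous paragraph gives $\widetilde{H}^k(Z;\calh^l(\pt)) = 0$ for all $k \ge 0$ and $l \in \IZ$, which is exactly the hypothesis of that lemma, so it yields $\widetilde{\calh}^k(Z) = 0$ for all $k$. For the homology statement I would run the dual argument via the reduced Atiyah--Hirzebruch spectral sequence $E^2_{p,q} = \widetilde{H}_p(Z;\calh_q(\pt)) \Rightarrow \widetilde{\calh}_{p+q}(Z)$, whose $E^2$-page vanishes by the same coefficient computation, so $\widetilde{\calh}_*(Z) = 0$.

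The one point requiring genuine care, and the main obstacle, is the convergence of the homology spectral sequence, since $Z$ need not be finite dimensional. Here the disjoint union axiom is essential: it guarantees that $\calh_*$ commutes with the colimit over the skeleta of $Z$, so the homology Atiyah--Hirzebruch spectral sequence converges strongly and there is no $\lim^1$-obstruction in the colimit direction. This is precisely the homological counterpart of the Milnor-sequence step used in the proof of Lemma~\ref{lem:Hast(X;R)_is_0_implies_calhast(X)_is_0}. With strong convergence and a vanishing $E^2$-page, the target $\widetilde{\calh}_*(Z)$ is forced to vanish, which together with the cohomology case completes the argument.
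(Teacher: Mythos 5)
Your argument is correct, but it follows a genuinely different route from the paper's. You pass to the mapping cylinder, reduce both statements to the vanishing of $\widetilde{\calh}^*$ and $\widetilde{\calh}_*$ on the integrally acyclic mapping cone $Z$, kill the cohomology via Lemma~\ref{lem:Hast(X;R)_is_0_implies_calhast(X)_is_0} and the homology via the Atiyah--Hirzebruch spectral sequence, whose strong convergence you correctly justify by the fact that the disjoint union axiom forces $\calh_*$ to commute with the colimit over skeleta (so the filtration is exhaustive and bounded below, with no $\lim^1$ issue in the homological direction). The paper instead suspends $f$ twice: by the suspension isomorphism for singular homology, $\Sigma^2 f$ is still a homology isomorphism between simply connected $CW$-complexes, hence a homotopy equivalence by Hurewicz and Whitehead, and the claim follows from homotopy invariance together with the suspension axiom for $\calh^*$ and $\calh_*$. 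The trade-off is that the paper's argument is shorter, avoids spectral sequences entirely, and in fact needs no disjoint union axiom, so it proves the statement for arbitrary (co)homology theories; your argument uses the disjoint union axiom in an essential way (both through the Milnor sequence inside Lemma~\ref{lem:Hast(X;R)_is_0_implies_calhast(X)_is_0} and through the colimit-over-skeleta convergence), but this hypothesis is part of the statement, and your reduction to the acyclic cone makes explicit the mechanism --- vanishing of a generalized theory on an integrally acyclic space --- that the paper exploits elsewhere in Section~\ref{sec:Borel_cohomology}.
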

\begin{proof}
Because of the excision and the suspension axiom for the cohomology theory $\calh^*$ if suffices to establish the result for the twofold suspension of $f$. Since the twofold suspension of a $CW$-complex is simply-connected it follows from the Hurewicz theorem that the twofold suspension of $f$ is a homotopy equivalence. The claim then follows from the homotopy invariance axiom for the cohomology theory $\calh^*$.
\end{proof}

Now we can give the proof of Theorem~\ref{the:intro}.

\begin{proof}[Proof of Theorem~\ref{the:intro}]
This follows from~Theorem~\ref{the:computing_calh_upper_ast(EG_times_G_X)_for_proper_X},
Theorem~\ref{the:numbers_r}, Theorem~\ref{the:computing_calh_lower_ast(EG_times_G_X)_for_proper_X},
Lemma~\ref{lem:comparing_r_p_upper_kand_r_p_upper_for_K-theory} and
Lemma~\ref{lem:sing_homology_iso_versus_generalized_(co)homology_iso}, as soon as we have shown
that for every  prime $p$, every  element $g \in G$ of $p$-power order and every $k \in \IZ$ we have an isomorphism
of $\IQ$-modules
\begin{eqnarray}
H^{k}(BC_G\langle g \rangle;\IQ) & \cong  & H^{k}(X^{\langle g \rangle}/C_G\langle g \rangle;\IQ).
\label{to_show}
\end{eqnarray}
We have $\widetilde{H}_k(X;\IZ) = 0$ for all $k \in \IZ$ by assumption.  Hence
we get $\widetilde{H}_k(X;\IF_p) = 0$ for all $k \in \IZ$ , where $\IF_p$ is the
field with $p$ elements.  By Smith theory (see~\cite[Theorem~5.2 in~III.5 on
page~130]{Bredon(1972)}) we conclude that $X^{\langle g   \rangle}$ is non-empty and
$\widetilde{H}_k(X^{\langle g   \rangle};\IF_p) = 0$ for all $k \in \IZ$. Hence
$\widetilde{H}_k(X^{\langle g  \rangle};\IQ) = 0$ for all $k \in \IZ$ and $\langle g \rangle$
is subconjugated to an isotropy group of $X$. Since $X$ is a finite $G$-$CW$-complex,
we conclude that  the sets $\con_p(G)$ and $\calp(G)$ are finite.

Let $Y$ be any proper $C_G\langle g \rangle$-$CW$-complex with
$\widetilde{H}_k(Y;\IQ) = 0$ for all $k \in \IZ$. Choose a $C_G\langle g
\rangle$-map $f \colon Y \to \underline{E}C_G\langle g \rangle$.  The
$\IQ[C_G\langle g \rangle]$-chain map
\[
C_*(f) \otimes_{\IZ} \id_{\IQ} \colon C_*(Y)
\otimes_{\IZ} \IQ \to C_*(\underline{E}C_G\langle g \rangle) \otimes_{\IZ} \IQ
\]
is a $\IQ[C_G\langle g \rangle]$-chain map of projective
$\IQ[C_G\langle g \rangle]$-chain complexes which induces an
isomorphism on homology. Hence it is a $\IQ[C_G\langle g
\rangle]$-chain homotopy equivalence. It induces a chain homotopy equivalence of
$\IQ$-chain complexes $C_*(f) \otimes_{\IZ[G\langle g \rangle]} \IQ$. Hence we
obtain a $\IQ$-isomorphism $H^k(Y/C_G\langle g \rangle;\IQ)
\xrightarrow{\cong} H^k(\underline{E}C_G\langle g \rangle)/C_G\langle g
\rangle;\IQ)$. If we apply this to $Y = X$ and $Y = EC_G\langle g \rangle$, we
get~\eqref{to_show}. This finishes the proof of Theorem~\ref{the:intro}.
\end{proof}

Next we deal with the equivariant universal coefficient theorem.
For a finite proper $G$-$CW$-complex its equivariant $K$-homology
$K^G_k(X)$ can be identified with the expression $K_k(\IC,C_0(X) \rtimes G)$
given by Kasparov's $KK$-theory.
The Kasparov intersection pairing yields for a finite proper
$G$-$CW$-complex a pairing
\begin{eqnarray}\label{special.KK.pairing}
 K^{k}_G(X) \otimes K_{k}^G(X) & \to & KK_0(\IC,\IC)\cong \IZ.
\end{eqnarray}
Taking adjoints gives homomorphisms
\begin{eqnarray}
\label{adjoint.before.UCT.1}
K_G^*(X) & \to & \hom_\IZ(K_*^G(X),\IZ);\\
\label{adjoint.before.UCT.2}
K^G_*(X) & \to & \hom_\IZ(K^*_G(X),\IZ).
\end{eqnarray}

Now we give the proof of the
Equivariant Universal Coefficient Theorem for $K$-theory~\ref{the:Equivariant_Universal_Coefficient_Theorem_for_K-theory}

\begin{proof}[Proof of Theorem~\ref{the:Equivariant_Universal_Coefficient_Theorem_for_K-theory}]
Assume first that $X$ is a proper orbit $G/H$. Green's imprimitivity
theorem \cite[\S 2]{Green(1978)} in this case says that $C_0(G/H)
\rtimes G$ and $\IC H$ are Morita equivalent, and therefore they are
also $KK$-equivalent. Hence $K^*_G(G/H) \cong K_*(C_0(G/H) \rtimes
G) \cong K_*(\IC H)$, and $K_*^G(G/H) \cong KK(C_0(G/H) \rtimes
G,\IC) \cong KK(\IC H,\IC)$. Since $\IC H$ is a finite $C^*$-algebra
$K^*_G(G/H)$ and $K_*^G(G/H)$ are both finitely generated
(projective) $\IZ$-modules it follows by induction over the
dimension and a subinduction over the number of cells of top
dimension that $K^*_G(X)$ and $K_*^G(X)$ are finitely generated for
all finite proper $G$-$CW$-complexes $X$. In particular (\ref{UCT1})
is exact if and only if (\ref{UCT2}) is.

By a result of Rosenberg and Schochet~\cite[Theorem~1.17 and Theorem~7.10]{Rosenberg-Schochet(1987)}
the sequence (\ref{UCT1}) is exact and splits unnaturally if $C_0(X) \rtimes G$
is contained in the category of $C^*$-algebras $\N$, introduced
in~\cite{Rosenberg-Schochet(1987)}. We shall prove that $C_0(X) \rtimes G\in\N$
by induction over the number of cells of $X$.

Since $C_0(G/H) \rtimes G$ is $KK$-equivalent to $\IC H$
and the category $\N$ contains all $C^*$-algebras which are
$KK$-equivalent to finite $C^*$-algebras
(c.f.~\cite[22.3.5 (b)]{Blackadar(1998)}) we conclude that
 the $C^*$-algebras $C_0(G/H) \rtimes G$ for all proper orbits
$G/H$ are in $\N$.  A second property of the category $\N$ is
that it is closed under extensions~\cite[22.3.4~(N3)]{Blackadar(1998)}.
If $X$ is a finite proper $G$-$CW$-complex for which $C_0(X) \rtimes G$ is in $\N$,
and $Y$ is obtained from $X$ by attaching a
proper $n$-dimensional $G$-cell $Z=G/H\times e_n$ we obtain
an exact sequence $0 \to C_0(Z) \to C_0(Y) \to C_0(X)\to  0$.
Taking the crossed product with $G$ we obtain an exact sequence
\[
0 \to C_0(Z)\rtimes G \to C_0(Y)\rtimes G \to C_0(X)\rtimes G\to 0.
\]
The extension property of $\N$ yields that $C_0(Y) \rtimes G$ is in $\N$.
\end{proof}

\begin{remark} \em \label{rem:Boekstedt_universal_coefficient_theorem}
Let $G$ be a finite group. The isomorphism of abelian groups
\[
\mu \colon R(G) \to  \hom_\IZ(R(G),\IZ)
\]
which sends $[V]$ to the map $\mu(V) \colon R(G) \to \IZ, [W]
\mapsto \dim_{\IC}(V \otimes_{\IC} W)^G),$
is the  special case $X = \pt$ of the
Equivariant Universal Coefficient Theorem for $K$-theory~%
\ref{the:Equivariant_Universal_Coefficient_Theorem_for_K-theory} It
is in fact an isomorphism of $R(G)$-modules and we get for any
$R_{\IC}(G)$-module $M$ a natural isomorphism of
$R_{\IC}(G)$-modules $\ext^i_{R_{\IC}(G)}(M,R_{\IC}(G))
\xrightarrow{\cong} \ext^i_{\IZ}(M,\IZ)$ for $i \ge 0$
(see~\cite[2.5 and 2.10]{Madsen(1986)}). Using change of ring
isomorphisms, the Equivariant
Universal Coefficient Theorem for $K$-theory~%
\ref{the:Equivariant_Universal_Coefficient_Theorem_for_K-theory}
is equivalent to the exactness of the short exact sequences
\begin{eqnarray*}
0 \to \ext_{R(G)}(K_{*-1}^G(X),R(G)) \to K^*_G(X) \to \hom_{R(G)}(K_*^G(X),R(G))\to 0,\\
0 \to \ext_{R(G)}(K^{*+1}_G(X),R(G)) \to K_*^G(X) \to \hom_{R(G)}(K^*_G(X),R(G))\to 0.
\end{eqnarray*}
The exactness of these sequences has been proved by B\"okstedt~\cite{Boekstedt(1981)}
using the concept of Anderson
duality. B\"okstedt's technique also can be used to prove the
Equivariant Universal Coefficient Theorem for $K$-theory~%
\ref{the:Equivariant_Universal_Coefficient_Theorem_for_K-theory}
for a finite group $G$.
\em
\end{remark}


\typeout{-----------------------  Section 6  ------------------------}

\section{Examples}
\label{sec:Examples}

\begin{example} \label{exa:SL_3(Z)} \em
Consider the group $G = SL_3(\IZ)$.
We conclude from~\cite[Corollary on page~8]{Soule(1978)}
that for $G = SL_3(\IZ)$ the quotient space $G\backslash\underline{E}G$ is contractible.
Hence the long exact sequence in
Theorem~\ref{the:computing_calh_upper_ast(EG_times_G_X)_for_proper_X}~%
\ref{the:computing_calh_upper_ast(EG_times_G_X)_for_proper_X:long_exact_sequence}
reduces to an isomorphism
\[
\widetilde{\calh}^k(BG) \xrightarrow{\cong} \prod_{p \in \calp(G)}
\widetilde{\calh}^k(BG;\IZ\widehat{_p})
\]
and the one of Theorem~\ref{the:computing_calh_lower_ast(EG_times_G_X)_for_proper_X}~%
\ref{the:computing_calh_lower_ast(EG_times_G_X)_for_proper_X:long_exact_sequence}
to the isomorphism
\[
\bigoplus_{p \in \calp(G)} \widetilde{\calh}_{k+1}(BG;\IZ/{p^\infty})
\xrightarrow{\cong} \widetilde{\calh}_k(BG).
\]
>From the classification of finite subgroups of $SL_3(\IZ)$ we
see that $SL_3(\IZ)$ contains up to conjugacy  four subgroups of order $2$
and two cyclic subgroups of order $3$. The cyclic subgroups of order $3$
have finite normalizers and the action of the normalizer on each of this group is
non-trivial. There are no cyclic subgroups of order $p$ for a prime
$p$ different from $2$ and $3$. Hence we see that $\con_2(G)$ contains four elements
and $\con_3(G)$ contains two elements. The rational homology of all the
centralizers of elements in $\con_2(G)$ and $\con_3(G)$ agree with the one of the trivial group
(see~\cite[Example~6.6]{Adem(1993b)}).
We get in the notation of Theorem~\ref{the:intro} that
$r_2^0(G) = 4$, $r_3^0(G) = 2$, $r_2^1(G) = 0$, $r_3^0(G) = 1$
and $r^k_p(G) = 0$ for $p \not= 2,3$ and all $k$.
We conclude from Theorem~\ref{the:intro} that there is an exact sequence
\[
0 \to \widetilde{K}^0(BG) \to (\IZ\widehat{_2})^4 \oplus (\IZ\widehat{_3})^2 \oplus B_0 \to C_0 \to 0
\]
and an isomorphism
\[
\widetilde{K}^1(BG) \cong D_1
\]
for finite abelian groups $B_0$, $C_0$ and $D_1$ which vanish after applying
$- \otimes_{\IZ} \IZ\left[\frac{1}{6}\right]$. Actually the computation using Brown-Petersen cohomology
and the Conner-Floyd relation in~\cite{Tezuka-Yagita(1992)} show that one can choose the groups
$B_0$, $C_0$ and $D_1$ to be zero.
\em
\end{example}

The next result  shall illustrate that the knowledge of the spaces
$\underline{E}G$ allows to reduce the computation of the (co-)homology of $BG$
to the one of its finite subgroups. Let $G$ be a discrete group.
Let $\calmfin$ be the subfamily of $\calfin$ consisting
of elements in $\calfin$ which are maximal in $\calfin$.
Consider the following assertions concerning $G$:
\begin{itemize}

\item[(M)] Every non-trivial finite subgroup of $G$ is contained in a unique maximal finite subgroup;

\item[(NM)] $M \in \calmfin \to N\!M = M$;

\end{itemize}

For such a group there is a  nice model for $\underline{E}G$ with as few non-free cells as possible.
Let $\{(M_i) \mid i \in I\}$ be the set of conjugacy classes of maximal
finite subgroups of $M_i \subseteq Q$. By attaching free $G$-cells we get
an inclusion of $G$-$CW$-complexes
$j_1 \colon \coprod_{i \in I} G \times_{M_i} EM_i \to EG$,
The  we obtain by~\cite[Corollary~2.11]{Lueck-Weiermann(2007)} a $G$-pushout
\begin{eqnarray}
& \xycomsquareminus{\coprod_{i \in I} G \times_{M_i} EM_i}{j_1}{EG}
{u_1}{f_1}{\coprod_{i \in I} G/M_i}{k_1}{\underline{E}G} &
\label{pushout_for_underline_EG}
\end{eqnarray}
where $u_1$ is the obvious $G$-map obtained by collapsing each $EM_i$ to a point.

Here are some examples of groups $Q$ which satisfy conditions (M) and (NM):
\begin{itemize}

\item Extensions $1 \to \IZ^n \to G \to F \to 1$ for finite $F$ such that the conjugation
  action of $F$ on $\IZ^n$ is free outside $0 \in \IZ^n$. \\[1mm]
The conditions (M), (NM)  are satisfied
by~\cite[Lemma 6.3]{Lueck-Stamm(2000)}. There are models for $\underline{E}G$
whose underlying space is $\IR^n$. The quotient $G\backslash\underline{E}G$ looks like the quotient
of $T^n$ by a finite group.

\item Fuchsian groups $F$ \\[1mm]
The conditions (M), (NM) are satisfied.
(see for instance~\cite[Lemma 4.5]{Lueck-Stamm(2000)}).
In~\cite{Lueck-Stamm(2000)} the larger class of cocompact planar
groups (sometimes also called cocompact NEC-groups) is treated.
The quotients $G\backslash \underline{E}G$ are closed orientable surfaces.

\item One-relator groups $G$\\[1mm]
Let $G$ be a one-relator group. Let $G = \langle (q_i)_{i \in I} \mid r \rangle$
be a presentation with one relation.  We only
have to consider the case, where $Q$ contains torsion.
Let $F$ be the free group with basis $\{q_i \mid i \in I\}$. Then $r$ is an element in
$F$. There exists an element $s \in F$ and an integer $m \ge 2$
such that $r = s^m$, the cyclic  subgroup $C$
generated by the class $\overline{s} \in Q$ represented
by $s$ has order $m$, any finite subgroup of $G$ is subconjugated to $C$ and
for any $q \in Q$ the implication
$q^{-1}Cq \cap C \not= 1 \Rightarrow q \in C$ holds.
These claims follows from~\cite[Propositions~5.17,~5.18 and~5.19 in~II.5 on pages~107 and~108]{Lyndon-Schupp(1977)}.
Hence $Q$ satisfies (M) and (NM). There are explicit two-dimensional models
for $\underline{E}G$ with one $0$-cell $G/C \times D^0 $, as many free $1$-cells $G \times D^1$  as there are elements in $I$ and
one free $2$-cell $G \times D^2$ (see~\cite[Exercise 2 (c) II. 5 on page~44]{Brown(1982)}).
\end{itemize}

\begin{theorem} \label{the:special_case_satisfying_(M)_and_(NM)}
Suppose that the discrete group $G$ satisfies conditions (M) and (NM).
Let $p \colon BG \to G\backslash \underline{E}G$ be the map induced by
the canonical $G$-map $EG \to \underline{E}G$ and $Bj_i \colon BM_i \to BG$ be the map induced by the inclusion
$j_i \colon M_i \to G$.  Then
\begin{enumerate}

\item \label{the:special_case_satisfying_(M)_and_(NM):cohomology}
Let $\calh^*$ be a cohomology theory satisfying the disjoint union axiom.
Then there is a long exact sequence
\begin{multline*}
\ldots \xrightarrow{\prod_{i \in I}  \widetilde{\calh}^{k-1}(Bj_i)}  \prod_{i \in I} \widetilde{\calh}^{k-1}(BM_i)
\xrightarrow{\delta^{k-1}}   \widetilde{\calh}^k(G\backslash \underline{E}G)
\\ \xrightarrow{\widetilde{\calh}^k(p)}  \widetilde{\calh}^k(BG)
\xrightarrow{\prod_{i \in I}  \widetilde{\calh}^k(Bj_i)} \prod_{i \in I}  \widetilde{\calh}^k(BM_i)
\xrightarrow{\delta^k} \widetilde{\calh}^{k+1}(G\backslash \underline{E}G) \to \ldots
\end{multline*}
The map $\widetilde{\calh}^k(p)$ is split injective after applying $- \otimes_{\IZ}
\IZ\left[\frac{1}{\calp(G)}\right]$, provided that $I$ is finite;

\item \label{the:special_case_satisfying_(M)_and_(NM):homology}
Let $\calh_*$ be a homology theory satisfying the disjoint union axiom.
Then there is a long exact sequence
\begin{multline*}
\ldots \xrightarrow{\widetilde{\calh}_{k+1}(p)} \widetilde{\calh}_{k+1}(G\backslash\underline{E}G)
\xrightarrow{\partial_{k+1}}
\bigoplus_{i \in I} \widetilde{\calh}_{k}(BM_i)
\xrightarrow{\bigoplus_{i \in I} \widetilde{\calh}_{k}(Bj_i)} \widetilde{\calh}_k(BG)
\\
\xrightarrow{\widetilde{\calh}_k(p)} \widetilde{\calh}_{k}(G\backslash\underline{E}G)
\xrightarrow{\partial_k}  \bigoplus_{i \in I}  \widetilde{\calh}_{k-1}(BM_i)
\xrightarrow{\bigoplus_{i \in I} \widetilde{\calh}_{k-1}(Bj_i)} \ldots
\end{multline*}
The map $\widetilde{\calh}_k(p)$ is split surjective after applying $- \otimes_{\IZ} \IZ\left[\frac{1}{\calp(G)}\right]$.

\end{enumerate}
\end{theorem}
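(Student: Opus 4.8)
The plan is to feed the $G$-pushout~\eqref{pushout_for_underline_EG} into the two functors $G\backslash(-)$ and $EG\times_G(-)$ and to read the two assertions off the resulting pushouts of spaces. First I would apply $G\backslash(-)$ to~\eqref{pushout_for_underline_EG}. Since $j_1$ is an inclusion of $G$-$CW$-complexes, its quotient is a cofibration and the square stays a pushout, hence a homotopy pushout. Using $G\backslash(G\times_{M_i}EM_i)=BM_i$, $G\backslash EG=BG$, $G\backslash(G/M_i)=\pt$ and the fact that $u_1$ collapses each contractible $EM_i$ to a point, it becomes
\[
\xycomsquareminus{\coprod_{i\in I}BM_i}{\coprod_{i\in I} Bj_i}{BG}{}{p}{\coprod_{i\in I}\pt}{}{G\backslash\underline{E}G}
\]
Applying $\calh^*$ gives the Mayer--Vietoris long exact sequence of this homotopy pushout; splitting off the basepoint summand $\calh^*(\pt)$ from each connected factor, so replacing $\calh^*$ by $\widetilde{\calh}^*$ and cancelling the contributions of $\coprod_{i\in I}\pt$, turns it into precisely the reduced sequence of assertion~\ref{the:special_case_satisfying_(M)_and_(NM):cohomology}, with connecting maps $\delta^k$.

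For the splitting in~\ref{the:special_case_satisfying_(M)_and_(NM):cohomology} I would instead apply $EG\times_G(-)$ to~\eqref{pushout_for_underline_EG}. Now $u_1$ becomes the collapse $EG\times_{M_i}EM_i\to EG/M_i$, which is a homotopy equivalence because $EM_i$ is contractible; hence the opposite leg $f_1$ induces a homotopy equivalence $BG\simeq EG\times_GEG\xrightarrow{\simeq}EG\times_G\underline{E}G$, and naturality of $q(-)$ identifies $p$ with $q(\underline{E}G)\colon EG\times_G\underline{E}G\to G\backslash\underline{E}G$. Since $\calp(\underline{E}G)=\calp(G)$, Theorem~\ref{the:computing_calh_upper_ast(EG_times_G_X)_for_proper_X}~\ref{the:computing_calh_upper_ast(EG_times_G_X)_for_proper_X:splitting} (equivalently the natural retraction of Lemma~\ref{lem:splitting_calhast(q)}) then shows that $\calh^k(q(\underline{E}G))$, and therefore $\widetilde{\calh}^k(p)$, is split injective after inverting $\calp(G)$. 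The hypothesis that $I$ is finite is exactly what brings us within reach of these finiteness-dependent results: conditions~(M) and~(NM) concentrate the entire singular part of $\underline{E}G$ in the finitely many orbits $\coprod_{i\in I}G/M_i$, so the $\calp(G)$-local retraction only has to be built over these finitely many orbit types, even though $\underline{E}G$ itself need not be finite.

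Assertion~\ref{the:special_case_satisfying_(M)_and_(NM):homology} is handled dually. Feeding the same homotopy pushout into the homology theory $\calh_*$ and reducing as before yields the long exact sequence of~\ref{the:special_case_satisfying_(M)_and_(NM):homology}, where the disjoint unions now produce the direct sums $\bigoplus_{i\in I}$. For the splitting of $\widetilde{\calh}_k(p)$ I would again use $p\simeq q(\underline{E}G)$ together with Theorem~\ref{the:computing_calh_lower_ast(EG_times_G_X)_for_proper_X}; its assertion~\ref{the:computing_calh_lower_ast(EG_times_G_X)_for_proper_X:splitting} makes $\calh_k(q(X))$ an isomorphism after inverting $\calp(X)$, so $\widetilde{\calh}_k(p)$ is split surjective after inverting $\calp(G)$. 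Here no finiteness of $I$ is needed, because the homological comparison results hold for arbitrary proper $G$-$CW$-complexes (homology commutes with the relevant colimits).

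The step I expect to be the main obstacle is the splitting in~\ref{the:special_case_satisfying_(M)_and_(NM):cohomology}: matching $p$ with $q(\underline{E}G)$ is clean, but one must carefully justify that the finite-complex hypotheses of Theorem~\ref{the:computing_calh_upper_ast(EG_times_G_X)_for_proper_X}~\ref{the:computing_calh_upper_ast(EG_times_G_X)_for_proper_X:splitting} and Lemma~\ref{lem:splitting_calhast(q)} can be met for the possibly infinite $\underline{E}G$ by exploiting that finiteness of $I$ confines all isotropy to finitely many orbits. The passage from the unreduced Mayer--Vietoris sequences to their reduced forms is routine but requires some bookkeeping with basepoints.
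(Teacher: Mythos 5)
Your argument coincides with the paper's: the long exact sequences are obtained there in exactly the same way, namely as the Mayer--Vietoris sequences of the pushout of spaces gotten by dividing the $G$-action out of~\eqref{pushout_for_underline_EG}, and the splittings are likewise deduced from Lemma~\ref{lem:splitting_calhast(q)} and its homological analogue. The difficulty you flag for the cohomological splitting --- that Lemma~\ref{lem:splitting_calhast(q)} and Theorem~\ref{the:computing_calh_upper_ast(EG_times_G_X)_for_proper_X}~\ref{the:computing_calh_upper_ast(EG_times_G_X)_for_proper_X:splitting} are stated only for finite proper $G$-$CW$-complexes while $\underline{E}G$ need not be finite even when $I$ is --- is left equally unaddressed by the paper, whose entire justification for this point is ``see Lemma~\ref{lem:splitting_calhast(q)}'', so your proposal is no less complete than the original.
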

\begin{proof}
These long exact sequences come from the Mayer-Vietoris sequences
associated to the pushout which is obtained from the
$G$-pushout~\eqref{pushout_for_underline_EG} by dividing out the $G$-action.
For the splitting after applying $- \otimes_{\IZ} \IZ\left[\frac{1}{\calp(G)}\right]$
see Lemma~\ref{lem:splitting_calhast(q)}
and its obvious homological version.
\end{proof}

\begin{example}\label{exa:Fuchsian_groups} \em
Let $F$ be a cocompact Fuchsian group with presentation
\[F= \langle a_1,b_1,\ldots,a_g,b_g,c_1,\ldots,c_t \mid
c_1^{\gamma_1}=\ldots =c_t^{\gamma_t}=c_1^{-1}\cdots c_t^{-1}[a_1,b_1]
\cdots [a_g,b_g] =1 \rangle
\]
for integers $g,t \ge 0$ and $\gamma_i > 1$. Then
$G\backslash \underline{E}G$ is an orientable closed surface $S_g$
of genus $g$. Since $S_g$ is stably a wedge of spheres, we have
\[
\calh^n(S_g) \cong
\calh^n(\pt) \oplus \calh^{n - 1}(\pt)^{2g}\oplus \calh^{n-2}(\pt).
\]
If we suppose that $\calh^n(\pt)$ is torsionfree for all $n \in \IZ$, then we obtain from
Theorem~\ref{the:special_case_satisfying_(M)_and_(NM)}~\ref{the:special_case_satisfying_(M)_and_(NM):cohomology}
for  every $n \in \IZ$ the exact sequence
\[
0 \to \calh^n(S_g) \to \widetilde{\calh}^n(BG)
\to  \prod_{i=1}^t \widetilde{\calh}^n(B\IZ/\gamma_i) \to 0.
\]
\em
\end{example}



\begin{thebibliography}{10}

\bibitem{Abels(1978)}
H.~Abels.
\newblock A universal proper ${G}$-space.
\newblock {\em Math. Z.}, 159(2):143--158, 1978.

\bibitem{Adams(1969b)}
J.~F. Adams.
\newblock Lectures on generalised cohomology.
\newblock In {\em Category Theory, Homology Theory and their Applications, III
  (Battelle Institute Conference, Seattle, Wash., 1968, Vol. Three)}, pages
  1--138. Springer, Berlin, 1969.

\bibitem{Adem(1993b)}
A.~Adem.
\newblock Characters and ${K}$-theory of discrete groups.
\newblock {\em Invent. Math.}, 114(3):489--514, 1993.

\bibitem{Anderson(1964)}
D.~Anderson.
\newblock Universal coefficient theorems for $k$-theory.
\newblock mimeographed notes, Berkeley.

\bibitem{Artin-Mazur(1969)}
M.~Artin and B.~Mazur.
\newblock {\em Etale homotopy}.
\newblock Lecture Notes in Mathematics, No. 100. Springer-Verlag, Berlin, 1969.

\bibitem{Atiyah-McDonald(1969)}
M.~F. Atiyah and I.~G. Macdonald.
\newblock {\em Introduction to commutative algebra}.
\newblock Addison-Wesley Publishing Co., Reading, Mass.-London-Don Mills, Ont.,
  1969.

\bibitem{Atiyah-Segal(1969)}
M.~F. Atiyah and G.~B. Segal.
\newblock Equivariant ${K}$-theory and completion.
\newblock {\em J. Differential Geometry}, 3:1--18, 1969.

\bibitem{Baum-Connes-Higson(1994)}
P.~Baum, A.~Connes, and N.~Higson.
\newblock Classifying space for proper actions and ${K}$-theory of group
  ${C}\sp \ast$-algebras.
\newblock In {\em $C\sp \ast$-algebras: 1943--1993 (San Antonio, TX, 1993)},
  pages 240--291. Amer. Math. Soc., Providence, RI, 1994.

\bibitem{Blackadar(1998)}
B.~Blackadar.
\newblock {\em {$K$}-theory for operator algebras}, volume~5 of {\em
  Mathematical Sciences Research Institute Publications}.
\newblock Cambridge University Press, Cambridge, second edition, 1998.

\bibitem{Boekstedt(1981)}
M.~Boekstedt.
\newblock Universal coefficient theorems for equivariant {$K$}-and
  {$KO$}-theory.
\newblock Aarhus Preprint series, 1981/82 No. 7, 1981.

\bibitem{Borel-Serre(1973)}
A.~Borel and J.-P. Serre.
\newblock Corners and arithmetic groups.
\newblock {\em Comment. Math. Helv.}, 48:436--491, 1973.
\newblock Avec un appendice: Arrondissement des vari\'et\'es \`a coins, par A.
  Douady et L. H\'erault.

\bibitem{Bredon(1972)}
G.~E. Bredon.
\newblock {\em Introduction to compact transformation groups}.
\newblock Academic Press, New York, 1972.
\newblock Pure and Applied Mathematics, Vol. 46.

\bibitem{Brown(1982)}
K.~S. Brown.
\newblock {\em Cohomology of groups}, volume~87 of {\em Graduate Texts in
  Mathematics}.
\newblock Springer-Verlag, New York, 1982.

\bibitem{Green(1978)}
P.~Green.
\newblock {The local structure of twisted covariance algebras.}
\newblock {\em Acta Math.}, 140:191--250, 1978.

\bibitem{Greenlees(1993a)}
J.~P.~C. Greenlees.
\newblock ${K}$-homology of universal spaces and local cohomology of the
  representation ring.
\newblock {\em Topology}, 32(2):295--308, 1993.

\bibitem{Grothendieck(1957)}
A.~Grothendieck.
\newblock {Sur quelques points d'alg\`ebre homologique.}
\newblock {\em Tohoku Math. J., II. Ser.}, 9:119--221, 1957.

\bibitem{Leary-Nucinkis(2001a)}
I.~J. Leary and B.~E.~A. Nucinkis.
\newblock Every {CW}-complex is a classifying space for proper bundles.
\newblock {\em Topology}, 40(3):539--550, 2001.

\bibitem{Lueck(1989)}
W.~L{\"u}ck.
\newblock {\em Transformation groups and algebraic ${K}$-theory}, volume 1408
  of {\em Lecture Notes in Mathematics}.
\newblock Springer-Verlag, Berlin, 1989.

\bibitem{Lueck(2005s)}
W.~L{\"u}ck.
\newblock Survey on classifying spaces for families of subgroups.
\newblock In {\em Infinite groups: geometric, combinatorial and dynamical
  aspects}, volume 248 of {\em Progr. Math.}, pages 269--322. Birkh\"auser,
  Basel, 2005.

\bibitem{Lueck(2007)}
W.~L{\"u}ck.
\newblock Rational computations of the topological {$K$}-theory of classifying
  spaces of discrete groups.
\newblock {\em J. Reine Angew. Math.}, 611:163--187, 2007.

\bibitem{Lueck-Oliver(2001b)}
W.~L{\"u}ck and B.~Oliver.
\newblock Chern characters for the equivariant ${K}$-theory of proper
  ${G}$-{C}{W}-complexes.
\newblock In {\em Cohomological methods in homotopy theory (Bellaterra, 1998)},
  pages 217--247. Birkh\"auser, Basel, 2001.

\bibitem{Lueck-Oliver(2001a)}
W.~L{\"u}ck and B.~Oliver.
\newblock The completion theorem in ${K}$-theory for proper actions of a
  discrete group.
\newblock {\em Topology}, 40(3):585--616, 2001.

\bibitem{Lueck-Stamm(2000)}
W.~L{\"u}ck and R.~Stamm.
\newblock Computations of ${K}$- and ${L}$-theory of cocompact planar groups.
\newblock {\em $K$-Theory}, 21(3):249--292, 2000.

\bibitem{Lueck-Weiermann(2007)}
W.~L\"uck and M.~Weiermann.
\newblock On the classifying space of the family of virtually cyclic subgroups.
\newblock Preprintreihe SFB 478 --- Geometrische Strukturen in der Mathematik,
  Heft 453, M\"unster, arXiv:math.AT/0702646v2, to appear in the Proceedings in
  honour of Farrell and Jones in Pure and Applied Mathematic Quarterly, 2007.

\bibitem{Lyndon-Schupp(1977)}
R.~C. Lyndon and P.~E. Schupp.
\newblock {\em Combinatorial group theory}.
\newblock Springer-Verlag, Berlin, 1977.
\newblock Ergebnisse der Mathematik und ihrer Grenzgebiete, Band 89.

\bibitem{Madsen(1986)}
I.~Madsen.
\newblock Geometric equivariant bordism and {$K$}-theory.
\newblock {\em Topology}, 25(2):217--227, 1986.

\bibitem{Meintrup(2000)}
D.~Meintrup.
\newblock {\em On the Type of the Universal Space for a Family of Subgroups}.
\newblock PhD thesis, Westf\"alische Wilhelms-Universit\"at M\"unster, 2000.

\bibitem{Meintrup-Schick(2002)}
D.~Meintrup and T.~Schick.
\newblock A model for the universal space for proper actions of a hyperbolic
  group.
\newblock {\em New York J. Math.}, 8:1--7 (electronic), 2002.

\bibitem{Mislin(2010)}
G.~Mislin.
\newblock Classifying spaces for proper actions of mapping class groups.
\newblock {\em M\"unster J. of Mathematics}, 3:263--272, 2010.

\bibitem{Phillips(1989)}
N.~C. Phillips.
\newblock {\em Equivariant ${K}$-theory for proper actions}.
\newblock Longman Scientific \& Technical, Harlow, 1989.

\bibitem{Rosenberg-Schochet(1987)}
J.~Rosenberg and C.~Schochet.
\newblock The {K}\"unneth theorem and the universal coefficient theorem for
  {K}asparov's generalized {$K$}-functor.
\newblock {\em Duke Math. J.}, 55(2):431--474, 1987.

\bibitem{Serre(1979)}
J.-P. Serre.
\newblock Arithmetic groups.
\newblock In {\em Homological group theory (Proc. Sympos., Durham, 1977)},
  volume~36 of {\em London Math. Soc. Lecture Note Ser.}, pages 105--136.
  Cambridge Univ. Press, Cambridge, 1979.

\bibitem{Soule(1978)}
C.~Soul{\'e}.
\newblock The cohomology of {${\rm SL}\sb{3}({\bf Z})$}.
\newblock {\em Topology}, 17(1):1--22, 1978.

\bibitem{Switzer(1975)}
R.~M. Switzer.
\newblock {\em Algebraic topology---homotopy and homology}.
\newblock Springer-Verlag, New York, 1975.
\newblock Die Grundlehren der mathematischen Wissenschaften, Band 212.

\bibitem{Tezuka-Yagita(1992)}
M.~Tezuka and N.~Yagita.
\newblock Complex ${K}$-theory of ${B}{\rm {s}{l}}\sb 3({\mathbf {z}})$.
\newblock {\em $K$-Theory}, 6(1):87--95, 1992.

\bibitem{Dieck(1987)}
T.~tom Dieck.
\newblock {\em Transformation groups}.
\newblock Walter de Gruyter \& Co., Berlin, 1987.

\bibitem{Weibel(1994)}
C.~A. Weibel.
\newblock {\em An introduction to homological algebra}.
\newblock Cambridge University Press, Cambridge, 1994.

\bibitem{Whitehead(1978)}
G.~W. Whitehead.
\newblock {\em Elements of homotopy theory}, volume~61 of {\em Graduate Texts
  in Mathematics}.
\newblock Springer-Verlag, New York, 1978.

\bibitem{Yoneda(1960)}
N.~Yoneda.
\newblock {On ext and exact sequences.}
\newblock {\em J. Fac. Sci., Univ. Tokyo, Sect. I}, 8:507--576, 1960.

\bibitem{Yosimura(1975)}
Z.-i. Yosimura.
\newblock Universal coefficient sequences for cohomology theories of {${\rm
  CW}$}-spectra.
\newblock {\em Osaka J. Math.}, 12(2):305--323, 1975.

\end{thebibliography}

\end{document}